\documentclass[12pt,twoside]{article}
\usepackage{amsmath,amsbsy,amsfonts}
\setlength{\textwidth}{7.1in} \setlength{\oddsidemargin}{-.2in}
\setlength{\evensidemargin}{-.2in}
\usepackage[latin1]{inputenc}
\usepackage{amsmath,amsbsy,amsfonts}
\pagestyle{myheadings} \markboth{}{} \pretolerance=10000
\usepackage{color}
\usepackage{tikz}
\newtheorem{theorem}{Theorem}[section]

\newtheorem{definition}{Definition}[section]
\newtheorem{lemma}{Lemma}[section]
\newtheorem{proposition}{Proposition}[section]
\newtheorem{remark}{Remark}[section]

\newtheorem{corollary}{Corollary}[section]
\newcommand{\fim}{\hfill\rule{2mm}{2mm}}
\def\proof{\mbox {\it Proof~}}
\def\nd{\noindent}

\def\r{\mathbb{R}}
\begin{document}
\title{
\vspace{-1.5in} 
\textbf{Equivalent conditions for existence of three solutions for a problem with discontinuous and strongly-singular  terms}}

\author{
{\bf\large Carlos Alberto Santos}\footnote{Carlos Alberto Santos acknowledges
the support of CAPES/Brazil Proc.  $N^o$ $2788/2015-02$.} \\
{\it\small Universidade de Bras\'ilia, Departamento de Matem\'atica}\\
{\it\small   70910-900, Bras\'ilia - DF - Brazil}\\
{\it\small e-mail: csantos@unb.br}\vspace{1mm}\\
{\bf\large Lais Santos}\\
{\it\small Universidade Federal de Vi\c cosa, Departamento de Matem\'atica}\\
{\it\small   36570.000, Vi\c cosa - MG - Brazil}\\
{\it\small e-mails: matmslais@gmail.com}\\\vspace{1mm}
{\bf\large Marcos L. M. Carvalho}\\
{\it\small Universidade Federal de Goi\'as, Instituto de Matem\'atica}\\
{\it\small   74690-900, Goi\^ania - GO - Brazil}\\
{\it\small e-mails: marcos$\_$leandro$\_$carvalho@ufg.br}
}

\date{}
\maketitle
 \vspace{-1,0cm}

\mbox{}

\begin{abstract}
\noindent	In this paper, we are concerned with a Kirchhoff problem in the presence of a strongly-singular term perturbed by a discontinuous nonlinearity of the Heaviside type in the setting of Orlicz-Sobolev space. The presence of both  strongly-singular and non-continuous terms bring up difficulties in associating a differentiable functional  to the problem with finite energy in the whole space $W_0^{1,\Phi}(\Omega)$. To overcome this obstacle, we established an optimal condition for the existence of $W_0^{1,\Phi}(\Omega)$-solutions to a strongly-singular problem, which allows us to constrain the energy functional to a subset of $W_0^{1,\Phi}(\Omega)$ to apply techniques of convex analysis and generalized gradient in Clarke sense.
\end{abstract}

\nd {\it \footnotesize 2010 Mathematics Subject Classifications:} {\scriptsize 35J25, 35J62, 35J75, 35J20, 35D30, 35B38}\\
\nd {\it \footnotesize Key words}: {\scriptsize Non-local Kirchhoff problems, Strongly-singular nonlinearity,  Discontinuous perturbation,  $\Phi$-Laplacian operator. }

\section{Introduction}
In this paper, we are concerned in presenting equivalent conditions for the  existence of three  solutions  for the quasilinear problem 
$$
(Q_{\lambda, \mu})~~\left\{
\begin{array}{l}
-M\left(\displaystyle\int \Phi(|\nabla u|)dx\right)\Delta_\Phi u =\mu b(x)u^{-\delta} + {\lambda }f(x,u) ~ \mbox{in } \Omega,\\
u>0 ~ \mbox{in }\Omega,~~
u=0  ~ \mbox{on }\partial\Omega,
\end{array}
\right.
$$
which are linked to  an optimal compatibility condition between $(b,\delta)$ for  existence of solution to 
the strongly-singular problem 
$$
(S)~~\left\{
\begin{array}{l}
-\Delta_\Phi u =  b(x)u^{-\delta}  ~ \mbox{in } \Omega,\\
u>0 ~ \mbox{in }\Omega,~~
u=0  ~ \mbox{on }\partial\Omega
\end{array}
\right.
$$
 with the boundary condition still in the sense of the trace.
 
Here, $M:[0,\infty) \to [0,\infty) $ is a continuous function,  $ f:\Omega\ \times (0,\infty) \to (0,\infty) $ is of Heaviside type,  $0<b \in L^1(\Omega)$, $\delta>1$, $\lambda, \mu > 0$ are real parameters. Moreover, $-\Delta_\Phi u = -\mbox{div}(a(|\nabla u|)\nabla u)$ stands for the $\Phi$-Laplacian operator, where $a:(0,\infty) \to (0,\infty)$ is a $C^1$-function that defines the increasing  homeomorphism $\phi : \mathbb{R} \to \mathbb{R}$ given by $$\phi(t) = \left\{\begin{array}{l}
a(|t|)t ~~ \mbox{if} ~t \neq 0, \\
0 ~~\mbox{if} ~t = 0,
\end{array}\right.$$ 
whose the associated  N-function  $\Phi : \mathbb{R} \to \mathbb{R}$ is given by $\Phi(t) = \int_0^{|t|}\phi(s)ds$.

The issue about existence of three solutions for a suitable range of parameters $\lambda,\mu>0$, for particular forms of Probem $(Q_{\lambda,\mu})$, has been considered in the literature recently, principally in the context of non-singular problems ($\delta<0$) and in the case in which $f$ is continuous, see for instance \cite{MR2764385},  \cite{MR3646986}, \cite{MR3126590}, \cite{MR2601787}, \cite{MR2819316}, \cite{MR3274355} and references therein.  There are few works for  singular nonlinearities, we quote for example \cite{MR3530211}, \cite{faraci} and \cite{MR2819316} who considered $\Phi(t) = |t|^p/p $, $t>0$,  $1<p <\infty$ and $M\equiv1$ in $(Q_{\lambda,\mu})$.

In \cite{MR2819316}, a singular problem for low dimensions was studied, while in \cite{MR3530211} and \cite{faraci} a singular problem for high dimensions was treated, but in both cases $f$ has been considered a Carath\'eodory function with suitable assumptions. More specifically, in \cite{faraci}, the singular perturbation was considered in the weak sense ($0<\delta <1$), while in \cite{MR3530211} they permitted $\delta>1$ by balancing the size of this $\delta$ with the existence of a $0<u \in C_0^1(\overline{\Omega})$ such that the product $b u^{-\delta}$ in $L^{\left(p^*\right)'}(\Omega)$. 

In this paper, we establish an optimal condition to the relationship between the power $\delta>1$ and the potential $b(x)>0$ to existence of three solutions to
 the singular problem $(Q_{\lambda, \mu})$, independent of the dimension $N$,  in the presence of both a discontinuous nonlinearity of the Heaviside type and a non-local term. More precisely, we prove how the existence of three solutions to $(Q_{\lambda, \mu})$ is associated to the existence of solutions still in $W_0^{1,\Phi}(\Omega)$ to the problem $(S)$. Our approach is based on the existence of positive solution to the problem $(S)$, which provides a non-empty effective domain for the energy functional associated to $(Q_{\lambda, \mu})$ and  enable us to apply techniques of the  generalized gradient in Clarke sense to get a multiplicity result.

Besides this, we prove qualitative results about these three solutions. We highlight how the non-local term $M$ should be  to the discontinuity of the function $f$ be effectively attained by the solutions and how the level set of these solutions behaves exactly at the discontinuity point of $f$. To our knowledge, both the results of equivalent conditions and qualitative information on solutions are new in literature.

As our main results will be obtained via variational methods, we need to introduce the energy functional associated to Problem $(Q_{\lambda, \mu})$. To do this, let us denote by $W_0^{1,\Phi}(\Omega)$ the Orlicz-Sobolev space associated to $\Phi$ and extend the function $f$ to $\mathbb{R}$ as $f(x,t) = 0 $ $a.e$ in $\Omega$ and for all $t \leq 0$. From these, the  functional naturally associated to $(Q_{\lambda,\mu})$ is $I : W_0^{1,\Phi}(\Omega) \to \mathbb{R}$ defined by
	\begin{equation*}\label{min}
	I(u) = \hat{M}\left(\displaystyle\int_\Omega \Phi(|\nabla u|)dx \right) - \lambda \displaystyle\int_\Omega F(x,u)dx + \mu\displaystyle\int_\Omega G(x,u)dx,
	\end{equation*}
	where 
	$$\hat{M}(t) = \displaystyle\int_0^{t}M(s)ds,~~F(x,t):=\int_0^tf(x,s)ds $$ and $G: \Omega \times \mathbb{R} \to (-\infty, \infty]$ is defined by
	$$G(x,t) = \left\{\begin{array}{l}
	\frac{-b(x)t^{1-\delta}}{1-\delta} ~~\mbox{for} ~x \in \Omega ~\mbox{and} ~t > 0,\\
	+\infty ~~\mbox{for} ~x \in \Omega ~ \mbox{and} ~ t \leq 0.
	\end{array}\right.$$
	
To ease our future references, let us  rewrite $I$ as  $I=\Psi_1 + \mu\Psi_2$, where
\begin{equation}\label{t1}
\Psi_1(u) =  \hat{M}\left(\displaystyle\int \Phi(|\nabla u|)dx\right) - \lambda \displaystyle\int_\Omega F(x,u)dx
\end{equation}
and 
\begin{equation}\label{t22}
\Psi_2(u) = \displaystyle\int_\Omega G(x,u)dx.
\end{equation}

The main difficulty in treating strongly-singular problems consists in the fact that the energy functional  associated to the equation neither belongs to  $C^1$, in the sense of Fr\'echet differentiability, nor is defined in the whole space $W_0^{1,\Phi}(\Omega)$. In fact, when $ \delta > 1 $ the functional $\Psi_2$ may not be proper, i.e. it may occur $\Psi_2(u)=\infty$, for all $u \in W_0^{1,\Phi}(\Omega)$. 

Another difficulty exploited in this work is the presence of a more general quasilinear operator, which may be even nonhomogeneous. To deal with this situation, we approach the problem $(Q_{\lambda,\mu})$ in Orlicz-Sobolev space setting. Below, let us state the assumptions about $\Phi$ that we will assume throughout this paper.
\begin{itemize}
	\item[($\phi_0$):] $a \in C^1((0, \infty), (0, \infty))$  and $\phi $ is an increasing odd homeomorphisms from $\mathbb{R} $ onto $\mathbb{R}$; 
	\item[$(\phi_1)$:] $0 < a_- := \displaystyle\inf_{t > 0}\frac{t\phi'(t)}{\phi(t)} \leq \displaystyle\sup_{t > 0}\frac{t\phi'(t)}{\phi(t)} := a_+ < \infty. $	
\end{itemize}

Let us  denote by $\Phi_*$ the function whose inverse is given by $(\Phi_*)^{-1}(t) = \int_0^{t} \Phi^{-1}(s)s^{-1-1/N}ds $, $t >0$. In order to $\Phi_*$  be a N-function, we need to require  \begin{equation*}\label{imersao}
\int_0^{1} \Phi^{-1}(s)s^{-1-1/N}ds < \infty  ~~\mbox{and} ~~  \displaystyle\int_1^{\infty} \Phi^{-1}(s)s^{-1-1/N}ds = \infty.
\end{equation*} 

In this case,  $\Phi_*$ is a N-function given by  $\Phi_*(t) = \int_0^{|t|}\phi_*(s)ds$ for some  increasing odd homeomorphisms $\phi_*: \mathbb{R} \to \mathbb{R}$. About  $\Phi_*$, we will consider 
\begin{itemize}
	\item[($\phi_2$):] $\phi_+ < \phi_-^* :=  \displaystyle\inf_{t > 0}\frac{t\phi_*(t)}{\Phi_*(t)}$, where 
$1 < \phi_- :=a_- +1\leq  a_+ +1 :=\phi_+.$
\end{itemize}

As another consequence of $(\phi_0)$ and $(\phi_1)$, the Orlicz space $L^{\Phi}(\Omega)$ coincides with the set (equivalence classes) of measurable functions $u : \Omega \to \mathbb{R}$ such that
$\int_\Omega \Phi(|u|)dx < \infty$ and it is a Banach space endowed with the Luxemburg norm 
\begin{equation*}\label{norma1}
\|u\|_\Phi := \inf\left\{\alpha > 0 ~: ~ \displaystyle\int_\Omega \Phi\left(\frac{|u(x)|}{\alpha}\right)dx \leq 1\right\}.
\end{equation*}

Associated to the space $L^{\Phi}(\Omega)$,  we can set  the Orlicz-Sobolev space $W^{1,\Phi}(\Omega)$  by 
$$ W^{1,\Phi}(\Omega) = \left\{u \in L^{\Phi}(\Omega)~: ~u_{x_i} \in L^{\Phi}(\Omega), ~i = 1, \cdots, N\right\} $$ and deduce that it is a Banach space  with respect to the norm
\begin{equation*}\label{norma2}
\|u\|_{W^{1,\Phi}} = \|u\|_\Phi + \|\nabla u\|_\Phi.
\end{equation*}
The Orlicz-Sobolev space $W_0^{1,\Phi}(\Omega)$ is naturally defined as the closure of $C_0^{\infty}(\Omega)$ in  $W^{1,\Phi}(\Omega)$-norm, under the hypothesis $(\phi_1)$.  For more information about the Orlicz and Orlicz-Sobolev spaces, we refer \cite{Adams}, \cite{MR0126722} and \cite{MR0482102}.

About $M$, let us assume
\begin{itemize}
	\item[$(M)$:] $M(t) \geq m_0 t^{\alpha -1}$ for all $t \geq 0$ and for some $\alpha > 0$ such that $\Phi_\alpha\prec\prec \Phi_*$, that is, $\displaystyle\lim_{t \to \infty}\frac{\Phi_\alpha(\tau t)}{\Phi_*(t)} = 0$ for all $\tau > 0$, where $\Phi_\alpha(t):=\Phi(t^\alpha)$. 
\end{itemize}

To conclude our assumptions, let us suppose that $ f:\Omega\times (0,\infty)\longrightarrow{\r^+}$ is a measurable function such that $f(x,t)=0$  a.e. in $\Omega\times (-\infty,0]$ and
	\begin{itemize}
	\item[$(f_0)$:]  $f(x,\cdot)\in\mathbf{C}\left({\r}-\{\tilde{a}\}\right) ~\mbox{for some}~\tilde{a}>0,
	~
	-\infty <f(x, \tilde{a}-0) < f(x, \tilde{a}+0) < \infty,~x \in \Omega,$
	
\noindent where	
	$$
	\displaystyle  f(x, \tilde{a}-0) :=\lim_{s  \to \tilde{a}^-} f(x, s ) ,~~    f(x, \tilde{a}+0) :=  \lim_{s  \to \tilde{a}^+} f(x, s ),
	$$
	
	\item[$(f_1)$:] there exists an odd increasing
	homeomorphism $h$ from $\mathbb{R}$ onto $\mathbb{R}$ and nonnegative constants $a_1,~a_2$ and $a_3$ such that
	$$ |\eta|\leq a_1 + a_2\widetilde H^{-1}\circ H(a_3|t|) ~\mbox{for all} ~ \eta\in\partial F(x,t), ~t \in \mathbb{R} ~\mbox{and}  ~ x \in \overline{\Omega}, $$
where 	$H(t) = \int_0^{|t|}h(s)ds$ is a N-function satisfying $\Delta_2$ ($\tilde{H}$ is the its complementary function) such that  $H\prec\prec\Phi_*$ and 
\begin{eqnarray}\label{aga}
	\frac{th(t)}{H(t)} \leq h_+ ~~\mbox{for all} ~t \geq t_0 ~\mbox{with } 1 < h_+ \leq \frac{\phi_-^*}{2} +1,
	\end{eqnarray}
for some $t_0 >0$,	
	{\item[$(f_2)$:] $\displaystyle\lim_{t \to 0^+} \frac{\sup_{\overline{\Omega}} F(x,t)}{t^{\alpha\phi_+}} = 0$, }
	\item[$(f_3)$:] $\displaystyle\lim_{t \to \infty} \frac{\sup_{\overline{\Omega}} F(x,t)}{t^{\alpha\phi_-}} = 0$.
\end{itemize}

Before stating the main results, let us clarify what we mean by a solution of  $(Q_{\lambda, \mu})$.

\begin{definition}\label{D41}
	A function $u \in W_0^{1,\Phi}(\Omega)$ is a solution to the problem $(Q_{\lambda,\mu})$ if $u > 0$ a.e in $\Omega$, $bu^{-\delta}\varphi \in L^1(\Omega)$ and 
\begin{equation*}\label{088}
	  M\left(\displaystyle\int_\Omega \Phi(|\nabla u|)dx \right)\displaystyle\int_\Omega a(|\nabla u|)\nabla u \nabla \varphi dx = \displaystyle\int_\Omega \left[\mu \frac{b(x)}{u^{\delta}} + \lambda f(x,u)\right]\varphi dx~\mbox{
		for all} ~\varphi \in W_0^{1,\Phi}(\Omega).
	\end{equation*}
\end{definition}

Under the hypothesis $(f_0)$, a solution $0<u \in W_0^{1,\Phi}(\Omega)$ of the problem $(Q_{\lambda,\mu})$ has to satisfy 
	\begin{equation}\label{zero}\int_{\Omega} \big( f(x,u(x)-0)- f(x,u(x)+0)\big)u \chi_{\{x \in \Omega ~:~ u(x)=\tilde{a}\}}(x)\varphi dx=0~\mbox{for all } \varphi \in  W_0^{1,\Phi}(\Omega),\end{equation}
	where $\chi_{\{x \in \Omega ~:~ u(x)=\tilde{a}\}}$ stands for the characteristic function of the set $\{x \in \Omega ~:~ u(x)=\tilde{a}\}$. Next, we state that  (\ref{zero}) is satisfied, under additional  assumptions on $f$ and $b$, by showing that $meas\{x \in \Omega ~:~ u(x)=\tilde{a}\}=0$, where $meas$ stands for the Lebesgue measure.

\begin{theorem}\label{med-u=a}
Assume $f$ satisfies  $(f_0)$, $(f_1)$ and $0<b \in L^1(\Omega)$ holds.  If $u\in W^{1,\Phi}_0(\Omega)$ is such that:  
\begin{enumerate}
\item[$i)$]   $u$ is either a local minimum or a local maximum  of $I$, then $meas\{x \in \Omega ~:~ u(x)=\tilde{a}\}=0$,
\item[$ii)$] $u$ is a critical point of $I$ and $b \in L^2_{\mathrm{loc}}(\Omega)$, then $meas\{x \in \Omega ~:~ \vert \nabla u(x) \vert=0\}=0$. In particular,  $meas\{x \in \Omega ~:~ u(x)=c\}=0$ for each $c>0$. 
\end{enumerate}
Moreover, if $u$  satisfies $i)$ or $ii)$ above, then:
\begin{enumerate}
\item[$(iii)$] $u$ is a solution of  Problem $(Q_{\lambda, \mu})$,
\item[$(iv)$] there exists $C>0$ such that $u(x) \geq Cd(x)$ for $x\in \overline{\Omega}$, where $d$ stands for the distance function to the boundary $\partial\Omega$,
\item[$(v)$] $u$ solves $(Q_{\lambda, \mu})$ almost everywhere in $\Omega$ if in addition $bd^{-\delta} \in L^{\tilde{H}}(\Omega)$.
\end{enumerate}
	\end{theorem}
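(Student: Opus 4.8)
The plan is to exploit the splitting $I=\Psi_1+\mu\Psi_2$ by treating $\hat M(\int\Phi(|\nabla u|))-\lambda\int F$ as a locally Lipschitz functional (the nonsmooth part $-\lambda\int F$ via Clarke's gradient $\partial F$, whose fibre at the jump is the segment $[f(x,\tilde a-0),f(x,\tilde a+0)]$) and $\mu\Psi_2$ as a convex, lower semicontinuous, possibly $+\infty$ functional. The one fact I would isolate first is that $\nabla u=0$ a.e. on $A:=\{x\in\Omega:u(x)=\tilde a\}$; together with $\phi(0)=0$ this gives $a(|\nabla u|)\nabla u=0$ a.e. on $A$, while, because $\tilde a>0$, the integrand $G(x,\cdot)$ is smooth at $u(x)=\tilde a$ with derivative $-b(x)\tilde a^{-\delta}$. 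I would also fix, for any prescribed $0<\eta<\tilde a$ and $0\le\zeta\in C_0^\infty(\Omega)$, the test direction $\varphi=(u-\eta)^+\zeta\in W_0^{1,\Phi}(\Omega)$, which is $\ge0$, equals $(\tilde a-\eta)\zeta$ on $A$, and is supported where $u$ is bounded below; along $\pm\varphi$ the singular part $\Psi_2$ is therefore differentiable, so its contribution to the two one-sided derivatives cancels.

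For item (i) with $u$ a local minimum I would compute the right directional derivatives $I'(u;\varphi)$ and $I'(u;-\varphi)$, both of which are $\ge0$. In the sum the Dirichlet flux term $M(\cdot)\int a(|\nabla u|)\nabla u\cdot\nabla\varphi$, the continuous part $-\lambda\int_{\{u\ne\tilde a\}}f(x,u)\varphi$, and the (two-sided) singular term all cancel exactly, leaving only the jump contribution, which evaluates to $-\lambda\int_A\big(f(x,\tilde a+0)-f(x,\tilde a-0)\big)|\varphi|\,dx$. By $(f_0)$ this is $\le0$, and strictly negative once $\int_A|\varphi|>0$, i.e. once $\mathrm{meas}(A)>0$; choosing $\zeta$ with $\int_A\zeta>0$ (possible since $\mathrm{meas}(A)>0$) forces a contradiction, whence $\mathrm{meas}(A)=0$. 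For a local maximum the same combination only yields a sign that is consistent with $\mathrm{meas}(A)>0$, so here I would instead perturb $u$ \emph{downward}, comparing $I(u-s\varphi)$ with $I(u)$ for small $s>0$: on $A$ both $-\lambda\int F$ and $\mu\int G$ increase at rate $s\big(\lambda f(x,\tilde a-0)+\mu b\tilde a^{-\delta}\big)>0$ (strict positivity of the source is what drives this), so $I$ strictly increases, contradicting maximality. \emph{The main obstacle is precisely here}: one must show the Dirichlet energy produced by this downward competitor is negligible against the gain on $A$, which relies on $a(|\nabla u|)\nabla u=0$ on $A$ and a careful coarea/absolute-continuity estimate of $\int_{\{\tilde a-\epsilon<u<\tilde a\}}\phi(|\nabla u|)|\nabla u|\,dx$ as the transition layer shrinks.

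For item (ii) I would use that a critical point satisfies the Euler inclusion $-M(\cdot)\,\mathrm{div}\big(a(|\nabla u|)\nabla u\big)=\mu b u^{-\delta}+\lambda\theta$ weakly, with $\theta(x)$ in the Clarke segment and the whole right-hand side strictly positive a.e. (because $b>0$ and $f\ge0$). Writing $V=a(|\nabla u|)\nabla u$, the hypothesis $b\in L^2_{\mathrm{loc}}(\Omega)$ is what I would use to push the right-hand side into $L^2_{\mathrm{loc}}$ and hence, by Cacciopoli/difference-quotient regularity for the $\Phi$-Laplacian together with $(\phi_1)$, to obtain $V\in W^{1,1}_{\mathrm{loc}}(\Omega)$. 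Then $\nabla V_i=0$ a.e. on $\{V_i=0\}\supseteq\{\nabla u=0\}$, so $\mathrm{div}\,V=0$ a.e. on $\{\nabla u=0\}$; this contradicts $\mathrm{div}\,V<0$ a.e. unless $\mathrm{meas}\{\nabla u=0\}=0$, and since $\nabla u=0$ a.e. on each level set $\{u=c\}$ we conclude $\mathrm{meas}\{u=c\}=0$ for every $c>0$.

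Items (iii)--(v) I would then read off. Once $\mathrm{meas}\{u=\tilde a\}=0$ (from (i) or (ii)), the multivalued term collapses: $\theta(x)=f(x,u(x))$ a.e., so the inclusion becomes exactly the weak identity of Definition~\ref{D41}, giving (iii) (positivity and $bu^{-\delta}\varphi\in L^1$ coming from finite energy and the lower bound below). For (iv) I would compare $u$ with a solution of the purely singular problem $(S)$ — whose existence and boundary behaviour $\gtrsim d$ are available from the optimal compatibility result invoked earlier — using that $-M(\cdot)\Delta_\Phi u\ge \mu b u^{-\delta}>0$ makes $u$ a positive supersolution, and a Hopf/boundary estimate for the $\Phi$-Laplacian to get $u\ge Cd$. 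Finally, for (v), the bound $u\ge Cd$ yields $bu^{-\delta}\le C\,bd^{-\delta}$, so the extra assumption $bd^{-\delta}\in L^{\tilde H}(\Omega)$ places the singular term in $L^{\tilde H}(\Omega)\subset L^1(\Omega)$ in duality with $W_0^{1,\Phi}(\Omega)$; this integrability is exactly what lets me integrate by parts in the weak formulation and conclude that $(Q_{\lambda,\mu})$ holds almost everywhere.
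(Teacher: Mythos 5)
Your treatment of the local-minimum branch of $(i)$, of $(ii)$, and of $(iii)$--$(v)$ follows essentially the paper's own route: two one-sided derivatives along $\pm\varphi$ with $\varphi\ge 0$ supported where $u$ is bounded away from zero, identification of the limits with $f(x,u(x)\pm 0)$ (Lebourg plus dominated convergence under $(f_1)$) and with $-bu^{-\delta}$ for the convex part, and comparison of the two resulting relations on $A:=\{u=\tilde a\}$; for $(ii)$, $L^2_{\mathrm{loc}}$ membership of the right-hand side, second-order regularity of the flux, and strict positivity of $\mu bu^{-\delta}+\lambda\rho$; for $(iv)$ a sub/supersolution comparison with an auxiliary singular problem. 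Two remarks on execution: in $(ii)$ the paper does not simply assert $\mathrm{div}\,V=0$ a.e.\ on $\{\nabla u=0\}$ from $V\in W^{1,2}_{\mathrm{loc}}$ --- it obtains $a(|\nabla u|)|\nabla u|\in W^{1,2}_{\mathrm{loc}}$ from Cianchi--Maz'ya and then runs a truncation argument with the test functions $\frac{a(|\nabla u|)|\nabla u|}{\epsilon+a(|\nabla u|)|\nabla u|}\varphi$ to show $h=0$ a.e.\ where $\nabla u=0$; and the $L^2_{\mathrm{loc}}$ bound on $bu^{-\delta}$ uses the interior lower bound $u\ge Cd$ of Lemma \ref{med-u=a1}-$(iii)$, which must therefore be proved first (the paper's order). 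Your integrated ``sum of the two one-sided derivatives'' formulation of the minimum case is, if anything, cleaner than the paper's, since the Euler term cancels and one does not need the pointwise representation (\ref{2005}).

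The genuine gap is the local-maximum branch of $(i)$, and you have located it yourself but not closed it. For a local maximum both one-sided derivatives are $\le 0$, so their sum is uninformative, and the substitute you propose --- perturb downward by $s\varphi$ and argue that the gain $s(\lambda f(x,\tilde a-0)+\mu b\tilde a^{-\delta})$ on $A$ beats the Dirichlet energy --- cannot work as described. Since $\varphi$ cannot be supported only on the measurable set $A$, the first variation of $I$ along $-\varphi$ equals $-(M(\mathcal{P}(u))\int_\Omega a(|\nabla u|)\nabla u\nabla\varphi\,dx-\lambda\int_\Omega f(x,u-0)\varphi\,dx-\mu\int_\Omega bu^{-\delta}\varphi\,dx)$; if $u$ also satisfies the Euler inclusion with density $\rho\in[f(x,u-0),f(x,u+0)]$, this is $-\lambda\int_\Omega(\rho-f(x,u-0))\varphi\,dx\le 0$, so $I(u-s\varphi)\le I(u)+o(s)$, perfectly consistent with maximality: the ``gain on $A$'' is already paid for by the flux term, and no coarea refinement of the transition layer changes this first-order bookkeeping. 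The paper's mechanism for the minimum case is to upgrade the two one-sided variational inequalities, via the $L^{\tilde H}$-representation of $-M(\mathcal{P}(u))\Delta_\Phi u$ and the constraint $\rho\in[f(x,u-0),f(x,u+0)]$, to the two pointwise identities (\ref{e1}) and (\ref{e2}) and compare them on $A$; it asserts the maximum case is ``similar,'' but note that there the inequalities reverse and no longer pin $\rho$ to both one-sided limits, so some genuinely additional input is required --- your proposal does not supply it, and this branch remains unproved.
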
 

About multiplicity, our main result can be stated as follows.
\begin{theorem}\label{faraci} Assume $\delta >1$, $b \in L^1(\Omega)\cap L^2_{\mathrm{loc}}(\Omega)$, $(\phi_0)-(\phi_2),(f_0)  - (f_3)$ and $(M)$ hold.
	Then, the below claims are equivalents: 
	\begin{itemize}
		\item[$i)$] there exists $0 < u_0 \in W_0^{1,\Phi}(\Omega)$ such that $\displaystyle\int_{\Omega}bu_0^{1-\delta}dx < \infty$,
		\item[$ii)$] the problem $(S)$
		admits a $(unique)$ weak solution $u \in W_0^{1,\Phi}(\Omega)$ such that $u(x) \geq Cd(x)$ for $x\in \overline{\Omega}$ for some $C>0$ independent of $u$,
		\item[$iii)$] for each $\lambda > \lambda^*$,		
		 there exists $\mu_{\lambda} > 0$ such that for  $\mu \in (0, \mu_\lambda]$, the problem $(Q_{\lambda, \mu})$ admits at least three solutions, being two local minima and the other one a mountain pass critical point of the functional $I$, where
\begin{equation}
\label{08}
\!\!\!\!\lambda^* = \displaystyle\inf \left\{ \frac{\hat{M}\left(\displaystyle\int_\Omega\Phi(|\nabla u|)\right)}{\displaystyle\int_\Omega F(x,u)dx} ~: ~ u \in W_0^{1,\Phi}(\Omega)~ \mbox{and} ~\displaystyle\int_\Omega F(x,u)dx > 0 \right\}.
\end{equation}		
	\end{itemize}
Moreover, for each of such solutions the $meas\{x \in \Omega ~:~ u(x)=\tilde{a}\}=0$. Besides this, $u$ solves $(Q_{\lambda, \mu})$ almost everywhere in $\Omega$ if in addition $bd^{-\delta} \in L^{\tilde{H}}(\Omega)$ and if: 
\begin{enumerate}
\item[$iv)$] either $M$ is non-decreasing and $f(x,t)=f(x)$ for all $0<t<1$ and a.e. $x \in \Omega$,
\item[$v)$] or $M$ is such that a Comparison Principle holds  to Problem $(Q_{0,\mu})$ and $\alpha \phi_- >1$,
\end{enumerate}	
then there exists  $\tilde{a}^{\star}>0$ such that  $$meas\{x \in \Omega ~:~ u(x)>\tilde{a}~\mbox{and }u ~\mbox{is a solution of }(Q_{\lambda, \mu})\}>0$$
	for each $0<\tilde{a}<\tilde{a}^{\star}$ given.
\end{theorem}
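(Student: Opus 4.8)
\noindent The plan is to establish the cyclic chain $i)\Rightarrow ii)\Rightarrow iii)\Rightarrow i)$ and then to read off the qualitative addenda from Theorem~\ref{med-u=a} together with a comparison argument. The implications $ii)\Rightarrow i)$ and $iii)\Rightarrow i)$ are immediate: if $u$ is a weak solution of $(S)$, or a solution of $(Q_{\lambda,\mu})$ in the sense of Definition~\ref{D41}, then $u\in W_0^{1,\Phi}(\Omega)$ is itself an admissible test function and testing with $u$ gives $\int_\Omega b u^{1-\delta}\,dx<\infty$, so $u_0=u$ realizes $i)$. For $i)\Rightarrow ii)$ I would minimize the energy of $(S)$, namely $J(u)=\int_\Omega\Phi(|\nabla u|)\,dx+\Psi_2(u)$, over its effective domain $D:=\{u\in W_0^{1,\Phi}(\Omega):\Psi_2(u)<\infty\}$. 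This set is nonempty by hypothesis ($u_0\in D$) and convex, since $G(x,\cdot)$ is convex; as $G\ge 0$ and $u\mapsto\int_\Omega\Phi(|\nabla u|)\,dx$ is coercive by $(\phi_1)$, strictly convex and weakly lower semicontinuous, $J$ attains a unique minimizer $u_S\in D$, and writing the minimality inequality over the convex set $D$ recovers exactly the weak formulation of $(S)$. The lower bound $u_S\ge Cd$ then follows by comparing $u_S$ from below with an explicit subsolution concentrated near $\partial\Omega$, using the weak comparison principle for $-\Delta_\Phi$ available under $(\phi_0)$--$(\phi_1)$.

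The core of the theorem is $ii)\Rightarrow iii)$. Fix $\lambda>\lambda^*$. Since $G\ge 0$ and $u_S\in D$, the effective domain of $I=\Psi_1+\mu\Psi_2$ is nonempty, and I would place $I$ in the framework of nonsmooth critical point theory of Szulkin--Clarke type: $\Psi_1$ is locally Lipschitz, because $t\mapsto\hat M(t)$ is $C^1$ and, by $(f_0)$, $F(x,\cdot)$ is locally Lipschitz with $\partial F$ controlled by $(f_1)$, while $\mu\Psi_2$ is proper, convex and lower semicontinuous. A point $u\in D$ is critical when $\langle\zeta,v-u\rangle+\mu\Psi_2(v)-\mu\Psi_2(u)\ge 0$ for all $v\in W_0^{1,\Phi}(\Omega)$ and some $\zeta\in\partial\Psi_1(u)$, and one checks that these are precisely the solutions of $(Q_{\lambda,\mu})$. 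From $(M)$ and $(\phi_1)$ one obtains $\hat M(\int_\Omega\Phi(|\nabla u|)\,dx)\ge c_1\|u\|^{\alpha\phi_-}$ for $\|u\|$ large, whereas $(f_3)$ yields $\lambda\int_\Omega F(x,u)\,dx=o(\|u\|^{\alpha\phi_-})$; since $\mu\Psi_2\ge 0$, $I$ is coercive. Because $H\prec\prec\Phi_*$ makes $u\mapsto\int_\Omega F(x,u)\,dx$ weakly continuous and $\Psi_2$ is weakly lower semicontinuous, $I$ is sequentially weakly lower semicontinuous, so it attains a global minimum $u_2\in D$. Exploiting $\lambda>\lambda^*$, I choose $w$ with $\int_\Omega F(x,w)\,dx>0$ and $\hat M(\int_\Omega\Phi(|\nabla w|)\,dx)<\lambda\int_\Omega F(x,w)\,dx$, so $\Psi_1(w)<0$; after a small perturbation of $w$ into $D$ (for instance replacing $w$ by $w+\varepsilon u_S$) and taking $\mu$ small, $I(w)<0$, whence $I(u_2)<0$ and $u_2$ is nontrivial.

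I would then produce the first local minimum and the third, mountain-pass solution. By $(f_2)$, for $\|u\|$ small one has $\hat M(\int_\Omega\Phi(|\nabla u|)\,dx)\ge c_2\|u\|^{\alpha\phi_+}$ while $\lambda\int_\Omega F(x,u)\,dx=o(\|u\|^{\alpha\phi_+})$, so shrinking the radius gives $\Psi_1\ge 0$ on a small closed ball $\overline{B_\rho}$; as $\mu\Psi_2\ge 0$ this forces $I\ge 0$ on $\overline{B_\rho}$, with $\beta:=\inf_{\|u\|=\rho}I>0$. On the other hand the solution $w_\mu$ of $(Q_{0,\mu})$ has $\|w_\mu\|\to 0$ and $I(w_\mu)\to 0$ as $\mu\to 0$, so for $\mu$ small $w_\mu$ lies in the interior of $B_\rho$ with $I(w_\mu)<\beta$; hence $\inf_{\overline{B_\rho}}I$ is attained at an interior point $u_1$, a (strict) local minimum with $0\le I(u_1)<\beta$. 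Since $I(u_2)<0$, we get $\|u_2\|>\rho$ and $I(u_1)\ge 0>I(u_2)$, so $u_1\ne u_2$ and both are local minima separated by the barrier $\beta$. Choosing $\mu_\lambda>0$ small and $\mu\in(0,\mu_\lambda]$, I apply the nonsmooth Mountain Pass Theorem for Szulkin functionals to paths joining $u_1$ and $u_2$: the convexity of $D$ keeps the segment between them inside $D$, so the min--max level $c$ is finite, and every such path must cross $\{\|u\|=\rho\}$, giving $c\ge\beta>\max\{I(u_1),I(u_2)\}$. The Cerami condition follows from the coercivity of $I$ (which bounds Cerami sequences) together with the $(S_+)$-property of $-\Delta_\Phi$ and the compactness granted by $H\prec\prec\Phi_*$; thus $c$ is a critical value and yields a third solution $u_3$ of mountain-pass type, distinct from $u_1$ and $u_2$. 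This establishes $iii)$ and closes the equivalence.

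Finally, for the qualitative conclusions, the equality $meas\{x\in\Omega:u(x)=\tilde a\}=0$ for each of $u_1,u_2,u_3$ is exactly Theorem~\ref{med-u=a}: part $i)$ applies to the two local minima and part $ii)$, using $b\in L^2_{\mathrm{loc}}(\Omega)$, to the mountain-pass point; hence (\ref{zero}) holds and each is a genuine solution, while the almost-everywhere solvability when $bd^{-\delta}\in L^{\tilde H}(\Omega)$ is part $(v)$ of the same theorem. For the last assertion I would compare each solution $u$ from below with the solution $w_\mu$ of the purely singular problem $(Q_{0,\mu})$: under $iv)$ the monotonicity of $M$ and the fact that $f(x,t)=f(x)$ on $(0,1)$ give $u\ge w_\mu$, and under $v)$ the assumed comparison principle for $(Q_{0,\mu})$, with $\alpha\phi_->1$ fixing the sub/supersolution ordering, yields the same inequality. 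Since $w_\mu>0$ a.e. in $\Omega$, the superlevel set $\{w_\mu>\tilde a\}$ has positive measure for all sufficiently small $\tilde a$, so there is $\tilde a^\star>0$ with $meas\{x\in\Omega:u(x)>\tilde a\}>0$ whenever $0<\tilde a<\tilde a^\star$. I expect the main obstacle to be the entire third paragraph, that is, verifying the Cerami condition and the two-well geometry simultaneously for the nonsmooth, strongly-singular functional: one must keep the min--max paths inside the effective domain $D$ of $\Psi_2$ while controlling the Clarke gradient of the discontinuous term $F$.
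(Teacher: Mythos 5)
Your overall architecture (the cycle $i)\Rightarrow ii)\Rightarrow iii)\Rightarrow i)$ plus Theorem~\ref{med-u=a} for the qualitative addenda) matches the paper, and two of your choices are legitimate variants rather than errors: for $i)\Rightarrow ii)$ you minimize the convex energy $\mathcal{P}+\Psi_2$ directly over its effective domain, whereas the paper runs Ekeland's principle on the Nehari-type set $\mathcal{N}$ following Sun; and for the two local minima you minimize $I$ on a small closed ball and separate it from the global minimum by the barrier $\inf_{\|u\|=\rho}\Psi_1>0$, whereas the paper passes through Ricceri's strong-to-weak local minimum theorem (Proposition~\ref{2min1}) and the Faraci--Smyrlis construction of two disjoint neighborhoods. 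Both of your routes can be made to work; note only that the interior point of the ball with energy below the barrier is more safely taken as $t\,u_S$ with $t$ fixed small and then $\mu$ small (your $w_\mu$ of $(Q_{0,\mu})$ is an unproven auxiliary object), and that identifying the minimizer of the convex energy with a genuine weak solution of $(S)$ requires the $(u+\epsilon\varphi)^+$ truncation argument of Lemma~\ref{med-u=a1}$(ii)$, which you gloss over but which is available in the paper.

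The genuine gap is in your treatment of case $iv)$ of the final assertion. You claim that ``the monotonicity of $M$ and the fact that $f(x,t)=f(x)$ on $(0,1)$ give $u\ge w_\mu$,'' i.e.\ a comparison with the solution of $(Q_{0,\mu})$. Under $iv)$ no comparison principle is assumed, and $M$ non-decreasing does not yield one for the non-local operator: from $-M(\mathcal{P}(u))\Delta_\Phi u\ge -M(\mathcal{P}(w))\Delta_\Phi w$ one gets no sign information on $u-w$, because the two constants $M(\mathcal{P}(u))$ and $M(\mathcal{P}(w))$ are a priori unrelated --- this failure is precisely why case $v)$ has to \emph{assume} a comparison principle. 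The paper's argument for $iv)$ is entirely different: if for every small $a>0$ there were a solution $u_a\le a$ a.e., then $f(x,u_a)=f(x)$, and since $M$ non-decreasing makes $\hat M\circ\mathcal{P}$ convex, the uniqueness Lemma~\ref{l33} forces all the $u_a$ to coincide; testing the equation with $u_a^{\beta}$, $\beta>\delta$, then bounds the fixed positive quantity $\beta M(\mathcal{P}(u_a))\int_\Omega a(|\nabla u_a|)|\nabla u_a|^2u_a^{\beta-1}dx$ by $|b|_1a^{\beta-\delta}+C|\Omega|\bigl(1+\widetilde H^{-1}\circ H(a)\bigr)a^{\beta}\to0$ as $a\to0$, a contradiction. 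Relatedly, even in case $v)$ the paper compares with the \emph{non-singular} problem $-M(\cdot)\Delta_\Phi v=\lambda b$, whose solvability is exactly what the hypothesis $\alpha\phi_->1$ (coercivity) is there to guarantee; your comparison object solves a singular Kirchhoff problem whose existence is not established. You should replace the comparison argument in $iv)$ by the uniqueness-plus-test-function scheme above.
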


\begin{remark} About the above theorem, we still highlight the following facts:
\begin{enumerate}
\item[$(i)$] the equivalency between $(i)$ and $(ii)$ holds true without assuming $b \in L^2_{\mathrm{loc}}(\Omega)$,
\item[$(ii)$] each one of such solutions given by $iii)$ is such that $u(x) \geq Cd(x)$ for $x\in \overline{\Omega}$, for some $C>0$ dependent on $u$.
\end{enumerate}
\end{remark}

In \cite{MR1037213}, {Lazer and Mckenna} has proven that problem $(S)$ admits solution still in $H_0^{1}(\Omega)$ if, and only if, $\delta < 3$ when $0 < b_0 \leq b \in L^{\infty}(\Omega)$ and $\Phi(t) = |t|^2/2 $ in $(S)$.  Mohammed, in \cite{MR2499900}, considered $\Phi(t) = |t|^p/p $ $(p > 1)$ in $(S)$ and proved that the sharp power in this case is given by $(2p-1)/(p-1). $ As a consequence of Theorem \ref{faraci}, we are able to find a $\delta_q> 1$ such that the problem $(S)$ still admits a solution in  $W_0^{1,\Phi}(\Omega)$ for all $  \delta <\delta_q $, where  $\delta_q$ depends on the summability $L^{q}(\Omega) $ of $b$. This is the content of the next corollary.

\begin{corollary}\label{est} Assume $(\phi_0), (\phi_1)$ and $(\phi_2)$ hold. If 
	$0<b \in L^q(\Omega)$ for some $q>1$ and 
	\begin{equation*}\label{otimo1}
	1 < \delta < \frac{q(2\phi_+ -1) -\phi_+}{q(\phi_+ -1)} := \delta_q,
	\end{equation*} 
	then the problem $(S)$
		admits (unique) weak solution.
\end{corollary}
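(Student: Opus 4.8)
The plan is to reduce the statement to condition $(i)$ of Theorem~\ref{faraci} and then invoke the equivalence $(i)\Leftrightarrow(ii)$ which, by the Remark following that theorem, holds under $(\phi_0)$--$(\phi_2)$ alone, i.e. without the hypothesis $b\in L^2_{\mathrm{loc}}(\Omega)$. Since problem $(S)$ involves neither $f$ nor $M$, the assumptions $(f_0)$--$(f_3)$ and $(M)$ play no role here, which is why the corollary only invokes $(\phi_0),(\phi_1),(\phi_2)$. As $\Omega$ is bounded, $b\in L^q(\Omega)$ with $q>1$ already gives $0<b\in L^1(\Omega)$, so it remains only to exhibit some $0<u_0\in W_0^{1,\Phi}(\Omega)$ with $\int_\Omega b\,u_0^{1-\delta}\,dx<\infty$.

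As a candidate I would take $u_0$ comparable to a power of the distance to the boundary: $u_0=d^\beta$ in a tubular neighborhood of $\partial\Omega$, glued smoothly to a positive constant in the interior (where $d$ is bounded away from $0$), with $\beta\in(0,1)$ to be fixed. Such a $u_0$ is positive, vanishes on $\partial\Omega$, and satisfies $|\nabla u_0|\sim\beta\,d^{\beta-1}$ near the boundary. Membership in $W_0^{1,\Phi}(\Omega)$ and finiteness of the singular integral then impose two competing constraints on $\beta$. For the former I would use the standard growth estimate following from $(\phi_1)$, namely $\Phi(t)\le \Phi(1)\,t^{\phi_+}$ for $t\ge1$ (with $\phi_+=a_++1$), which reduces $\int_\Omega\Phi(|\nabla u_0|)\,dx<\infty$ to $\int_\Omega d^{(\beta-1)\phi_+}\,dx<\infty$, hence to $\beta>1-1/\phi_+$. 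For the latter I would apply H\"older with exponents $q$ and $q'=q/(q-1)$:
\[
\int_\Omega b\,u_0^{1-\delta}\,dx \;\le\; \|b\|_{L^q}\Big(\int_\Omega d^{\beta(1-\delta)q'}\,dx\Big)^{1/q'},
\]
which is finite provided $\beta(\delta-1)q'<1$, i.e. $\beta<\tfrac{q-1}{q(\delta-1)}$.

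It then suffices to check the two constraints are compatible. The admissible interval $\big(1-1/\phi_+,\ \tfrac{q-1}{q(\delta-1)}\big)$ is nonempty exactly when $1-1/\phi_+<\tfrac{q-1}{q(\delta-1)}$, and a direct rearrangement (cross-multiplying the positive quantities $\phi_+-1$, $q$, $\delta-1$) shows this is equivalent to $\delta<\tfrac{q(2\phi_+-1)-\phi_+}{q(\phi_+-1)}=\delta_q$. Thus, for every $\delta$ in the stated range one may choose an admissible $\beta$, obtaining $0<u_0\in W_0^{1,\Phi}(\Omega)$ with $\int_\Omega b\,u_0^{1-\delta}\,dx<\infty$, which is precisely condition $(i)$. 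The equivalence $(i)\Leftrightarrow(ii)$ then delivers the (unique) weak solution of $(S)$.

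The main obstacle I anticipate is the careful verification that the glued function $u_0=d^\beta$ genuinely lies in $W_0^{1,\Phi}(\Omega)$, and not merely in $W^{1,\Phi}(\Omega)$: one must confirm that its distributional gradient is the expected $\beta\,d^{\beta-1}\nabla d$ and that its trace vanishes, so that it sits in the closure $W_0^{1,\Phi}(\Omega)$ of $C_0^\infty(\Omega)$. This relies on $C^2$ regularity of $d$ in a tubular neighborhood of $\partial\Omega$ and on the $\Delta_2$-property of $\Phi$ (guaranteed by $(\phi_1)$) to push the Luxemburg-norm bounds through a cutoff approximation. Once the candidate is correctly placed in the space, the optimization of $\beta$ and the matching of the two growth bounds is the computational heart but is otherwise routine.
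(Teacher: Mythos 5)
Your proposal is correct and follows essentially the same route as the paper: both reduce the corollary to condition $(i)$ of Theorem \ref{faraci} (valid without $b\in L^2_{\mathrm{loc}}(\Omega)$ for the equivalence with $(ii)$) and exhibit the same candidate $u_0\sim d^\theta$ near $\partial\Omega$, with the two constraints $\theta>1-1/\phi_+$ (membership in $W_0^{1,\Phi}(\Omega)$ via the $t^{\phi_+}$ growth bound) and $\theta<\frac{q-1}{q(\delta-1)}$ (H\"older with exponents $q,q'$), whose compatibility is exactly $\delta<\delta_q$. The only cosmetic difference is your notation $\beta$ versus the paper's $\theta$ and a slightly more explicit discussion of the gluing.
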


  Although no answer about $\delta_q >1$ be the sharp power for the existence of solution still in $W_0^{1,\Phi}(\Omega)$ has been provided, we observe that $\delta_q \to ({2\phi_+ - 1})/({\phi_+ -1})$ as $q \to \infty$ and this limit is  the sharp value obtained both by \cite{MR1037213} and \cite{MR2499900} for the cases $\Phi(t) = |t|^2/2$ and $\Phi(t) = |t|^p/p ~(p > 1)$, respectively.
\medskip

In particular, as a consequence of Theorem \ref{faraci}  and Corollary \ref{est}, we have the following.

\begin{corollary}\label{carlos} Assume $(\phi_0), (\phi_1), (\phi_2), (M)$ and $(f_0)-(f_3)$ hold. If $b \in L^q(\Omega)$ for some $q>1$ and $1<\delta < \delta_q$, then for each $\lambda > \lambda^*$ given, there exists $\mu_{\lambda} > 0$ such that for $\mu \in (0, \mu_\lambda]$ the problem $(Q_{\lambda, \mu})$ admits at least three weak solutions with the same properties as those found in item$-iii)$ in Theorem $\ref{faraci}$. 
\end{corollary}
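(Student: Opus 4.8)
The plan is to obtain Corollary \ref{carlos} by a direct composition of the two results already in hand, Corollary \ref{est} and Theorem \ref{faraci}; no new analytic machinery is needed. First I would record that the structural hypotheses $(\phi_0),(\phi_1),(\phi_2)$ are common to both statements and may be used freely, and that since $\Omega$ is bounded one has $L^q(\Omega)\subset L^1(\Omega)$ for every $q>1$. Thus the assumption $b\in L^q(\Omega)$ already supplies the integrability $b\in L^1(\Omega)$ required to even state Theorem \ref{faraci}.

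Next I would invoke Corollary \ref{est}: under $(\phi_0),(\phi_1),(\phi_2)$, $0<b\in L^q(\Omega)$ and $1<\delta<\delta_q$, problem $(S)$ admits a unique weak solution in $W_0^{1,\Phi}(\Omega)$. Its proof proceeds by exhibiting $0<u_0\in W_0^{1,\Phi}(\Omega)$ with $\int_\Omega b u_0^{1-\delta}\,dx<\infty$, i.e.\ by verifying item $i)$ of Theorem \ref{faraci}; the finiteness of this integral is precisely where the threshold $\delta_q$ is calibrated against the $L^q$-norm of $b$. The equivalence $i)\Leftrightarrow ii)$ then upgrades this to item $ii)$, including the lower bound $u(x)\ge Cd(x)$, with $C$ independent of the solution by uniqueness.

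Having certified items $i)$ and $ii)$ of Theorem \ref{faraci}, I would feed this into the implication $ii)\Rightarrow iii)$. That implication is exactly where the additional hypotheses $(M)$ and $(f_0)-(f_3)$ carried by Corollary \ref{carlos} are consumed, so every ingredient of Theorem \ref{faraci} is in place. Its conclusion $iii)$ then yields, for each $\lambda>\lambda^*$, a threshold $\mu_\lambda>0$ such that $(Q_{\lambda,\mu})$ possesses at least three solutions for $\mu\in(0,\mu_\lambda]$, two of them local minima and the third a mountain-pass critical point of $I$, which is verbatim the asserted conclusion.

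The step I expect to need genuine care is the transfer of the local summability $b\in L^2_{\mathrm{loc}}(\Omega)$ presupposed by Theorem \ref{faraci}. When $q\ge 2$ this is immediate from $L^q(\Omega)\subset L^2_{\mathrm{loc}}(\Omega)$, but for $1<q<2$ it is not automatic, and this is exactly the hypothesis used through part $ii)$ of Theorem \ref{med-u=a} to force $meas\{x\in\Omega:\,u(x)=\tilde{a}\}=0$ and thereby certify that the mountain-pass critical point genuinely solves $(Q_{\lambda,\mu})$. For the two local minima the same conclusion comes free from part $i)$ of Theorem \ref{med-u=a}, which does not call on $L^2_{\mathrm{loc}}(\Omega)$; so the delicate point is to confirm that, throughout the regime $1<\delta<\delta_q$, the mountain-pass solution still meets this discontinuity-resolving condition.
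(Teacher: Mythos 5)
Your proposal is correct and follows exactly the route the paper intends: Corollary \ref{carlos} is stated there as an immediate composition of Corollary \ref{est} (which verifies item $i)$ of Theorem \ref{faraci} by constructing $u_0$ with $\int_\Omega b\,u_0^{1-\delta}dx<\infty$) with the chain $i)\Rightarrow ii)\Rightarrow iii)$ of Theorem \ref{faraci}, and the paper gives no further argument. The one point of genuine care you flag -- that Theorem \ref{faraci} assumes $b\in L^1(\Omega)\cap L^2_{\mathrm{loc}}(\Omega)$ while Corollary \ref{carlos} only supplies $b\in L^q(\Omega)$ with $q>1$, which fails to give $L^2_{\mathrm{loc}}$ when $1<q<2$ -- is a real observation, but it exposes an imprecision in the paper's own statement of the corollary rather than a defect in your argument: for $q\ge 2$ the transfer is automatic, and for $1<q<2$ one should either add $b\in L^2_{\mathrm{loc}}(\Omega)$ to the hypotheses or note that only the two local minima are certified as solutions via Theorem \ref{med-u=a}$-i)$ without that assumption.
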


It is worth mentioning that the above theorems improve or complement the related results in the literature both by the presence of the Kirchhoff term, by the summability assumption on  the potential $b$, the strongly-singular term and  the non-homogeneity of the operator. Our results contribute to the literature principally by:
\begin{itemize}
\item[$i)$] Theorem \ref{med-u=a} unify some results on $\Delta_p$-Laplacian operator, with $1<p<\infty$, to $\Phi$-Laplacian operator, see for instance \cite{arcoya} and \cite{Lou}.
	\item[$ii)$]  Theorem \ref{faraci} establishes  necessary and sufficient conditions for existence of multiple solutions for the problem $(Q_{\lambda,\mu})$, by connecting and extending the principal result in Yijing \cite{MR3134198} to a non-homogeneous operator;
	\item[$iii)$]  Theorem \ref{faraci} extends the principal result in Faraci et.al \cite{MR3530211} and complements the main result in \cite{faraci}, principally by considering a non-homogeneous operator, an optimal condition on the pair  $(b,\delta)$ to existence of three solutions, a discontinuity of the Heaviside type and including a Kirchhoff term;
	\item[$iv)$] Corollary \ref{est} gives us an explicit  range of variation of $\delta$, in which the existence of  solution in $W_0^{1,\Phi}(\Omega)$ for $(S)$ is still guaranteed. In particular, when $\Phi(t) = |t|^p/p$ and $ b_0 \leq b(x) \in L^{\infty}(\Omega)$ for some constant $b_0 > 0$, the value $\delta_q$ coincides with the sharp values obtained in  \cite{MR1009077} and \cite{MR1037213};
	\item[$v)$] Corollary \ref{carlos} complements the principal result in \cite{MR3530211} by showing an explicit  variation to $\delta$, where the multiplicity is still ensured, namely, 
	$$0 < \delta <  \frac{p(N-1)}{N(p-1)} = \delta_{(p^*)'},$$
\end{itemize}

To ease the reading, from now on let us assume the assumptions ($\phi_0$), ($\phi_1$), ($\phi_2$), $(M)$ and gather below some functional that appear throughout the paper. 
\begin{itemize}
\item $\hat{M}(t) = \displaystyle\int_0^{t}M(s)ds$, $t\in \mathbb{R}$,
\item $\Psi_1(u) =  \hat{M}\left(\displaystyle\int_\Omega \Phi(|\nabla u|)dx \right) - \lambda \displaystyle\int_\Omega F(x,u)dx$,
\item $\Psi_2(u) = \displaystyle\int_\Omega G(x,u)dx$,
\item $\mathcal{P}(u)=\displaystyle\int_{\Omega}\Phi(|\nabla u|)dx$,
\item $J_1(u) := \left(\hat{M}\circ \mathcal{P}\right)(u)= \hat{M}\left( \displaystyle\int_{\Omega}\Phi(|\nabla u|)dx\right)$,
\item $J_2(u) = \displaystyle\int_\Omega F(x,u)dx,$ 
\item $I=\Psi_1 + \mu\Psi_2=J_1 - \lambda J_2 + \mu \Psi_2$,
\item $ -\left(M\circ \mathcal{P}\right)(\cdot)\Delta_\Phi (\cdot) : W^{1,\Phi}_0(\Omega) \to \left( W^{1,\Phi}_0(\Omega)  \right)^{\prime}$
is understood as 
$$\langle -\left(M\circ \mathcal{P}\right)(u)\Delta_\Phi u,\varphi\rangle:=\left(M\circ \mathcal{P}\right)(u)\int_{\Omega} a(|\nabla u|)\nabla u\nabla \varphi dx, ~ \forall ~\varphi\in W^{1,\Phi}_0(\Omega).$$

\end{itemize}

This paper is organized as follows. In Section 2, we present some preliminary knowledge on the Orlicz-Sobolev spaces and some results of non-smooth analysis related to our problem. The section 3 is reserved to prove Theorem \ref{med-u=a}, while in Section 4 we prove Theorem \ref{faraci}.

\section{Non-smooth analysis for locally Lipschitz functional}

In this section, we are going to remember some facts related to non-smooth analysis. However, one of the principal contribution of this section is establishing appropriated assumptions  under the N-function $\Phi$, the non-local term $M$ and the discontinuous function  $f$ that make  possible to approach $(ii\Longrightarrow iii)$, in Theorem \ref{faraci}, via Ricceri's Theorem \cite{MR2083908}. 

Under our hypotheses and the decomposition of the functional $I$  into $\Psi_1$ plus $\Psi_2$, that is,
\begin{equation}
\label{210}
I = \Psi_1 + \mu\Psi_2,
\end{equation}
we have written  $I$ as a sum of a locally Lipschitz functional $\Psi_1$  and a convex one $\Psi_2$ (see (\ref{t1}) and (\ref{t22})). Below, let us recall few notations and results on the  Critical Point Theory for the functional $\Psi_1$ and $\Psi_2$. We refer the reader to Carl, Le  \&  Motreanu \cite{carl},  Chang \cite{chang}, Clarke \cite{Clarke1} and references therein for more  details about this issue.

Let us begin by remembering that  the generalized directional derivative of $\Psi_1$ at $u\in W_0^{1,\Phi}(\Omega)$ in the direction of
$v\in W_0^{1,\Phi}(\Omega)$ is defined by
$$
\Psi_1^0(u;v)=\displaystyle
\limsup_{h\rightarrow0~\lambda\rightarrow0^+}\frac{\Psi_1(u+h+\lambda v)-\Psi_1(u+h)}{\lambda}
$$
 and the subdifferential of $\Psi_1^0(u;\cdot)$ at  $z\in W_0^{1,\Phi}(\Omega)$ is given by
$$
\partial \Psi_1^0(u;z)=\left\{\mu\in \left(W_0^{1,\Phi}(\Omega)\right)^{\prime}~:~\Psi_1^0(u;v)\geq \Psi_1^0(u;z)+ \langle\mu,v-z\rangle~\mbox{for all }v\in W_0^{1,\Phi}(\Omega)\right\},
$$
since $\Psi_1^0(u;\cdot)$ is a convex function.
In particular, $\partial \Psi_1^0(u;0)$ is named by the generalized gradient of $\Psi_1$ at $u$ and denoted by $\partial \Psi_1(u)$.

About  the functional $\Psi_2$, its effective domain is defined by $Dom(\Psi_2) = \{ u \in W_0^{1,\Phi}(\Omega)~: ~ \Psi_2(u) < \infty\}$ and a  point $u \in Dom(\Psi_2)$ is called a critical point of the functional $I$  if
	$$\Psi_1^0(u; v-u) + \Psi_2(v) - \Psi_2(u)	\geq 0, ~\forall ~v \in W_0^{1,\Phi}(\Omega) .$$
	
In this context, we say that $I$ satisfies the Palais-Smale condition (the condition (PS) for short) if:
\vspace{0.1cm}
\begin{center}
``$\{u_n\} \subset W_0^{1,\Phi}(\Omega)$ is such that $I(u_n) \to c $ and 
	$$ \Psi_1^0(u_n; v-u_n) + \Psi_2(v) - \Psi_2(u_n)	\geq -\epsilon_n\|v - u_n\|,  ~\forall ~v \in W_0^{1,\Phi}(\Omega),$$ where $\epsilon_n \to 0^+$, then $\{u_n\}$ possesses a convergent subsequence.''
\end{center}

In order to prove the next Lemma, let us define the functionals 
$$J_1(u) := \hat{M}(\mathcal{P}(u))~~\mbox{and}~~J_2(u) := \displaystyle\int_\Omega F(x,u)dx,$$ 
where $\mathcal{P}$ is defined by 
$$\mathcal{P}(u)=\int_{\Omega}\Phi(|\nabla u|)dx.$$

It is well know that, under the hypotheses  $(\phi_0)$ and $(\phi_1)$, the functional $\mathcal{P}$ is sequentially weakly lower semicontinuous and $C^1$  with 
$$\langle\mathcal{P}'(u), \varphi\rangle = \displaystyle\int_\Omega a(|\nabla u|)\nabla u\nabla \varphi dx, ~\forall  ~\varphi \in W_0^{1,\Phi}(\Omega).$$  Moreover, $\mathcal{P}': W_0^{1,\Phi}(\Omega) \to W_0^{-1,\tilde{\Phi}}(\Omega)$ is a 
strictly monotonic operator of the type $(S_+)$. Thus, we can rewrite $I$  as
\begin{equation}
\label{211}
I=  \Psi_1 + \mu \Psi_2 =J_1 - \lambda J_2 + \mu \Psi_2, 
\end{equation}
where $J_1$ is  $C^1$, $J_2$ is  locally Lipschitz and $\Psi_2$ is a convex functional.
\begin{lemma}\label{lllllll}  Suppose $(\phi_0)$, $(\phi_1)$, $(f_0)$ and $(f_1)$ holds. Then,
	\begin{itemize}
		\item[$i)$] $J_1 \in C^1(W_0^{1,\Phi}(\Omega), \mathbb{R}))$ and 
		$$ \langle J_1'(u), \varphi\rangle = M(\mathcal{P}(u))\displaystyle\int_\Omega a(|\nabla u|)\nabla u\nabla \varphi dx,~ \forall \varphi \in W_0^{1,\Phi}(\Omega),$$ 
		\item[$ii)$] $J_2 \in \mathrm{Lip}_{\mathrm{loc}}(W_0^{1,\Phi}(\Omega), \mathbb{R})$ and 
		$$\partial J_2(u)\subseteq \left\{w\in \left(L^{H}(\Omega)\right)^{\prime}~:~w(x)\in\partial F(x,u(x))~\mbox{a.e.}~x\in\Omega\right\}.$$	
		In particular, for each $w \in \partial J_2(u)$, there exists a unique $\omega \in L^{\tilde{H}}(\Omega)$ such that 
		$$\omega \in \left[f(x,u(x)-0), f(x,u(x)+0)\right]~\mbox{a.e.}~x\in\Omega~\mbox{and }\langle w,\varphi\rangle = \int_{\Omega} \omega \varphi dx,~\forall ~\varphi \in W_0^{1,\Phi}(\Omega),$$
		\item[$iii)$] $J_1'$ is of type $(S_+)$, that is, 
		$$  ``\mbox{if} ~ u_n \rightharpoonup u~ \mbox{and} ~\displaystyle\lim_{n \to \infty}\sup ~ \langle J_1'(u_n), u_n - u\rangle \leq 0, ~\mbox{then} ~u_n \to u ~\mbox{in} ~W_0^{1,\Phi}(\Omega)" . $$
		\item[$iv)$] if $u_n \rightharpoonup u$ in $W_0^{1,\Phi}(\Omega)$, then $$J_2^0(u_n;u_n-u)\to 0 ~\mbox{and}~ \langle \eta_n, u_n - u\rangle=\int_{\Omega}\eta_n(u_n-u)dx \to 0, ~\forall ~\eta_n\in \partial J_2(u_n),$$
		\item[$v)$] if $u_n \rightharpoonup u$ in $W_0^{1,\Phi}(\Omega)$, then $J_2(u_n) \to J_2(u)$,
		\item[$vi)$] $J_1$ is sequentially weakly lower semicontinuous in $W_0^{1,\Phi}(\Omega)$,
\item[$vii)$] $\Psi_1 \in \mathrm{Lip}_{\mathrm{loc}}(W_0^{1,\Phi}(\Omega); \mathbb{R})$ is sequentially weakly lower semicontinuous and $\Psi_1^0$ is of the type $(S_+)$.	
	\end{itemize}
\end{lemma}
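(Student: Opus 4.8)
The plan is to prove each of the seven items of Lemma \ref{lllllll} in turn, treating items $(i)$--$(iii)$ as the foundational computations and then bootstrapping the weak-convergence statements $(iv)$--$(vii)$ from them together with the compact embedding $W_0^{1,\Phi}(\Omega)\hookrightarrow L^H(\Omega)$ guaranteed by $H\prec\prec\Phi_*$. For item $(i)$, since $\mathcal{P}$ is already known to be $C^1$ with the stated derivative and $\hat M(t)=\int_0^t M(s)\,ds$ is $C^1$ because $M$ is continuous, the chain rule gives $J_1=\hat M\circ\mathcal{P}\in C^1$ with $\langle J_1'(u),\varphi\rangle=M(\mathcal{P}(u))\langle\mathcal{P}'(u),\varphi\rangle$, which is exactly the claimed formula.

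For item $(ii)$, the growth bound in $(f_1)$, namely $|\eta|\le a_1+a_2\,\widetilde H^{-1}\!\circ H(a_3|t|)$ for all $\eta\in\partial F(x,t)$, is precisely what forces the Clarke gradient to be controlled in the dual Orlicz space $L^{\tilde H}(\Omega)$; I would invoke Lebourg's mean value theorem to get local Lipschitz continuity of $J_2$ on $W_0^{1,\Phi}(\Omega)$ (using the continuous embedding into $L^H(\Omega)$), then apply the Aubin--Clarke theorem for integral functionals to obtain the inclusion $\partial J_2(u)\subseteq\{w\in(L^H)':w(x)\in\partial F(x,u(x))\}$. The identification $\partial F(x,s)=[f(x,s-0),f(x,s+0)]$ follows from $(f_0)$: $F(x,\cdot)$ is locally Lipschitz and the discontinuity of $f$ only at $\tilde a$ means its Clarke gradient is the singleton $\{f(x,s)\}$ away from $\tilde a$ and the filled-in interval at $\tilde a$. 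Item $(iii)$ is the standard $(S_+)$ property of the $\Phi$-Laplacian, which transfers to $J_1'$ because the scalar factor $M(\mathcal{P}(u_n))$ stays bounded away from $0$ and $\infty$ along a bounded sequence (here I would use $(M)$ and continuity of $M$), so $\limsup\langle J_1'(u_n),u_n-u\rangle\le0$ reduces to $\limsup\langle\mathcal{P}'(u_n),u_n-u\rangle\le0$.

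For the weak-convergence items $(iv)$ and $(v)$, the engine is the compact embedding $W_0^{1,\Phi}(\Omega)\hookrightarrow\hookrightarrow L^H(\Omega)$: if $u_n\rightharpoonup u$ then $u_n\to u$ strongly in $L^H(\Omega)$, so I would estimate $\langle\eta_n,u_n-u\rangle=\int_\Omega\eta_n(u_n-u)\,dx$ by Hölder's inequality in the Orlicz pair $(L^{\tilde H},L^H)$, bounding $\|\eta_n\|_{\tilde H}$ via the $(f_1)$ growth and the $\Delta_2$-condition on $H$, and $\|u_n-u\|_H\to0$; the same Krasnoselskii-type continuity of the Nemytskii operator gives $J_2(u_n)\to J_2(u)$. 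Item $(vi)$ follows since $\mathcal{P}$ is sequentially weakly lower semicontinuous and $\hat M$ is continuous and nondecreasing (the latter from $M\ge0$), so their composition preserves weak lower semicontinuity. Finally $(vii)$ assembles the pieces: $\Psi_1=J_1-\lambda J_2$ is locally Lipschitz (sum of $C^1$ and $\mathrm{Lip}_{\mathrm{loc}}$), it is weakly lower semicontinuous because $J_1$ is by $(vi)$ while $\lambda J_2$ is weakly continuous by $(v)$, and the $(S_+)$ property of $\Psi_1^0$ follows from $(iii)$ combined with $(iv)$, since the generalized directional derivative of $-\lambda J_2$ contributes only terms that vanish along weakly convergent sequences.

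\textbf{The main obstacle} I expect is item $(ii)$, specifically the rigorous identification of the Clarke subdifferential $\partial F(x,\cdot)$ with the interval $[f(x,\cdot-0),f(x,\cdot+0)]$ and the passage from the pointwise bound in $(f_1)$ to genuine membership of the selections $\omega$ in $L^{\tilde H}(\Omega)$ with the uniqueness claim. The delicate point is that $(f_1)$ bounds elements of $\partial F(x,t)$ by $\widetilde H^{-1}\!\circ H$, and one must verify that this composite yields exactly the integrability dual to $L^H$; the uniqueness of $\omega$ then rests on showing the measurable-selection is determined a.e., which requires the discontinuity set of $f$ to be a single point and the earlier measure-zero level-set analysis (from Theorem \ref{med-u=a}) to be unavailable at this stage, so the uniqueness must instead be extracted directly from the representation of the dual pairing. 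Everything else is a careful but routine transcription of the $p$-Laplacian arguments to the Orlicz-Sobolev setting, where the $\Delta_2$-condition on $H$ and the embedding hypothesis $H\prec\prec\Phi_*$ do the heavy lifting.
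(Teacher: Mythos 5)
Your proposal is correct and follows essentially the same route as the paper: chain rule for $(i)$, the Orlicz-space Aubin--Clarke theorem (Le--Motreanu--Motreanu) plus the growth condition $(f_1)$ for $(ii)$, the $(S_+)$ property of $\mathcal{P}'$ for $(iii)$, the compact embedding $W_0^{1,\Phi}(\Omega)\hookrightarrow L^{H}(\Omega)$ with a dominated-convergence/H\"older estimate for $(iv)$--$(v)$, and the assembly of $(vii)$ from $(iii)$--$(iv)$ exactly as you describe. The uniqueness of $\omega$ that you flag as the main obstacle is handled in the paper simply by the Riesz representation theorem for Orlicz spaces applied to $w\in(L^{H}(\Omega))'$, so it is not a genuine difficulty.
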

\begin{proof}
	 First, we note that the item $i)$ is an immediate consequence of assumptions on $M$ and properties of $\mathcal{P}$. Next, we present a summary proof of the other items.
	\begin{itemize}
		\item[$ii)$] Let $\widetilde J_2:L^H(\Omega)\rightarrow \r$ be a functional defined by
		$\widetilde J_2 (u)=\int_{\Omega} F(x,u)dx,~u\in L^H(\Omega).$ So, it follows from Theorem 1.1 in  \cite{motreanu} that $\widetilde J_2\in \mathrm{Lip}_{\mathrm{loc}}(L^H(\Omega);\mathbb{R})$ and 
		$$
		\partial \widetilde J_2(u) \subseteq \left\{w\in \left(L^{H}(\Omega)\right)^{\prime}~:~w(x)\in\partial F(x,u(x))~\mbox{a.e.}~x\in\Omega\right\}.
		$$	
 	Since $\overline{W^{1,\Phi}_0(\Omega)}^{L^{H}}=L^{H}(\Omega)$, we are able to apply \cite[Theorem  2.2]{chang} to conclude that $J_2={\widetilde J_2}{{\big|}_{W_0^{1,\Phi}(\Omega)}}$ is locally Lipschitz continuous and
		$$
		\partial J_2(u) \subseteq \partial \widetilde J_2(u)\subseteq\left\{w\in \left(L^{H}(\Omega)\right)^{\prime}~:~w(x)\in\partial F(x,u(x))~\mbox{a.e.}~x\in\Omega\right\}.
		$$
		
		The conclusion of the proof is a direct consequence of Theorem 1.1 in \cite{motreanu} and classical  Riesz Theorem for Orlicz spaces, see for instance \cite{Rao}.
		\item[$iii)$] This conclusion is a consequence of item $i)$ and the fact that $\mathcal{P}'$ is of the type $(S_+)$.
		\item[$iv)$] Let $u_n \rightharpoonup u$ and  $\eta_n\in\partial J_2(u_n)$. Since $\eta_n\in \left(L^{H}(\Omega)\right)^{\prime}	$, the Riez Theorem for Orlicz spaces implies that there exists a unique $\eta_n \in  L^{\widetilde{H}}(\Omega)$, still denoted by $\eta_n$, such that
		$$\langle \eta_n,u_n - u\rangle=\int_{\Omega}\eta_n(u_n-u)dx.$$

Besides this, by using  $(f_1)$, $H\in \Delta_2$ and Young's inequality, we obtain 
		\begin{eqnarray}\label{conv-0}
			|\eta_n(u_n - u)| &\leq &a_1|u_n - u| + a_2 \widetilde H^{-1}\circ H(a_3|u_n - u|+a_3|u| )|u_n - u| \nonumber \\
			&\leq & C(|u_n - u| + H(|u_n - u|+|u|)),\nonumber 
		\end{eqnarray}
which leads us to conclude that $|\eta_n(u_n - u)| \leq g(x)$ for some $g \in L^1(\Omega)$, after using  the compact embedding $W_0^{1,\Phi}(\Omega) \hookrightarrow L^H(\Omega)$ and Lemma 5.3 in \cite{santos1}. As $u_n \to u$ a.e in $\Omega$, the first claim follows by Lebesgue Theorem.
				
To end the proof, it follows from Proposition 2.171 in \cite{carl}   that there exists $\widetilde \eta_n\in \partial J_2(u_n)$ such that 
		$J_2^0(u_n;v)=\langle \widetilde \eta_n,v\rangle$, for all $v\in W_0^{1,\Phi}(\Omega)$. Hence, we obtain from above conclusion that
		$J_2^0(u_n;u_n-u)=\langle \widetilde \eta_n,u_n-u\rangle\rightarrow 0.$
		
		\item[$v)$] As in the previous item, by using $(f_1)$ and dominated convergence the result follows. 
		\item[$vi)$] This item is a consequence of the continuity and monotonicity of $\hat{M}$ and the fact that $\mathcal{P}$ is sequentially weakly lower semicontinuous in $W_0^{1,\Phi}(\Omega)$.
		\item[$vii)$] By items $i)$ and $ii)$ above, we have  $\Psi_1\in \mathrm{Lip}_{\mathrm{loc}}(W^{1,\Phi}_0(\Omega);\r)$. Besides this,  we get from item $iv)$ and $(f_1)$ that $\Psi_1$ is sequentially weakly lower semicontinuous. Let $u_n \rightharpoonup u$ such that $\limsup_{n\rightarrow\infty}\Psi_1^0(u_n;u_n-u)\leq 0$. Then,  $(iii)$ and $(iv)$ above lead us to
	\begin{eqnarray}
	\limsup_{n\rightarrow\infty}\langle-\big(M\circ \mathcal{P}\big)(u_n)\Delta_\Phi u_n,u_n-u\rangle& = &\limsup_{n\rightarrow\infty}\langle
	-\big(M\circ \mathcal{P}\big)(u_n)\Delta_\Phi  u_n,u_n-u\rangle\nonumber\\
	&-&\lambda\lim_{n\rightarrow\infty}J_2^0(u_n;u_n-u)
		 = 	\limsup_{n\rightarrow\infty}\Psi_1^0(u_n;u_n-u)\leq 0, \nonumber
	\end{eqnarray}
which implies the claimed, after using  the $iii)$. This ends the proof.
		\fim
	\end{itemize}
\end{proof}

The next Lemma gives us some properties regarding $\Psi_2$.
\begin{lemma}\label{llllema}
	Assume $0 < b \in L^1(\Omega)$. If Problem $(S)$ 
	admits a solution in $W_0^{1,\Phi}(\Omega)$, then $\Psi_2$ is a proper functional. Besides this, $\Psi_2$ is convex, sequentially weakly lower semicontinuous and $\Psi_2(u)\neq -\infty$ for all $0<u \in W_0^{1,\Phi}(\Omega) $.
\end{lemma}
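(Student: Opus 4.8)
The plan is to dispatch the four assertions separately, exploiting that, since $\delta>1$, the integrand is nonnegative. Indeed, for $t>0$ one has $G(x,t)=\frac{-b(x)t^{1-\delta}}{1-\delta}=\frac{b(x)}{(\delta-1)\,t^{\delta-1}}\ge 0$ because $b>0$, while $G(x,t)=+\infty$ for $t\le 0$. Thus $G(x,\cdot)\ge 0$ pointwise, whence $\Psi_2(u)\ge 0$ for every $u\in W_0^{1,\Phi}(\Omega)$; in particular $\Psi_2(u)\neq-\infty$, which already settles the last assertion (and a bit more, since nonnegativity holds for all $u$, not only for $0<u$).

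For properness I would invoke the hypothesis that $(S)$ admits a solution $u_0\in W_0^{1,\Phi}(\Omega)$. Using $\varphi=u_0$ as a test function in the weak formulation of $(S)$ gives $\int_\Omega b\,u_0^{1-\delta}\,dx=\int_\Omega a(|\nabla u_0|)|\nabla u_0|^2\,dx$, and the right-hand side is finite because $(\phi_1)$ yields $a(|\nabla u_0|)|\nabla u_0|^2=\phi(|\nabla u_0|)|\nabla u_0|\le \phi_+\,\Phi(|\nabla u_0|)$, so the integral is bounded by $\phi_+\,\mathcal{P}(u_0)<\infty$. Hence $\Psi_2(u_0)=\frac{1}{\delta-1}\int_\Omega b\,u_0^{1-\delta}\,dx<\infty$, so $u_0\in Dom(\Psi_2)\neq\emptyset$; combined with $\Psi_2>-\infty$ this makes $\Psi_2$ proper.

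Convexity I would get pointwise and then integrate. On $(0,\infty)$ the map $t\mapsto t^{1-\delta}$ has second derivative $\delta(\delta-1)t^{-\delta-1}>0$, hence is convex; multiplying by the positive constant $b(x)/(\delta-1)$ preserves convexity, and extending by $+\infty$ on $(-\infty,0]$ keeps $G(x,\cdot)$ convex on $\mathbb{R}$ (the convexity inequality is trivial whenever one argument is nonpositive, since then its right-hand side equals $+\infty$). Integrating the pointwise inequality $G(x,\theta u+(1-\theta)v)\le \theta G(x,u)+(1-\theta)G(x,v)$ over $\Omega$, which is legitimate because $G\ge0$ rules out any $\infty-\infty$ indeterminacy, delivers convexity of $\Psi_2$.

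The hard part is the sequential weak lower semicontinuity, since weak convergence in $W_0^{1,\Phi}(\Omega)$ provides no pointwise control on the sequence. I would therefore route through strong lower semicontinuity and then use the classical upgrade for convex functionals. First, $G(x,\cdot)$ is lower semicontinuous on $\mathbb{R}$ (continuous on $(0,\infty)$, with $G(x,t)\to+\infty$ as $t\to0^+$, matching the value $+\infty$ on $(-\infty,0]$). If $u_n\to u$ strongly in $W_0^{1,\Phi}(\Omega)$, then $u_n\to u$ in $L^{\Phi}(\Omega)$, so passing to a subsequence that realizes $\liminf_n\Psi_2(u_n)$ and converges a.e., lower semicontinuity gives $G(x,u(x))\le\liminf_nG(x,u_n(x))$ a.e., and Fatou's lemma (applicable since $G\ge0$) yields $\Psi_2(u)\le\liminf_n\Psi_2(u_n)$; that is, $\Psi_2$ is strongly lower semicontinuous. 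Finally, since $\Psi_2$ is convex and strongly lower semicontinuous, its sublevel sets are convex and strongly closed, hence weakly closed by Mazur's lemma; this is precisely the standard fact that a convex, strongly lower semicontinuous functional on a Banach space is weakly sequentially lower semicontinuous, which completes the proof.
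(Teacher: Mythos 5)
Your proof is correct, and the first three assertions (nonnegativity of $G$, properness via the solution $u_0$ of $(S)$, pointwise convexity integrated up) follow essentially the same path as the paper; note only that the paper's definition of a solution already builds in $bu_0^{-\delta}\varphi\in L^1(\Omega)$ for every test function, so $\varphi=u_0$ gives $\int_\Omega bu_0^{1-\delta}dx<\infty$ without needing your energy identity, though your identity is a legitimate (and more informative) way to see the same thing. The one step where you genuinely diverge is the weak lower semicontinuity. The paper applies Fatou's lemma directly to a weakly convergent sequence: your remark that weak convergence in $W_0^{1,\Phi}(\Omega)$ provides ``no pointwise control'' overlooks the compact embedding $W_0^{1,\Phi}(\Omega)\hookrightarrow L^{\Phi}(\Omega)$, which upgrades weak convergence to strong $L^\Phi$-convergence and hence to a.e.\ convergence along a subsequence, after which Fatou (legitimate since $G\ge 0$) finishes in one line. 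Your route --- strong lower semicontinuity via Fatou, then the convex-plus-closed-sublevel-sets/Mazur upgrade to weak sequential lower semicontinuity --- is equally valid and has the merit of not using any compactness of the embedding, so it would survive in settings where that compactness fails; the paper's route is shorter given the Sobolev framework already in place. Both arguments are complete, so there is no gap.
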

\begin{proof} First, note that $0 \leq G(x,u) \leq +\infty$ in $\Omega$ for all $ u \in W_0^{1,\Phi}(\Omega)$, so $\Psi_2(u) \neq -\infty$. Moreover, if $u_0 \in W_0^{1,\Phi}(\Omega)$ is a solution of $(S)$, then $u_0 \in Dom(\Psi_2)$, which proves $Dom(\Psi_2) \neq \emptyset$. 
	
The convexity follows directly from the definition of $\Psi_2$.
	Finally, by the Fatou's lemma, we conclude that $\Psi_2$ is sequentially weakly lower semicontinuous.
 \fim
\end{proof}

\begin{lemma}\label{I-coercive}
	Suppose $(\phi_0) - (\phi_3)$, $(M)$, $(f_1)$ and $(f_3)$ hold. Then, $I$ is a coercive functional.
\end{lemma}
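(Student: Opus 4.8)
The plan is to bound $I(u)=J_1(u)-\lambda J_2(u)+\mu\Psi_2(u)$ from below by a quantity that diverges as $\|u\|_{W_0^{1,\Phi}}\to\infty$. First I would dispose of the singular term: since $\delta>1$ we have $1-\delta<0$, so $G(x,t)=b(x)t^{1-\delta}/(\delta-1)\geq 0$ for $t>0$ and $G(x,t)=+\infty\geq 0$ for $t\leq 0$; hence $\mu\Psi_2(u)=\mu\int_\Omega G(x,u)\,dx\geq 0$ and this term can simply be discarded from the lower estimate. By the Poincaré inequality in the Orlicz--Sobolev setting, $\|u\|_{W_0^{1,\Phi}}\to\infty$ is equivalent to $\|\nabla u\|_\Phi\to\infty$, so it suffices to work with $\|\nabla u\|_\Phi$.

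The next step extracts a coercive power from the Kirchhoff term. Hypothesis $(M)$ gives $\hat M(t)=\int_0^t M(s)\,ds\geq \frac{m_0}{\alpha}t^\alpha$, while the standard consequence of $(\phi_1)$, namely $\int_\Omega\Phi(|\nabla u|)\,dx\geq \|\nabla u\|_\Phi^{\phi_-}$ whenever $\|\nabla u\|_\Phi\geq 1$, yields, by monotonicity of $s\mapsto s^\alpha$,
$$J_1(u)=\hat M\!\left(\mathcal P(u)\right)\geq \frac{m_0}{\alpha}\,\mathcal P(u)^\alpha\geq \frac{m_0}{\alpha}\,\|\nabla u\|_\Phi^{\alpha\phi_-}\quad\text{for }\|\nabla u\|_\Phi\geq 1.$$

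For the remaining term I would use $(f_3)$: since $\sup_{\overline\Omega}F(x,t)/t^{\alpha\phi_-}\to 0$ as $t\to\infty$ and $F$ is bounded on bounded $t$-ranges uniformly in $x$ (a consequence of the growth furnished by $(f_1)$), for every $\varepsilon>0$ there is $C_\varepsilon>0$ with $F(x,t)\leq \varepsilon|t|^{\alpha\phi_-}+C_\varepsilon$ for a.e.\ $x\in\Omega$ and all $t\in\mathbb{R}$ (recall $F(x,t)=0$ for $t\leq 0$). The crucial structural input is the embedding encoded in $(M)$: from $\Phi_\alpha\prec\prec\Phi_*$ together with $W_0^{1,\Phi}(\Omega)\hookrightarrow L^{\Phi_*}(\Omega)$ one gets $W_0^{1,\Phi}(\Omega)\hookrightarrow L^{\Phi_\alpha}(\Omega)$, and since $\Phi_\alpha(t)\geq t^{\alpha\phi_-}$ for large $t$ this furnishes $W_0^{1,\Phi}(\Omega)\hookrightarrow L^{\alpha\phi_-}(\Omega)$; combined with Poincaré it produces a constant $C_1>0$ with $\int_\Omega|u|^{\alpha\phi_-}\,dx\leq C_1\|\nabla u\|_\Phi^{\alpha\phi_-}$. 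Hence $\lambda J_2(u)\leq \lambda\varepsilon C_1\|\nabla u\|_\Phi^{\alpha\phi_-}+\lambda C_\varepsilon|\Omega|$.

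Putting the pieces together, for $\|\nabla u\|_\Phi\geq 1$ one obtains
$$I(u)\geq \Big(\tfrac{m_0}{\alpha}-\lambda\varepsilon C_1\Big)\|\nabla u\|_\Phi^{\alpha\phi_-}-\lambda C_\varepsilon|\Omega|,$$
and choosing $\varepsilon<m_0/(\alpha\lambda C_1)$ makes the leading coefficient strictly positive, so $I(u)\to+\infty$ as $\|\nabla u\|_\Phi\to\infty$, which is the asserted coercivity. The main obstacle I anticipate is precisely the exponent bookkeeping in the last two steps: one must verify that the growth exponent $\alpha\phi_-$ appearing in $(f_3)$ is exactly the smallest power produced by the Kirchhoff bound, so that the embedding supplied by $\Phi_\alpha\prec\prec\Phi_*$ matches it and the two $\|\nabla u\|_\Phi^{\alpha\phi_-}$ contributions can be balanced. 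Any mismatch here — for instance picking up $\phi_+$ instead of $\phi_-$ from $\Phi$, or failing to land in $L^{\alpha\phi_-}(\Omega)$ — would destroy the sign of the leading coefficient and break the argument.
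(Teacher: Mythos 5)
Your proposal is correct and follows essentially the same route as the paper's proof: discard $\mu\Psi_2\geq 0$ using $\delta>1$, bound $\hat M(\mathcal P(u))\geq \frac{m_0}{\alpha}\|\nabla u\|_\Phi^{\alpha\phi_-}$ from $(M)$ and the standard $\Phi$-norm inequality, derive $F(x,t)\leq\varepsilon|t|^{\alpha\phi_-}+C_\varepsilon$ from $(f_1)$ and $(f_3)$, and absorb the $J_2$ term via the embedding $W_0^{1,\Phi}(\Omega)\hookrightarrow L^{\alpha\phi_-}(\Omega)$ by choosing $\varepsilon$ small. Your explicit derivation of that embedding from $\Phi_\alpha\prec\prec\Phi_*$ is in fact cleaner than the paper's appeal to an unstated hypothesis ``$(\phi_3)$''.
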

\begin{proof}
First, by the assumption $(M)$ and Lemma 5.1 in \cite{santos1}, we have
\begin{equation}
\label{14}
\hat{M}\Big(\mathcal{P}(u)\Big) \geq \frac{m_0}{\alpha}\|\nabla u\|_{\Phi}^{\alpha\phi_-}~\mbox{for all} ~u \in W_0^{1,\Phi}(\Omega) ~\mbox{with}~\|\nabla u\|_\Phi \geq 1.
\end{equation}
Moreover, by taking $\epsilon>0$ small enough, it follows from $(f_1)$ and $(f_3)$ that $F(x,t) \leq C_1 + \epsilon |t|^{\alpha\phi_-}$  for all $x \in \Omega$, $t \in \mathbb{R}$ and for some $C_1 > 0$. Thus, by the embedding  $W_0^{1,\Phi}(\Omega)\hookrightarrow L^{\alpha\phi_-}(\Omega)$, which follows from the hypothesis $(\phi_3)$, we conclude
	\begin{equation}\label{coer}
	\Psi_1(u) \geq C_3\Big(\|\nabla u\|_{\Phi}^{\alpha\phi_-}  -1\Big)~\mbox{for all }~u \in W_0^{1,\Phi}(\Omega) ~\mbox{with}~\|\nabla u\|_\Phi \geq 1
	\end{equation} 
 for some $C_3 > 0$.	
Since $\delta > 1$, we have $\Psi_2(u) \geq 0$. Thus, after all these information and (\ref{210}), we conclude $I(u) \to \infty$ as $\|\nabla u\|_{\Phi} \to \infty,
$ that is, $I$ is coercive. This ends the proof.\fim
\end{proof}

\begin{lemma}\label{PS}
	 Suppose $(S)$ admits a solution in $W_0^{1,\Phi}(\Omega)$ and the assumptions $(\phi_0) - (\phi_3)$, $(M)$,$(f_1)$, $(f_3)$ hold. Then $I$ satisfies the $(PS)$ condition. 	
\end{lemma}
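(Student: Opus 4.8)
The plan is to exploit the decomposition $I = J_1 - \lambda J_2 + \mu\Psi_2$ together with the structural facts already assembled in Lemma \ref{lllllll} and Lemma \ref{llllema}, reducing the verification of $(PS)$ to the $(S_+)$ property of $J_1'$. Let $\{u_n\}\subset W_0^{1,\Phi}(\Omega)$ satisfy $I(u_n)\to c$ and the $(PS)$ inequality $\Psi_1^0(u_n; v-u_n)+\Psi_2(v)-\Psi_2(u_n)\geq -\epsilon_n\|v-u_n\|$ for all $v$, with $\epsilon_n\to 0^+$. First I would obtain boundedness: since $I$ is coercive by Lemma \ref{I-coercive} and $I(u_n)$ is bounded, $\{u_n\}$ is bounded in $W_0^{1,\Phi}(\Omega)$, so by reflexivity (ensured by $(\phi_0)$, $(\phi_1)$) a subsequence satisfies $u_n\rightharpoonup u$. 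Because $I(u_n)$ remains finite while $\Psi_1(u_n)$ stays bounded on the bounded sequence and $\Psi_2\geq 0$ (as $\delta>1$), the values $\Psi_2(u_n)$ are bounded; hence, by the weak lower semicontinuity of $\Psi_2$ (Lemma \ref{llllema}), the limit lies in $Dom(\Psi_2)$, i.e.\ $\Psi_2(u)<\infty$.

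The decisive step is to feed $v=u$ into the $(PS)$ inequality and rewrite the resulting generalized directional derivative. Since $J_1\in C^1$ is strictly differentiable, Clarke's sum rule gives $\Psi_1^0(u_n;u-u_n)=\langle J_1'(u_n),u-u_n\rangle + \lambda J_2^0(u_n;u_n-u)$, the last term arising from the identity $(-\lambda J_2)^0(u_n;u-u_n)=\lambda J_2^0(u_n;u_n-u)$. Substituting and isolating the $J_1'$ term I obtain $\langle J_1'(u_n),u_n-u\rangle \leq \epsilon_n\|u-u_n\| + \lambda J_2^0(u_n;u_n-u) + \Psi_2(u)-\Psi_2(u_n)$. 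I would then pass to the $\limsup$: the first summand vanishes since $\epsilon_n\to 0$ and $\|u-u_n\|$ is bounded; the second vanishes by Lemma \ref{lllllll}$(iv)$, which yields $J_2^0(u_n;u_n-u)\to 0$ precisely because the embedding $W_0^{1,\Phi}(\Omega)\hookrightarrow L^H(\Omega)$ is compact; and $\limsup_n(\Psi_2(u)-\Psi_2(u_n)) = \Psi_2(u)-\liminf_n\Psi_2(u_n)\leq 0$ by the weak lower semicontinuity of $\Psi_2$. Hence $\limsup_n\langle J_1'(u_n),u_n-u\rangle \leq 0$.

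To finish, I would invoke the $(S_+)$ property of $J_1'$ (Lemma \ref{lllllll}$(iii)$): combined with $u_n\rightharpoonup u$ and the last inequality, it forces $u_n\to u$ strongly in $W_0^{1,\Phi}(\Omega)$, which is the required convergent subsequence.

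I expect the main obstacle to be the correct handling of the nonsmooth, discontinuous term $J_2$. One must route the convergence of $J_2^0(u_n;u_n-u)$ through the generalized gradient bound furnished by $(f_1)$ and the compact embedding into $L^H(\Omega)$ (Lemma \ref{lllllll}$(iv)$), rather than pretending $\Psi_1$ is differentiable; and one must keep the convex, strongly-singular term $\Psi_2$ under control solely through its weak lower semicontinuity and nonnegativity, never attempting to differentiate it. Once these two nonsmooth pieces are isolated and sent to zero (respectively to a nonpositive $\limsup$), the strict monotonicity of type $(S_+)$ of $J_1'$ delivers the strong convergence automatically.
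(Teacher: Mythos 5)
Your proposal is correct and follows essentially the same route as the paper: coercivity gives boundedness and a weakly convergent subsequence, weak lower semicontinuity controls $\Psi_2$ and shows $\Psi_2(u)<\infty$, the choice $v=u$ in the $(PS)$ inequality combined with Lemma \ref{lllllll}$(iv)$ isolates $\limsup_n\langle J_1'(u_n),u_n-u\rangle\leq 0$, and the $(S_+)$ property of $J_1'$ closes the argument. The only cosmetic differences are that the paper routes the middle step through $(-\Psi_1)^0(u_n;u_n-u)$ and the subadditivity of generalized directional derivatives rather than the exact sum rule for the $C^1$ term, and it obtains $\Psi_2(u)<\infty$ from the weak lower semicontinuity of $I$ itself.
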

\begin{proof}
	Let $(u_n) \subset W_0^{1,\Phi}(\Omega)$ and $(\epsilon_n) \subset (0,\infty)$ be sequences such that $I(u_n) \to c \in \mathbb{R}$, $\epsilon_n \to 0$ and
	\begin{equation}\label{PS2}
	\Psi^0_1(u_n; \varphi - u_n) + \mu\Big(\Psi_2(\varphi) - \Psi_2(u_n)\Big) \geq -\epsilon_n \|\nabla(\varphi-u_n )\|_{\Phi} ~\mbox{for all} ~\varphi\in W_0^{1,\Phi}(\Omega) ~\mbox{and} ~ n \in \mathbb{N}.
	\end{equation} 
	
It follows from the coercivity of $I$, obtained in the previous Lemma, that $(u_n)$ is bounded in $W_0^{1,\Phi}(\Omega)$. Thus,  passing to a subsequence if necessary, we may assume that $u_n \rightharpoonup u$. So, by Lemmas \ref{lllllll}-$vii)$ and \ref{llllema}, we obtain that $I$ is sequentially weakly lower semicontinuous, which yields
	$$I(u) \leq \displaystyle\liminf_{n \to \infty} I(u_n) = c < \infty,$$ 
whence $\Psi_2(u) < \infty$. So, by  taking $\varphi = u$ in (\ref{PS2}),  we obtain 
	$$ -(-\Psi_1)^0(u_n; u_n - u) \leq \mu\Big(\Psi_2(u) - \Psi_2(u_n)\Big) + \epsilon_n\|\nabla(u_n- u)\|_\Phi ~\mbox{for} ~n \in \mathbb{N}.$$ 
	
Therefore, by using the previous inequality and the lower semicontinuity of $\Psi_2$, we get
	\begin{equation*}\label{limsup}
	\liminf_{n \to \infty}  (-\Psi_1)^0(u_n; u_n - u)  \geq 0,
	\end{equation*}
which leads to
		\begin{eqnarray*}
		0\leq \liminf_{n \to \infty}  (-\Psi_1)^0(u_n; u_n - u) & \leq & \liminf_{n \to \infty} \left[(-J_1)^0 (u_n; u_n - u)+\lambda J_2^0(u_n; u_n - u) \right]\\
		&=& \liminf_{n \to \infty} \langle-J_1'(u_n);u_n-u\rangle+\lambda\lim_{n\to \infty} J_2^0(u_n; u_n - u)\\
		&=&  -\limsup_{n \to \infty} \langle J_1'(u_n);u_n-u\rangle,
		\end{eqnarray*} 
after applying Lemma \ref{lllllll}-$iv)$.	Thus,  Lemma \ref{lllllll}-$iii)$ implies that $u_n \to u$ in $W_0^{1,\Phi}(\Omega)$ to a subsequence that ends the proof of Lemma.\fim  
\end{proof}

\begin{proposition}\label{2min1}
	Assume  $(\phi_0) - (\phi_3)$, $(M)$, $(f_1)$ and $(f_3)$ hold. Then, any strict local minimum of the functional $\Psi_1 = J_1 - \lambda J_2$ in the strong topology of $W_0^{1,\Phi}(\Omega)$ is so in the weak topology.
	
\end{proposition}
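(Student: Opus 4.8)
The plan is to argue by contradiction, the only genuine analytic input being a convergence lemma that upgrades weak convergence to strong convergence once the energies converge; everything else is soft topology built on the coercivity and weak lower semicontinuity already recorded for $\Psi_1$. Throughout I work with the equivalent norm $\|\nabla\cdot\|_\Phi$, write $\overline{B}_r=\{u:\|\nabla(u-u_0)\|_\Phi\le r\}$, and use that under $(\phi_0)$--$(\phi_2)$ the space $W_0^{1,\Phi}(\Omega)$ is reflexive and separable. Let $u_0$ be a strict local minimum of $\Psi_1$ in the strong topology and fix $\rho>0$ so small that $\Psi_1(u)>\Psi_1(u_0)$ for all $u\in\overline{B}_\rho$ with $u\neq u_0$. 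Setting $A:=\{u:\Psi_1(u)\le\Psi_1(u_0)\}\setminus\{u_0\}$, this choice means exactly that $A\cap\overline{B}_\rho=\emptyset$.

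The first step I would carry out is the convergence claim: if $u_n\rightharpoonup u_0$ and $\Psi_1(u_n)\to\Psi_1(u_0)$, then $u_n\to u_0$ strongly. Since $u_n\rightharpoonup u_0$ forces $J_2(u_n)\to J_2(u_0)$ by Lemma \ref{lllllll}-$v)$, we get $J_1(u_n)=\Psi_1(u_n)+\lambda J_2(u_n)\to J_1(u_0)$. As $J_1=\hat M\circ\mathcal{P}$ with $\hat M$ continuous and, by $(M)$, strictly increasing on $[0,\infty)$, and as $\{\mathcal{P}(u_n)\}$ is bounded, every subsequential limit of $\mathcal{P}(u_n)$ equals $\mathcal{P}(u_0)$, so $\mathcal{P}(u_n)\to\mathcal{P}(u_0)$. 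It then remains to pass from $u_n\rightharpoonup u_0$ together with modular convergence $\int_\Omega\Phi(|\nabla u_n|)\to\int_\Omega\Phi(|\nabla u_0|)$ to $u_n\to u_0$, and this is where I expect the main difficulty. Trying to trigger the $(S_+)$ property directly fails on the sign: convexity of $\mathcal{P}$ gives only $\langle\mathcal{P}'(u_n),u_n-u_0\rangle\ge\mathcal{P}(u_n)-\mathcal{P}(u_0)\to 0$, i.e. $\liminf_n\langle\mathcal{P}'(u_n),u_n-u_0\rangle\ge 0$, the wrong side of what $(S_+)$ requires. The clean remedy is the uniform convexity of $\Phi$ furnished by $(\phi_1)$: applying a Clarkson-type inequality to $\tfrac12(\nabla u_n+\nabla u_0)$, and noting that $\mathcal{P}(\tfrac{u_n+u_0}{2})\to\mathcal{P}(u_0)$ by convexity and weak lower semicontinuity of $\mathcal{P}$, forces $\int_\Omega\Phi(|\nabla u_n-\nabla u_0|)\to 0$, whence $u_n\to u_0$. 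Equivalently one invokes the Radon--Riesz/Kadec--Klee property of the uniformly convex reflexive space $W_0^{1,\Phi}(\Omega)$, or the matching modular-convergence lemma of \cite{santos1}.

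With the convergence claim established I would close the proof as follows. Suppose $u_0$ is \emph{not} a strict local minimum of $\Psi_1$ in the weak topology; then every weakly open neighbourhood of $u_0$ meets $A$, that is, $u_0$ lies in the weak closure of $A$. By the estimate (\ref{coer}) in the proof of Lemma \ref{I-coercive}, the functional $\Psi_1$ is coercive, so $\{u:\Psi_1(u)\le\Psi_1(u_0)\}$ is norm bounded and hence $A\subset\overline{B}_R$ for some $R>\rho$. On the bounded set $\overline{B}_R$ the weak topology is metrizable, so the fact that $u_0$ belongs to the weak closure of $A$ produces an actual \emph{sequence} $u_n\in A$ with $u_n\rightharpoonup u_0$. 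Each $u_n$ satisfies $\Psi_1(u_n)\le\Psi_1(u_0)$ and, because $A\cap\overline{B}_\rho=\emptyset$, also $\|\nabla(u_n-u_0)\|_\Phi>\rho$; combining $\Psi_1(u_n)\le\Psi_1(u_0)$ with the weak lower semicontinuity of $\Psi_1$ (Lemma \ref{lllllll}-$vii)$) yields $\Psi_1(u_n)\to\Psi_1(u_0)$. The convergence claim then gives $u_n\to u_0$ strongly, contradicting $\|\nabla(u_n-u_0)\|_\Phi>\rho$. Hence $u_0$ is a strict local minimum of $\Psi_1$ in the weak topology, as asserted.
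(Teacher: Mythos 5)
Your proof is correct. The paper itself disposes of the statement by reducing it to Theorem C of Ricceri \cite{MR2536320}: it notes that $W_0^{1,\Phi}(\Omega)$ is reflexive and separable, that $J_1$ and $J_2$ are sequentially weakly lower semicontinuous, that $\Psi_1$ is coercive by (\ref{coer}), and then verifies the one nontrivial hypothesis, namely $J_1\in\mathcal{W}_{W_0^{1,\Phi}}$ (if $u_n\rightharpoonup u$ and $\liminf J_1(u_n)\le J_1(u)$ then $u_n\to u$ along a subsequence), exactly as you do: convergence of $J_1$-values plus continuity and strict monotonicity of $\hat M$ give $\mathcal{P}(u_n)\to\mathcal{P}(u)$, and then uniform convexity via \cite[Theorem 2.4.11 and Lemma 2.4.17]{MR2790542} upgrades weak to strong convergence. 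What you do differently is to refuse the black box: your contradiction argument (coercivity bounds the sublevel set, metrizability of the weak topology on bounded sets of a separable reflexive space extracts a sequence $u_n\rightharpoonup u_0$ in $A$, weak lower semicontinuity forces $\Psi_1(u_n)\to\Psi_1(u_0)$, and the convergence claim then contradicts $\|\nabla(u_n-u_0)\|_\Phi>\rho$) is precisely the soft part of Ricceri's proof, specialized to $\Psi_1=J_1-\lambda J_2$. The paper's route is shorter and makes the structural hypothesis ($J_1\in\mathcal{W}_X$) explicit and reusable; yours is self-contained and makes visible exactly where each assumption enters, including the correct observation that the $(S_+)$ property cannot be triggered here because convexity gives the inequality with the wrong sign, so one must go through the Radon--Riesz/uniform-convexity mechanism instead. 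Both arguments rest on the same two pillars: weak continuity of $J_2$ (Lemma \ref{lllllll}-$v)$) and the Kadec--Klee-type property of the modular $\mathcal{P}$.
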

\begin{proof}
	We just need verify that, under these assumptions, the conditions of Theorem C in \cite{MR2536320} are fulfilled. Since  $W_0^{1,\Phi}(\Omega)$ is a reflexive and separable space, $J_1$ and $J_2$ are sequentially weakly lower semicontinuous and the functional $\Psi_1$ is coercive (see (\ref{coer})),  we just need to check that $J_1 \in \mathcal{W}_{W_0^{1,\Phi}}$, that is, 	
\begin{center}
	``if $u_n \rightharpoonup u$ and $\displaystyle\lim_{n \to \infty}\inf  J_1(u_n) \leq J_1(u)$, then $u_n \to u$ up to a subsequence''
\end{center}	
	to conclude the proof of the proposition,
	
In this direction, let us assume $ u_n\rightharpoonup u$ and $\displaystyle\lim_{n \to \infty}\inf  J_1(u_n) \leq J_1(u)$.  Since $J_1$ is sequentially weakly lower
	semicontinuous, we have $\displaystyle\lim_{n \to \infty}J_1(u_n) = J_1(u) $ for some subsequence, still denoted by $(u_n)$. Thus,  from this fact, continuity and monotonicity of $\hat{M}$ in $\mathbb{R}^+$, we obtain  $\displaystyle\lim_{n \to \infty}\mathcal{P}(u_n) = \mathcal{P}(u)$. Therefore, by the hypothesis $(\phi_1)$  we can apply \cite[Theorem 2.4.11 and Lemma 2.4.17]{MR2790542} to conclude that  $u_n \to u$ in $W_0^{1,\Phi}(\Omega)$. This ends the proof. \fim
\end{proof}

Below, let us connect the existence of solution to problem $(S)$ with existence of two local minima to the functional $I$. 
{\begin{lemma}\label{2min} Suppose $(S)$ admits a $W_0^{1,\Phi}(\Omega)$-solution, $(\phi_0)-(\phi_3),  (M)$ and $(f_1) - (f_3)$  hold. Then, for each $\lambda > \lambda^*$	
		 there exists $\mu_{\lambda} > 0$ such that for $\mu \in (0, \mu_\lambda]$ the functional $I$ has two local minima.	
\end{lemma}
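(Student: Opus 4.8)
The plan is to exhibit two local minima of $I$ by separating the space with a barrier produced by $(f_2)$ near the origin and a deep well produced by the condition $\lambda>\lambda^*$, and then minimizing $I$ on the two regions thus determined, exploiting that $I$ is coercive (Lemma \ref{I-coercive}) and sequentially weakly lower semicontinuous (Lemmas \ref{lllllll}-$vii)$ and \ref{llllema}). First I would use $(f_2)$ together with $(M)$ and the embedding $W_0^{1,\Phi}(\Omega)\hookrightarrow L^{\alpha\phi_+}(\Omega)$ to obtain, for $\epsilon>0$ small, a bound $F(x,t)\le \epsilon|t|^{\alpha\phi_+}+C_\epsilon|t|^{r}$ with $\alpha\phi_+<r$ subcritical, whence for $\|\nabla u\|_\Phi\le\rho\le1$ one gets $\Psi_1(u)\ge \frac{m_0}{\alpha}\|\nabla u\|_\Phi^{\alpha\phi_+}-\lambda\epsilon C\|\nabla u\|_\Phi^{\alpha\phi_+}-\lambda C_\epsilon C'\|\nabla u\|_\Phi^{r}$. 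Choosing $\epsilon$ small and then $\rho$ small yields constants $\rho\in(0,1)$ and $\sigma>0$ such that $\Psi_1(u)\ge0$ on the closed ball $\overline{B}_\rho:=\{\|\nabla u\|_\Phi\le\rho\}$ and $\Psi_1(u)\ge\sigma$ on the sphere $\{\|\nabla u\|_\Phi=\rho\}$. Since $\delta>1$ forces $\Psi_2\ge0$, the same inequalities hold for $I=\Psi_1+\mu\Psi_2$, for every $\mu>0$; in particular $I\ge0$ on $\overline{B}_\rho$ and $I\ge\sigma$ on its boundary sphere.

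Next I would build the deep well. As $\lambda>\lambda^*$, by \eqref{08} there is $w\in W_0^{1,\Phi}(\Omega)$ with $\int_\Omega F(x,w)dx>0$ and $\Psi_1(w)<0$; replacing $w$ by $w^+$ keeps $\Psi_1(w)<0$ (since $F$ is supported on $\{t>0\}$ and $\hat M$ is nondecreasing), so we may assume $w\ge0$. Let $u_0>0$ be the $(S)$-solution furnished by the hypothesis, so $u_0\in Dom(\Psi_2)$ and $\int_\Omega bu_0^{1-\delta}dx<\infty$. Because $1-\delta<0$, for every $\varepsilon>0$ one has $(w+\varepsilon u_0)^{1-\delta}\le(\varepsilon u_0)^{1-\delta}$, so $\widehat w:=w+\varepsilon u_0\in Dom(\Psi_2)$ with $\Psi_2(\widehat w)<\infty$; by continuity of $\Psi_1$ we fix $\varepsilon>0$ small so that $\Psi_1(\widehat w)<0$. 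Finally, choosing $\mu_\lambda>0$ small enough that $\mu\,\Psi_2(\widehat w)\le\tfrac12|\Psi_1(\widehat w)|$ for all $\mu\in(0,\mu_\lambda]$ gives $I(\widehat w)<0<\sigma$. Coercivity and weak lower semicontinuity then provide a global minimizer $u_1$ of $I$ on $W_0^{1,\Phi}(\Omega)$ with $I(u_1)\le I(\widehat w)<0$; since $I\ge0$ on $\overline{B}_\rho$, necessarily $\|\nabla u_1\|_\Phi>\rho$, so $u_1$ is our first (global, hence local) minimum and it lies outside the ball.

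For the second local minimum I would minimize $I$ over the weakly compact set $\overline{B}_\rho$: by weak lower semicontinuity the infimum is attained at some $u_2\in\overline{B}_\rho$. To place $u_2$ in the interior I produce an interior competitor of value below $\sigma$: choosing $r_0\in(0,\rho)$ so small that $\Psi_1(su_0)<\sigma/2$ for the $s=s(r_0)>0$ with $\|\nabla(su_0)\|_\Phi=r_0$ (possible because $\Psi_1(su_0)\to\Psi_1(0)=0$ as $s\to0$), and then shrinking $\mu_\lambda$ further so that $\mu\,\Psi_2(su_0)=\mu\,s^{1-\delta}\Psi_2(u_0)<\sigma/2$, we get $I(su_0)<\sigma$ with $\|\nabla(su_0)\|_\Phi=r_0<\rho$. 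Hence $\inf_{\overline{B}_\rho}I<\sigma\le\inf_{\{\|\nabla u\|_\Phi=\rho\}}I$, so $\|\nabla u_2\|_\Phi<\rho$ and $u_2$ is a local minimum of $I$ on the whole space; as $\|\nabla u_2\|_\Phi<\rho<\|\nabla u_1\|_\Phi$, it is distinct from $u_1$. Proposition \ref{2min1} is invoked to guarantee that these minima, being local minima in the strong topology, remain local minima in the weak topology, which is what the subsequent mountain-pass step in the proof of Theorem \ref{faraci} requires. The main obstacle is precisely the strongly-singular term: since $\Psi_2\equiv+\infty$ on $\{u\le0\}$ and blows up on small-amplitude functions, the origin is \emph{not} available as the ``small'' local minimum, and both the deep competitor $\widehat w$ and the interior competitor $su_0$ must be constructed inside $Dom(\Psi_2)$ out of the $(S)$-solution $u_0$, with $\mu$ chosen small enough to tame the nonnegative singular contribution on each of them.
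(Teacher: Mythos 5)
Your proposal has the same skeleton as the paper's proof: a barrier near the origin built from $(f_2)$, $(f_3)$, $(M)$ and the embedding into $L^{\alpha\phi_+}(\Omega)$; a negative level obtained from $\lambda>\lambda^*$; competitors placed inside $Dom(\Psi_2)$ by adding multiples of the $(S)$-solution; and $\mu_\lambda$ chosen small to tame $\mu\Psi_2$ on those competitors. Where you genuinely diverge is in how the two minima are separated. The paper only proves that $0$ is a \emph{strict local minimum of $\Psi_1$ in the strong topology}, upgrades this to the weak topology via Proposition \ref{2min1} (Ricceri), and then follows Faraci--Smyrlis to minimize $I$ over two disjoint sets whose relevant boundaries are \emph{weakly compact}, so that weak lower semicontinuity of $\Psi_1$ turns pointwise positivity into a strictly positive infimum there. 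You instead claim a \emph{uniform} ridge $\Psi_1\geq\sigma>0$ on the strong sphere $\{\|\nabla u\|_\Phi=\rho\}$ and locate one minimizer inside the ball and the global one outside. If that uniform $\sigma$ is available, your argument is correct and cleaner (Proposition \ref{2min1} becomes unnecessary), and the rest of your construction --- $\widehat w=w^++\varepsilon u_0$ with $(w^++\varepsilon u_0)^{1-\delta}\leq(\varepsilon u_0)^{1-\delta}$, and the interior competitor $su_0$ with $\Psi_2(su_0)=s^{1-\delta}\Psi_2(u_0)<\infty$ --- is sound.

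The one step you must justify is precisely that ridge. Your bound $F(x,t)\leq\epsilon|t|^{\alpha\phi_+}+C_\epsilon|t|^{r}$ with $r>\alpha\phi_+$ requires a continuous embedding $W_0^{1,\Phi}(\Omega)\hookrightarrow L^{r}(\Omega)$ for some exponent \emph{strictly above} $\alpha\phi_+$, and the hypotheses do not obviously supply one: $(f_3)$ caps the growth of $F$ at infinity below $t^{\alpha\phi_-}\leq t^{\alpha\phi_+}$, so the only source of the exponent $r$ is the crude bound $F\leq K\leq (K/m^r)t^r$ on the compact middle range $[m,M]$, valid for any $r$ but useless unless $L^r$ embeds with $r>\alpha\phi_+$ (the hypotheses $(M)$ and $(\phi_2)$ guarantee embeddings tied to $\alpha\phi_-$ and $\phi_-^*$, not to an exponent strictly above $\alpha\phi_+$ in general). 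Without such an $r$, one only gets $\Psi_1(u)>0$ pointwise for $0<\|\nabla u\|_\Phi<\epsilon'$, and in infinite dimensions the sphere is not compact, so $\inf_{\{\|\nabla u\|_\Phi=\rho\}}\Psi_1$ may well be $0$; then neither minimizer can be located in the interior. This is exactly the obstruction the paper's detour through the weak topology is designed to remove: on a weakly compact boundary not containing $0$, weak lower semicontinuity forces the infimum of $\Psi_1$ to be attained and hence positive. Either supply the extra embedding explicitly as a hypothesis you are using, or replace the sphere argument by the paper's weak-neighborhood argument.
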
}
	\nd\proof:
	Fix $\lambda > \lambda^*$, where $\lambda^*>0$ was defined at (\ref{08}). 
	Since $\Psi_1$ is lower semicontinuous and coercive (see  Lemma \ref{lllllll}-$vii)$ and (\ref{coer})), there exists a global minimum $u_0 \in W_0^{1,\Phi}(\Omega)$  of $\Psi_1$ in $W_0^{1,\Phi}(\Omega)$ and, in particular,   $\Psi_1(u_0) \leq \Psi_1(0) =0$. If  $\Psi_1(u_0)  =0$, we would have 
	$$J_1(u) - \lambda J_2(u) =\Psi_1(u) \geq \Psi_1(u_0)=0~\mbox{for all }  u \in W_0^{1,\Phi}(\Omega),$$
which would yield $\lambda^* \geq \lambda$, but this is impossible.	
	
	Let us denote by $C>0$ the best embedding constant of  $W_0^{1,\Phi}(\Omega) \hookrightarrow L^{\alpha\phi_+}(\Omega)$ and take $0<\epsilon < ({m_0 C^{\alpha\phi_+}})/{\lambda\alpha}$. Thus, it follows from the assumptions $(f_2)$ and $(f_3)$  that  $F(x,t) \leq \epsilon t^{\alpha\phi_+} $ for all $t \in (0,m) \cup (M, \infty)$ for some $m> 0$ small enough and $M> 0$  large enough. 
	
Besides this,  if  $\|\nabla u\|_{\Phi} < \epsilon'$, then we have
	$$m^{\alpha \phi_+}\Big|[m \leq u \leq M]\Big| \leq \Big(\displaystyle\int_{[m \leq u \leq M]}u^{\alpha\phi_+}dx\Big)^{1/\alpha\phi^+} \leq \|u\|_{\alpha\phi_+} \leq C\|\nabla u\|_\Phi \leq C\epsilon',$$
that is, $\Big|[m \leq u \leq M]\Big| \leq C\epsilon'/m^{\alpha \phi_+}. $  
	
So, it follows from the above information and 	assumption  $(f_1)$ that
	\begin{eqnarray*}
		\displaystyle\int_\Omega F(x,u)dx & =& \displaystyle\int_{[u < m]} F(x,u)dx + \displaystyle\int_{[u >M]} F(x,u)dx + \displaystyle\int_{[m \leq u \leq M]} F(x,u)dx \\
		& \leq & \epsilon\displaystyle\int_{\Omega \backslash [m \leq u \leq M]} u^{\alpha\phi_+} dx + \displaystyle\sup_{m \leq t \leq M}F(x,t)\frac{C\epsilon'}{m^{\alpha\phi_+}} \leq \epsilon\displaystyle\int_{\Omega } u^{\alpha\phi_+} dx
	\end{eqnarray*} 
for some $\epsilon' > 0$ small  enough, which shows  $J_2(u) \leq \epsilon\|u\|_{\alpha\phi_+}^{\alpha\phi^+}$ for all $u \in W_0^{1,\Phi}(\Omega)$ with $\|\nabla u\|_\Phi \leq \epsilon'$.	

Therefore, we obtain from this fact, hypothesis $(M)$ and Lemma 5.1 in (\ref{14}) that
	\begin{eqnarray*}
		\Psi_1(u) 
		&\geq&  \frac{m_0}{\alpha}\|\nabla u\|_{\Phi}^{\alpha\phi_+}-\lambda\epsilon\|u\|_{\alpha\phi_+}^{\alpha\phi_+}   
		\geq  \frac{m_0 C^{\alpha\phi_+}}{\alpha}\|u\|_{\alpha\phi_+}^{\alpha\phi_+}-\lambda\epsilon\|u\|_{\alpha\phi_+}^{\alpha\phi_+} > 0 = \Psi_1(0)
	\end{eqnarray*} 
holds, whenever $\|\nabla u\|_\Phi < \epsilon'$ with $\epsilon' > 0$  such above, that is, $0$ is a strict local minimum of $\Psi_1$ in the strong topology. Hence, we obtain from Proposition \ref{2min1}  that $0$ is a local strict minimum of $\Psi_1$ in the weak topology as well, i.e, 
	there exists a weak neighborhood $V_w$ of $0$ such that 
	$$0 = \Psi_1(0) < \Psi_1(u) ~~\mbox{for all} ~u \in V_w \setminus \{0\}.$$
	
After these information and the assumption that the problem $(S)$ admits a solution in $W_0^{1,\Phi}(\Omega)$, we are able to follow the same strategy of the proof of Theorem 1.1 in \cite{MR3530211} to build disjoint open sets  $D_1$ and $D_2$, in the  strong topology, such that $0\in D_1$, $u_0\in D_2$ and to find $\tilde{\omega}_i \in D_i$
 such that  $\tilde{\omega}_1$ and $\tilde{\omega}_2$ are distinct local minima of $I$. This ends the proof.\fim
 \medskip

By applying Corollary 2.1 of \cite{Marano2002} for functional of the type locally Lipschiz plus convex (it is a version of  Corollary 3.3 in \cite{MR837231} that considers functional of the type $C^1$ plus convex),  Lemma \ref{PS} and Lemma \ref{2min}, we have.
\begin{corollary}\label{3pc}
	Suppose $(\phi_0)-(\phi_3), (b), (M)$ and $(f_1) - (f_3)$  hold. In addition, assume  that Problem $(S)$ admits a $W_0^{1,\Phi}(\Omega)$-solution. Then, for each $\lambda > \lambda^*$	
		 there exists $\mu_{\lambda} > 0$ such that for $\mu \in (0, \mu_\lambda]$ the functional $I$ has three critical points, being two of them local minima and the other one a mountain pass point to the functional $I$.
\end{corollary}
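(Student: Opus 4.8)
The plan is to obtain the three critical points by a direct application of the abstract three-critical-point result for functionals of ``locally Lipschitz plus convex'' type, namely Corollary 2.1 in \cite{Marano2002} (the non-smooth counterpart of the Pucci--Serrin type Corollary 3.3 in \cite{MR837231}), once the two preceding lemmas have been put in place. First I would fix $\lambda > \lambda^*$ and let $\mu_\lambda > 0$ be the threshold furnished by Lemma \ref{2min}, so that for every $\mu \in (0,\mu_\lambda]$ the functional $I$ admits two distinct local minima $\tilde{\omega}_1, \tilde{\omega}_2 \in W_0^{1,\Phi}(\Omega)$. Simultaneously, Lemma \ref{PS} (which, like the present statement, presupposes that $(S)$ has a $W_0^{1,\Phi}(\Omega)$-solution) ensures that $I$ satisfies the $(PS)$ condition in the non-smooth sense of Section 2.

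Next I would verify that $I$ has the precise structural form demanded by the abstract theorem. By Lemma \ref{lllllll}-$vii)$ the functional $\Psi_1 = J_1 - \lambda J_2$ is locally Lipschitz on $W_0^{1,\Phi}(\Omega)$; and by Lemma \ref{llllema}, using that the solution of $(S)$ makes $Dom(\Psi_2) \neq \emptyset$, the functional $\Psi_2$ is proper, convex, sequentially weakly lower semicontinuous and never equal to $-\infty$. Hence $I = \Psi_1 + \mu\Psi_2$ is exactly of the type to which Corollary 2.1 of \cite{Marano2002} applies, and the notion of critical point appearing there coincides with the one defined in Section 2 through the inequality $\Psi_1^0(u;v-u) + \Psi_2(v) - \Psi_2(u) \geq 0$.

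With the two local minima acting as the two potential wells and $(PS)$ in hand, I would then invoke the abstract theorem to produce a third critical point $u_3$ of $I$ of mountain-pass type, characterized by the minimax level
$$
c = \inf_{\gamma \in \Gamma} \max_{t \in [0,1]} I(\gamma(t)) > \max\{I(\tilde{\omega}_1), I(\tilde{\omega}_2)\},
$$
where $\Gamma$ denotes the family of continuous paths in $W_0^{1,\Phi}(\Omega)$ joining $\tilde{\omega}_1$ to $\tilde{\omega}_2$. Because the value $c$ strictly exceeds the levels of both minima, the point $u_3$ is automatically distinct from $\tilde{\omega}_1$ and $\tilde{\omega}_2$, which yields the three asserted critical points.

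The main obstacle I anticipate is not analytic depth but the verification of the mountain-pass geometry that guarantees the strict separation of levels and hence the distinctness of the three points. This is where I would rely on the fact, recorded in the proof of Lemma \ref{2min}, that $\tilde{\omega}_1$ and $\tilde{\omega}_2$ are strict local minima sitting in disjoint open sets $D_1, D_2$: any connecting path must exit both wells, forcing $c > \max\{I(\tilde{\omega}_1), I(\tilde{\omega}_2)\}$. A secondary bookkeeping point is to confirm that a single threshold $\mu_\lambda$ can be taken to serve both in Lemma \ref{2min} and in the hypotheses of the abstract theorem, so that all conclusions hold uniformly on $(0,\mu_\lambda]$.
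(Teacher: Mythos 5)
Your proposal follows exactly the paper's route: the paper derives this corollary in one line by combining Lemma \ref{2min} (two local minima), Lemma \ref{PS} (the non-smooth $(PS)$ condition), and Corollary 2.1 of \cite{Marano2002} for functionals of locally Lipschitz plus convex type, with the structural hypotheses supplied by Lemma \ref{lllllll}-$vii)$ and Lemma \ref{llllema} just as you verify. Your additional remarks on the minimax characterization and the separation of levels are consistent elaborations of what the abstract theorem delivers, so the argument is correct and essentially identical to the paper's.
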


\section{Proof of Theorem \ref{med-u=a}}

Before starting the proof of Theorem \ref{med-u=a}, let us prove the two below Lemmas. 

\begin{lemma}[Multivalued solutions]\label{med-u=a1}
Assume $(\phi_0)-(\phi_3)$,  $(M)$, $(f_0)$,  $(f_1)$, $0<b \in L^1(\Omega)$ and $u \in W_0^{1,\Phi}(\Omega)$ be a critical point of $I$. Then:
\begin{enumerate}
\item[$(i)$] $u>0$ a.e. in $\Omega$ and there exist a $\eta \in \partial \Psi_2(u)$ and a $ \rho \in \left[f(x,u(x)-0, f(x,u(x)+0)\right] \subset  L^{\widetilde H}(\Omega)$  such that 
\begin{equation}
\label{2001}
\big(M\circ \mathcal{P}\big)(u)\int_{\Omega} a(|\nabla u|)\nabla u\nabla \varphi dx=\mu\langle \eta, \varphi \rangle  + \lambda \int_{\Omega} \rho \varphi dx~\mbox{for all } \varphi\in W_0^{1,\Phi}(\Omega),
\end{equation}
where $ \partial\Psi_2(u)$ stands for the subdifferential  of the convex functional $\Psi_2$ at $u$,
\item[$(ii)$] $bu^{-\delta}\varphi \in L^1(\Omega)$ for any $\varphi \in W_0^{1,\Phi}(\Omega)$. Besides this, 
$$\partial \Psi_2(u)=\{\eta\}~\mbox{and}~\langle \eta, \varphi\rangle =-\int_{\Omega} bu^{-\delta}\varphi dx ~\mbox{for all }\varphi \in W_0^{1,\Phi}(\Omega).$$
In particular, the equation $(\ref{2001})$ turns into
\begin{equation}
\label{14a}
\big(M\circ \mathcal{P}\big)(u)\int_{\Omega} a(|\nabla u|)\nabla u\nabla \varphi dx=  \int_{\Omega} \big[\mu bu^{-\delta} +\rho \big] \varphi  dx~\mbox{for all } \varphi\in W_0^{1,\Phi}(\Omega),
\end{equation}
\item[$(iii)$] there exists a $C>0$, dependent on $u$, such that $u(x) \geq Cd(x)$ for $x \in \Omega$, 
\item[$(iv)$] $ \rho+ bu^{-\delta}  \in L^2_{\mathrm{loc}}(\Omega) $  if in addition $b \in L^2_{\mathrm{loc}}(\Omega)$.
\end{enumerate}
	\end{lemma}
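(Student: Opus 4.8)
The plan is to exploit the splitting $I=\Psi_1+\mu\Psi_2$ with $\Psi_1$ locally Lipschitz and $\Psi_2$ convex, proper and lower semicontinuous (Lemmas~\ref{lllllll} and~\ref{llllema}), and to turn the variational inequality defining a critical point into a subdifferential inclusion. For $(i)$, note first that a critical point lies in $\mathrm{Dom}(\Psi_2)$, so $\Psi_2(u)=\int_\Omega G(x,u)\,dx<\infty$; since $G(x,t)=+\infty$ for $t\le 0$ and $G\ge0$, this forces $u>0$ a.e.\ in $\Omega$. Next I would observe that $v\mapsto\Psi_1^0(u;v-u)+\mu\Psi_2(v)$ is convex and is minimized at $v=u$ (this is precisely the critical point inequality, since $\Psi_1^0(u;0)=0$). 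Applying the Moreau--Rockafellar sum rule — legitimate because $v\mapsto\Psi_1^0(u;v-u)$ is finite and continuous — and using that the convex subdifferential at $u$ of $v\mapsto\Psi_1^0(u;v-u)$ is exactly the Clarke gradient $\partial\Psi_1(u)$, I obtain $0\in\partial\Psi_1(u)+\mu\,\partial\Psi_2(u)$. Because $J_1\in C^1$ and $\Psi_1=J_1-\lambda J_2$, Clarke calculus gives $\partial\Psi_1(u)=J_1'(u)-\lambda\,\partial J_2(u)$, and Lemma~\ref{lllllll}$(i)$--$(ii)$ represents $J_1'(u)$ by $M(\mathcal{P}(u))\int_\Omega a(|\nabla u|)\nabla u\nabla(\cdot)\,dx$ and an element of $\partial J_2(u)$ by some $\rho\in L^{\tilde{H}}(\Omega)$ with $\rho(x)\in[f(x,u-0),f(x,u+0)]$. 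This produces~(\ref{2001}).

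For $(ii)$ I would use the explicit form of $\Psi_2$. Fix the $\eta\in\partial\Psi_2(u)$ from $(i)$ and take $\varphi\ge0$, $t>0$; then $u+t\varphi\ge u>0$ and, since $G(x,\cdot)$ is nonincreasing, $\Psi_2(u+t\varphi)<\infty$. The subgradient inequality gives $t^{-1}\big(\Psi_2(u+t\varphi)-\Psi_2(u)\big)\ge\langle\eta,\varphi\rangle$, and as $t\downarrow0$ the convex, nonincreasing map $t\mapsto G(x,u+t\varphi)$ forces the difference quotients to decrease to $-b\,u^{-\delta}\varphi$; monotone convergence then yields $-\int_\Omega b\,u^{-\delta}\varphi\,dx\ge\langle\eta,\varphi\rangle>-\infty$. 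Hence $b\,u^{-\delta}\varphi\in L^1(\Omega)$ for $\varphi\ge0$, and for every $\varphi$ after splitting into positive and negative parts. With this integrability in hand, a dominated-convergence argument (majorant $2^{\delta}b\,u^{-\delta}|\varphi|\in L^1$) upgrades the one-sided limit to the two-sided G\^ateaux derivative $\varphi\mapsto-\int_\Omega b\,u^{-\delta}\varphi\,dx$; for a convex functional this makes $\partial\Psi_2(u)$ the singleton $\{\eta\}$ with this representation. Substituting back into~(\ref{2001}) gives~(\ref{14a}).

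For $(iii)$ I would read $u$ as a positive weak supersolution: from~(\ref{14a}), $b>0$, $\rho\ge0$ and the fact that $M(\mathcal{P}(u))$ is a strictly positive constant (by $(M)$, as $\mathcal{P}(u)>0$), one has $-\Delta_\Phi u\ge0$ with right-hand side bounded below by $\mu b\,u^{-\delta}>0$. A Hopf-type boundary estimate for the $\Phi$-Laplacian, e.g.\ comparing $u$ with $\varepsilon w$ where $w$ solves the torsion problem $-\Delta_\Phi w=1$ and is known to satisfy $w\ge c\,d$, then yields $u\ge C\,d$ with $C=C(u)>0$. Finally $(iv)$ is local regularity: on each $K\subset\subset\Omega$ item $(iii)$ gives $u\ge C\,d_K>0$, so $u^{-\delta}\in L^{\infty}(K)$ and $b\,u^{-\delta}\in L^2(K)$ whenever $b\in L^2_{\mathrm{loc}}(\Omega)$; local boundedness of $u$ (Moser iteration for the $\Phi$-Laplacian) together with $(f_1)$ bounds $\rho$ on $K$, whence $\rho+b\,u^{-\delta}\in L^2_{\mathrm{loc}}(\Omega)$.

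The step I expect to be the main obstacle is the exact identification of $\partial\Psi_2(u)$ in $(ii)$. The delicate point is the two-sided limit: admissible directions $u-t\varphi$ may leave $\mathrm{Dom}(\Psi_2)$ near $\partial\Omega$, where $u$ vanishes, so the integrability $b\,u^{-\delta}\varphi\in L^1$ must be secured \emph{before} passing to the limit, and with the strongly-singular exponent $\delta>1$ this $L^1$ control is exactly the crux. This is where the convexity and monotonicity of $G(x,\cdot)$ together with monotone convergence carry the argument, and it is also the reason the lower bound $(iii)$ is needed to legitimize the test-function manipulations.
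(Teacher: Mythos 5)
Your overall architecture matches the paper's: positivity from $u\in \mathrm{Dom}(\Psi_2)$, the inclusion $0\in J_1'(u)-\lambda\,\partial J_2(u)+\mu\,\partial\Psi_2(u)$ (the paper simply cites \cite[Proposition 2.183]{carl} where you re-derive it via Moreau--Rockafellar, which is fine), the one-sided monotone/Fatou limit to get $bu^{-\delta}\varphi\in L^1(\Omega)$ for $\varphi\ge0$, a supersolution comparison for $(iii)$, and the growth condition $(f_1)$ for $(iv)$. The problem is the step you yourself flag as the crux, and your proposed resolution of it does not work.

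Specifically, in $(ii)$ you claim that once $bu^{-\delta}|\varphi|\in L^1(\Omega)$ is known, dominated convergence with majorant $2^{\delta}bu^{-\delta}|\varphi|$ gives the \emph{two-sided} G\^ateaux derivative of $\Psi_2$ at $u$ in every direction $\varphi$, whence $\partial\Psi_2(u)$ is a singleton. This fails: for a general sign-changing $\varphi\in W_0^{1,\Phi}(\Omega)$ the set $\{u+t\varphi\le 0\}$ can have positive measure for \emph{every} $t>0$ (e.g.\ when $\varphi$ is negative and decays to the boundary more slowly than $u$), so $\Psi_2(u+t\varphi)=+\infty$ for all $t>0$ and the directional derivative in that direction is $+\infty$; moreover the pointwise bound $|G(x,u+t\varphi)-G(x,u)|/t\le 2^\delta b u^{-\delta}|\varphi|$ only holds where $u+t\varphi\ge u/2$, which is not a $t$-uniform condition. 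The paper circumvents exactly this obstruction with two extra ingredients you are missing: first it tests the subgradient inequality in the admissible direction $-\epsilon u$ (so that $u-\epsilon u=(1-\epsilon)u$ stays in the domain) to get $\int_\Omega bu^{1-\delta}dx\ge-\langle\eta,u\rangle$, which combined with the one-sided estimate at $\varphi=u$ yields the \emph{equality} $\langle\eta,u\rangle=-\int_\Omega bu^{1-\delta}dx$; second, for arbitrary $\varphi$ it tests with the truncation $(u+\epsilon\varphi)^+\ge0$, expands it as $u+\epsilon\varphi-(u+\epsilon\varphi)\chi_{[u+\epsilon\varphi\le0]}$, uses the sign information $-\langle\eta,u\chi_{[u+\epsilon\varphi\le0]}\rangle\ge0$ and $|[u+\epsilon\varphi\le0]|\to0$, and only then concludes $\langle\eta,\varphi\rangle=-\int_\Omega bu^{-\delta}\varphi\,dx$ for all $\varphi$. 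Without this truncation device your identification of $\eta$, and hence (\ref{14a}), is not established.

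Two smaller points. In $(iii)$, comparison with $\varepsilon w$ where $-\Delta_\Phi w=1$ requires a uniform positive lower bound on the forcing, which $\mu b u^{-\delta}$ does not provide since $b\in L^1(\Omega)$ may be arbitrarily small on sets of positive measure; the paper instead compares $u$ with the solution of the regularized problem $-\Delta_\Phi v=\tfrac{\mu}{c_0}\min\{1,b\}(v+1)^{-\delta}$, for which the bound $v\ge Cd$ is quoted from \cite{tmna}. In $(iv)$, the paper does not need local boundedness of $u$ via Moser iteration: it uses $|\rho|\le C(1+u^{h_+-1})$ from $(f_1)$ together with $W_0^{1,\Phi}(\Omega)\hookrightarrow L^{\phi^*_-}(\Omega)$ and $h_+\le\phi^*_-/2+1$, which gives $\rho\in L^2_{\mathrm{loc}}(\Omega)$ without any additional regularity theory.
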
 

\noindent\begin{proof} $of$ $(i)$. Since $u$ is a critical point of $I$ (see (\ref{211})), in particular, we have $u \in Dom(\Psi_2)$, which implies $\int_\Omega |G(x,u)|dx < \infty$, that is, $G(\cdot, u(\cdot))$ is finite a.e. in $\Omega$. Therefore, by the definition of $G$, we have $u > 0$ a.e in $\Omega$. 

Again, by $u\in W^{1,\Phi}_0(\Omega)$ be a critical point of $I$, it follows from \cite[Proposition 2.183]{carl}, that
\begin{equation*}
\label{18a}
0\in -\big(M\circ \mathcal{P}\big)(u)\Delta_\Phi u-\lambda\partial J_2(u)+\mu\partial \Psi_2(u),
\end{equation*}
	where $\partial J_2(u)$ stands for the generalized gradient  of the locally Lipschiz continuous functional $J_2$ at $u$. Thus,   there exist $\rho\in\partial J_2(u)$ and $\eta\in\partial \Psi_2(u)$ such that
\begin{equation}
\label{15a}
\langle -\big(M\circ \mathcal{P}\big)(u)\Delta_\Phi u,v\rangle=\lambda \langle\rho, v\rangle - \mu \langle\eta, v\rangle~\mbox{for all }v\in W^{1,\Phi}_0(\Omega).
\end{equation}	
	
So, it follows from Lemma \ref{lllllll}-$(ii)$ that there exists a unique $\rho \in L^{\tilde{H}}(\Omega)$, with $\rho \in \left[f(x,u(x)-0), f(x,u(x)+0)\right]$, such that  the equality (\ref{2001}) holds true. This ends the proof of $i)$.

\noindent Let us prove $ii)$. By (\ref{2001}) and $\rho \geq 0$, we have 
$$
\big(M\circ \mathcal{P}\big)(u)\int_{\Omega} a(|\nabla u|)\nabla u\nabla \varphi dx\geq -\mu\langle \eta, \varphi \rangle  ~\mbox{for all } 0\leq\varphi\in W_0^{1,\Phi}(\Omega),
$$
which implies, by definition of $\eta \in \partial \Psi_2(u)$, that
$$
\big(M\circ \mathcal{P}\big)(u)\int_{\Omega} a(|\nabla u|)\nabla u\nabla \varphi dx\geq \mu\displaystyle\int_{\Omega}\Big[\frac{G(x, u) - G(x,u+ t\varphi)}{t}\Big]dx   ~\mbox{for all } 0\leq\varphi\in W_0^{1,\Phi}(\Omega).
$$

Hence, by $u > 0$ a.e. in $\Omega$ and Fatou's lemma, we obtain 
\begin{eqnarray}
\label{16}
		\mu\displaystyle\int_\Omega b(x)u^{-\delta}\varphi dx &\leq &  \displaystyle\liminf_{t \to 0^+}\frac{1}{-\delta +1} \displaystyle\int_\Omega  {\mu}b(x) \left(\frac{\left(u + t\varphi\right)^{-\delta +1} - u^{-\delta +1}}{t}\right)dx \nonumber\\
		&\leq &  M\left(\mathcal{P}(u)\right)\displaystyle\int_\Omega a(|\nabla u|)\nabla u\nabla \varphi dx < \infty  ~\mbox{for all} ~0 \leq \varphi \in W_0^{1,\Phi}(\Omega), 
\end{eqnarray} 
that proves that $bu^{-\delta}\varphi \in L^1(\Omega)$ for any $\varphi \in W_0^{1,\Phi}(\Omega)$. To finish the proof of $ii)$, let  $\eta \in \partial \Psi_2(u)$. Then for $\epsilon \in (0,1)$, we have
	$$ \Psi_2\left(u - \epsilon u\right) - \Psi_2(u)
	\geq -\epsilon\langle\eta, u\rangle,  $$ which can be rewritten as 
	$$\frac{(1-\epsilon)^{-\delta +1} - 1}{(\delta -1)\epsilon}\displaystyle\int_\Omega b(x)u^{-\delta +1}dx \geq - \langle\eta, u\rangle. $$
	So, by doing $\epsilon \to 0^+$ in the previous inequality, we obtain
	\begin{equation}\label{desc}
	\displaystyle\int_\Omega b(x)u^{-\delta +1}dx \geq - \langle\eta, u\rangle.
	\end{equation}
	
	On the other hand, again by the fact that $\eta \in \partial\Psi_2(u)$, one has
	\begin{eqnarray*}\langle\eta, \varphi\rangle &\leq& \frac{\Psi_2\left(u + \epsilon \varphi\right) - \Psi_2(u)}{\epsilon} \\ &=& \frac{1}{-\delta +1}\displaystyle\int_\Omega b(x)\left(\frac{u^{-\delta+1} - (u+\epsilon \varphi)^{-\delta+1}}{\epsilon}\right)dx,\end{eqnarray*} 
 $\mbox{for all} ~0 \leq \varphi \in W_0^{1,\Phi}(\Omega)$ and $\epsilon>0$	given, which  yields
	\begin{eqnarray}\label{dvir}
	-\displaystyle\int_\Omega b(x)u^{-\delta} \varphi dx \geq \langle \eta , \varphi \rangle,
	\end{eqnarray}
after using Fatou's Lemma.

	By taking $\varphi = u$ in (\ref{dvir}) and combining this with (\ref{desc}), we obtain 
	\begin{equation}\label{equality}
	\langle \eta , u \rangle =  -\displaystyle\int_\Omega b(x)u^{-\delta+1}dx.
	\end{equation}
	
Besides this, by letting $\varphi \in W_0^{1,\Phi}(\Omega)$, testing (\ref{dvir}) with $(u+\epsilon\varphi)^+$ and using (\ref{equality}), we  get
	$$-\epsilon\displaystyle\int_\Omega b(x)u^{-\delta}\varphi dx \geq \epsilon\langle \eta, \varphi \rangle - \langle\eta, u\cdot \chi_{[u + \epsilon\varphi \leq 0]} \rangle - \epsilon \langle \eta, \varphi\cdot \chi_{[u + \epsilon\varphi \leq 0]}\rangle,$$
	which lead us to  
	\begin{equation}
	\label{20a}
	-\displaystyle\int_\Omega b(x)u^{-\delta}\varphi dx \geq \langle \eta, \varphi \rangle  -  \langle \eta, \varphi\cdot \chi_{[u + \epsilon\varphi \leq 0]}\rangle,
	\end{equation}
due (\ref{dvir}), that is,  $- \langle\eta, u\cdot \chi_{[u + \epsilon\varphi \leq 0]} \rangle \geq 0$.
	
	By using that $|[u + \epsilon\varphi \leq 0]| \to 0$ as $\epsilon \to 0$, the inequality (\ref{20a}) yields
	$$-\displaystyle\int_\Omega b(x)u^{-\delta}\varphi dx \geq \langle \eta, \varphi \rangle, ~\mbox{for all} ~\varphi \in W_0^{1,\Phi}(\Omega), $$ that is, 
	$$ \langle \eta, \varphi \rangle=-\displaystyle\int_\Omega b(x)u^{-\delta}\varphi dx , ~\mbox{for all} ~\varphi \in W_0^{1,\Phi}(\Omega). $$ 
This ends the proof of item $ii)$.

\noindent Now, we are ready to prove $iii)$. First, let us denote by $c_0 := M\left(\mathcal{P}(u)\right)>0$ and consider the problem
		\begin{equation}\label{sp}
		-\Delta_\Phi {v} =  \frac{\mu}{c_0}b_1(x)({v} +1)^{-\delta} ~~\mbox{in} ~\Omega, ~~~{v} = 0 ~~\mbox{on} ~\partial \Omega,
		\end{equation} where $b_1(x) = \min\{1, b(x)\}$. We know from Lemmas 4.2 and 5.1 in {\color{red}\cite{tmna}} that there exist a  unique solution of (\ref{sp}), say $\tilde{u}_1 \in W_0^{1,\Phi}(\Omega)$, and $C=C_u>0$ such that $\tilde{u}_1\geq Cd$ in $\Omega$. 
		
On the other hand,  we obtain from (\ref{16}) that
		$$\displaystyle\int_\Omega a(|\nabla u|)\nabla u\nabla \varphi dx \geq \displaystyle\int_\Omega  \frac{\mu}{c_0}b(x) u^{-\delta}\varphi dx \geq \displaystyle\int_\Omega  \frac{\mu}{c_0}b_1(x) (u + 1)^{-\delta}\varphi dx \mbox{ for all} ~0 \leq \varphi \in W_0^{1,\Phi}(\Omega),$$ that is, $u$ is a supersolution for the problem (\ref{sp}). Hence, 
		\begin{eqnarray*}
			0 &\leq &  \displaystyle\int_\Omega \left( a(|\nabla \tilde{u}_1|)\nabla \tilde{u}_1 - a(|\nabla u|)\nabla u \right)\nabla\left( \tilde{u}_1 - u \right)^+dx\\ & \leq & \frac{\mu}{c_0} \displaystyle\int_\Omega b_1(x)\left((\tilde{u}_1 + 1)^{-\delta} - (u + 1)^{-\delta}\right)\left( \tilde{u}_1 - u\right)^+dx \leq 0,\end{eqnarray*} which implies that $Cd\leq \tilde{u}_1 \leq u $ in $\Omega$ and this proves $(iii)$.
	
\noindent Let us prove $(iv)$. By  $(f_1)$ and property $\tilde{H}^{-1}\left(H(t) \right) \leq 2 \tilde{h}^{-1}(t) = 2 h(t)$  for all $t \in \mathbb{R}$ (the equality is due $h$ being continuous), we obtain 
\begin{align}
\label{c19}
|\rho| \leq a_2\widetilde{H}^{-1}\circ H(a_3(|u|+|\varphi|))+a_1 
\leq 2 a_2 h(a_3 \vert u \vert) + {a}_1 \leq C\left( 1 + u^{h_+ -1}\right), 
\end{align}
for some  $C>0$, where the last inequality is a consequence of $(\ref{aga})$ in $(f_1)$.  Hence, we obtain from (\ref{c19}),  $W_0^{1,\Phi}(\Omega)  \hookrightarrow L^{\phi^*_-}(\Omega)$ and $h_+ \leq {\phi_-^* }/{2} +1$  that $\rho \in L^2_{\mathrm{loc}}(\Omega)$. So, combining the fact that  $\rho \in L^2_{\mathrm{loc}}(\Omega)$ together with $(i)$ above,  the proof of $(iv)$ follows. This ends the proof of Lemma. \fim
\end{proof}	
	\medskip
\begin{lemma}[Almost everywhere solutions]\label{phi-delta}
Assume $(\phi_0)-(\phi_3)$,  $(M)$, $(f_0)$,  $(f_1)$	and $bd^{-\delta} \in L^{\tilde{H}}(\Omega)$. Let $u\in W^{1,\Phi}_0(\Omega)$ be a critical point of $I$ and $\rho  \in   L^{\widetilde H}(\Omega)$ as in Lemma $\ref{med-u=a1}$. Then:
\begin{enumerate}
\item[$(i)$] $-\big(M\circ \mathcal{P}\big)(u)\Delta_\Phi u\in \left(L^{H}(\Omega)\right)^{\prime}$,
\item[$(ii)$]  there exists a unique representative of $-\big(M\circ \mathcal{P}\big)(u)\Delta_\Phi u$  in $ L^{\widetilde H}(\Omega)$, still denoted by $-\big(M\circ \mathcal{P}\big)(u)\Delta_\Phi u$, such that
\begin{equation}
\label{2005}
-\big(M\circ \mathcal{P}\big)(u)\Delta_\Phi u=\lambda \rho +\mu {b}{u^{-\delta}}~\mbox{a.e. in } \Omega.
\end{equation}
\end{enumerate}
\end{lemma}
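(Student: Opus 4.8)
The plan is to read the distributional right-hand side of the equation off Lemma \ref{med-u=a1}, show that it lies in the Orlicz space $L^{\widetilde H}(\Omega)$, and then invoke Hölder's inequality together with the Riesz representation theorem for Orlicz spaces to identify the functional $-\big(M\circ \mathcal{P}\big)(u)\Delta_\Phi u$ with that right-hand side. The only genuinely analytic point is the membership $b u^{-\delta}\in L^{\widetilde H}(\Omega)$; the rest is a soft duality argument.

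First I would recall from Lemma \ref{med-u=a1} that, for the critical point $u$, the weak identity
\begin{equation*}
\big(M\circ \mathcal{P}\big)(u)\int_{\Omega} a(|\nabla u|)\nabla u\nabla \varphi\, dx = \int_{\Omega}\big(\lambda\rho + \mu b u^{-\delta}\big)\varphi\, dx, \quad \forall\, \varphi \in W_0^{1,\Phi}(\Omega),
\end{equation*}
holds with $\rho \in L^{\widetilde H}(\Omega)$ and with the pointwise lower bound $u(x) \geq C d(x)$ a.e.\ in $\Omega$ for some $C > 0$. The decisive step is to upgrade $b u^{-\delta}$ from $L^1_{\mathrm{loc}}$ to $L^{\widetilde H}(\Omega)$. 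For this I would combine the lower bound with the standing hypothesis $b d^{-\delta} \in L^{\widetilde H}(\Omega)$: fixing $\beta>0$ with $\int_\Omega \widetilde H(b d^{-\delta}/\beta)\,dx < \infty$ and using $0 \leq b u^{-\delta} \leq C^{-\delta} b d^{-\delta}$ a.e.\ together with the monotonicity of $\widetilde H$, the choice $\alpha := C^{-\delta}\beta$ yields $\int_\Omega \widetilde H(b u^{-\delta}/\alpha)\,dx \leq \int_\Omega \widetilde H(b d^{-\delta}/\beta)\,dx < \infty$, so that $b u^{-\delta} \in L^{\widetilde H}(\Omega)$. Hence the full right-hand side $g := \lambda\rho + \mu b u^{-\delta}$ belongs to $L^{\widetilde H}(\Omega)$.

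With $g \in L^{\widetilde H}(\Omega)$ in hand, item $(i)$ follows from Hölder's inequality in Orlicz spaces: for every $\varphi \in W_0^{1,\Phi}(\Omega)$ one has $\big|\int_\Omega g\varphi\,dx\big| \leq 2\|g\|_{\widetilde H}\|\varphi\|_H$, so the functional $\varphi \mapsto \langle -\big(M\circ \mathcal{P}\big)(u)\Delta_\Phi u,\varphi\rangle$ is bounded for the $L^H$-norm. Because $H \prec\prec \Phi_*$ forces the embedding $W_0^{1,\Phi}(\Omega) \hookrightarrow L^H(\Omega)$ to be continuous and dense (the density $\overline{W_0^{1,\Phi}(\Omega)}^{L^H}=L^H(\Omega)$ being recorded in the proof of Lemma \ref{lllllll}), this functional extends uniquely to an element of $\big(L^H(\Omega)\big)'$, which proves $(i)$.

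For item $(ii)$ I would apply the Riesz representation theorem for Orlicz spaces, valid here since $H \in \Delta_2$ so that $\big(L^H(\Omega)\big)' = L^{\widetilde H}(\Omega)$: there is a unique $\tilde g \in L^{\widetilde H}(\Omega)$ with $\langle -\big(M\circ \mathcal{P}\big)(u)\Delta_\Phi u,\psi\rangle = \int_\Omega \tilde g\,\psi\,dx$ for all $\psi \in L^H(\Omega)$. Testing against $\varphi \in W_0^{1,\Phi}(\Omega)$ and comparing with the weak identity gives $\int_\Omega(\tilde g - g)\varphi\,dx = 0$ for every such $\varphi$; since $\tilde g - g \in L^{\widetilde H}(\Omega)$ and $W_0^{1,\Phi}(\Omega)$ is dense in $L^H(\Omega)$, this forces $\tilde g = g = \lambda\rho + \mu b u^{-\delta}$ a.e.\ in $\Omega$, which is exactly (\ref{2005}). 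The main obstacle is thus the $L^{\widetilde H}$-integrability of $b u^{-\delta}$, and it is precisely there that the boundary estimate $u \geq Cd$ from Lemma \ref{med-u=a1} and the compatibility assumption $b d^{-\delta} \in L^{\widetilde H}(\Omega)$ are indispensable; once this is secured, the passage from the distributional functional to an a.e.\ identity is a routine density-plus-duality argument.
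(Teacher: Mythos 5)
Your proposal is correct and follows essentially the same route as the paper: use the representation $\langle\eta,\varphi\rangle=-\int_\Omega bu^{-\delta}\varphi\,dx$ and the lower bound $u\geq Cd$ from Lemma \ref{med-u=a1} to get $bu^{-\delta}\leq C^{-\delta}bd^{-\delta}\in L^{\widetilde H}(\Omega)$, conclude via H\"older and the density $\overline{W_0^{1,\Phi}(\Omega)}^{\Vert\cdot\Vert_H}=L^H(\Omega)$ that the operator lies in $\left(L^{H}(\Omega)\right)^{\prime}$, and then identify it a.e.\ by the Riesz representation theorem for Orlicz spaces. Your write-up actually makes explicit the modular estimate behind the membership $bu^{-\delta}\in L^{\widetilde H}(\Omega)$, which the paper leaves implicit.
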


\noindent	\proof $of$ $i)$ We have from (\ref{15a}) that
	$$\langle -\big(M\circ \mathcal{P}\big)(u)\Delta_\Phi u,\varphi\rangle=\lambda \langle\rho, \varphi\rangle - \mu \langle\eta, \varphi\rangle~\mbox{for all }\varphi\in W^{1,\Phi}_0(\Omega),$$
where $\eta\in\partial \Psi_2(u)\subset \left(W_0^{1,\Phi}(\Omega)\right)^{\prime}$ and $\rho\in\partial J_2(u) \subset \left(L^{H}(\Omega)\right)^{\prime}$ with this last inclusion due to the Lemma \ref{lllllll}-$(ii)$. Since  $bd^{-\delta} \in L^{\tilde{H}}(\Omega)$, we obtain from  Lemma \ref{med-u=a1}-$(ii)$ and $(iii)$ that $\eta \in  \left(L^{H}(\Omega)\right)^{\prime}$ as well.   Thus, we obtain from these information and $\overline{W_0^{1,\Phi}(\Omega)}^{\Vert \cdot \Vert_H}=L^H(\Omega)$ that $-\big(M\circ \mathcal{P}\big)(u)\Delta_\Phi u \in \left(L^{H}(\Omega)\right)^{\prime}$. This proves $i)$.
	
\noindent Let us prove $ii)$. It follows from item $i)$ and  Riesz Theorem for Orlicz spaces  that there exist a unique  element in $ L^{\widetilde H}(\Omega)$, still denoted by  $-\big(M\circ \mathcal{P}\big)(u)\Delta_\Phi u$, such that
\begin{eqnarray}\label{Riez-rep}
	\langle -M\circ \mathcal{P}(u)\Delta_\Phi u,\varphi\rangle=\int_\Omega \left(-M\circ \mathcal{P}(u)\Delta_\Phi u\right)\varphi dx~\mbox{for all }\varphi \in W^{1,\Phi}_0(\Omega),\nonumber
\end{eqnarray}
which implies by (\ref{14a}) that 
\begin{equation*}
\label{c22}
\int_\Omega\Big(-M\circ \mathcal{P}(u)\Delta_\Phi u-\lambda\rho -\mu bu^{-\delta}\Big)\varphi=0~\mbox{for all } \varphi\in W_0^{1,\Phi}(\Omega).
\end{equation*}
This ends the proof of Lemma. \fim
	\smallskip

\noindent {\bf Proof of Theorem \ref{med-u=a}-Conclusion.}  The proof of item $i)$ is inspired on ideas from \cite{ValdoJulio}, while for the proof of $ii)$ we borrow strategies from  \cite{Lou}. The item $(iii)$-$(v)$ are consequences of Lemmas \ref{med-u=a1} and \ref{phi-delta}.
\smallskip

	\noindent {\bf Proof of  $i)$:}  We just consider the case when $u$ is a local minimum for $I$. Similar arguments work when $u$ is a local maximum for $I$. In this case, it is readily  that
	\begin{align}\label{derivada}
	\lambda\int_\Omega\frac{F(x,u+\epsilon\varphi)-F(x,u)}{\epsilon}dx  -  \mu\int_\Omega \frac{G(x,u+\epsilon\varphi)-G(x,u)}{\epsilon}\varphi dx
	\leq \int_\Omega\frac{\hat M(\mathcal{P}(u+\epsilon\varphi))-\hat M(\mathcal{P}(u))}{\epsilon}dx
	\end{align}
holds for any $\varphi \in W_0^{1,\Phi}(\Omega)$ and any $\epsilon>0$ given.

Below, let us consider two cases. First, fix $0 \leq \varphi\in \mathbf{C}^\infty_0(\Omega)$.  So, we obtain from  Lebourg's theorem that  there exist  $t_0(x)\in(0,1)$ and 
$
	\xi_\epsilon\in \partial F(x,u+t_0\epsilon\varphi)
	$
such that
\begin{equation}
\label{20}
\frac{F(x,u+\epsilon\varphi)-F(x,u)}{\epsilon}=\xi_\epsilon\varphi,
\end{equation}		
for each $x \in \Omega$.

By using $(f_1)$, we are able to estimate $\xi_\epsilon$ by
	$$
	\begin{array}{lll}
	|\xi_\epsilon| \leq a_2\widetilde{H}^{-1}\circ H(a_3(|u|+|\varphi|))+a_1:=g,
	\end{array}	
	$$
where $g\in L^1(\Omega)$ is independent of $\epsilon>0$. Hence, coming back to (\ref{20}), we obtain
$$
	\left|\frac{F(x,u+\epsilon\varphi)-F(x,u)}{\epsilon}\right| \leq g\varphi\in L^1(\Omega)
	$$
	for every $\epsilon>0$ small enough.
	
Besides this, the right derivative of $F(x,\cdot)$ at $u $ is given by
	$$
	\lim_{\small{\epsilon\rightarrow 0^+}}\frac{F(x,u+\epsilon\varphi)-F(x,u)}{\epsilon}=f(x,u(x)+0) \varphi~\mbox{a.e.}~x\in\Omega,
	$$	
because $\varphi\geq 0$.
	
So, we are in position to apply Lebesgue's theorem, combined with Fatou's Lemma and Lemma \ref{phi-delta},  in (\ref{derivada}) to show that
\begin{equation*}
\label{extra}
\displaystyle\int_\Omega  \left(\lambda f(x,u(x)+0) +\mu\frac{b(x)}{u^\delta}\right)\varphi dx \leq  \displaystyle \big(M\circ \mathcal{P}\big)(u)\int_\Omega \phi(|\nabla u|)\nabla u\nabla \varphi dx 	=  \displaystyle \int_\Omega-\big(M\circ \mathcal{P}\big)(u)\Delta_\Phi u \varphi dx
\end{equation*}	
holds for any $0 \leq \varphi\in W^{1,\Phi}_0(\Omega)$, that is,
	$$
	-\big(M\circ \mathcal{P}\big)(u)\Delta_\Phi u(x)\geq \lambda f(x,u(x)+0) +\mu\frac{b(x)}{u^\delta}~\mbox{a.e.}~ x\in\Omega.
	$$
	
On the other hand, it follows from (\ref{2005}) and Lemma \ref{lllllll}-$(ii)$ that
$$-\big(M\circ \mathcal{P}\big)(u)\Delta_\Phi u(x)\leq \lambda f(x,u(x)+0) +\mu\frac{b(x)}{u^\delta}~\mbox{a.e.}~ x\in\Omega,$$
due to the fact that $u$ is a critical point	of $I$.

After these two inequalities, we obtain 
	\begin{equation}\label{e1}
	-\big(M\circ \mathcal{P}\big)(u)\Delta_\Phi u=\lambda f(x,u(x)+0) +\mu\frac{b(x)}{u^\delta}, ~\mbox{a.e.}~x\in\Omega.
	\end{equation}

Secondly, let us fix $\varphi\in \mathbf{C}^\infty_0(\Omega)$ with $\varphi \leq 0$.   By similar arguments as those done to prove the case $\varphi \geq 0$, we are able to show  that
	\begin{equation}\label{e2}
	-\big(M\circ \mathcal{P}\big)(u)\Delta_\Phi u=\lambda f(x,u(x)-0) +\mu\frac{b(x)}{u^\delta}~\mbox{a.e.}~x\in\Omega.
	\end{equation}
	holds. 
	
Finally, if  $meas\{x \in \Omega ~:~ u(x)=\tilde{a}\}>0$, then it would have from (\ref{e1})  and (\ref{e2}) that
	$$
	f(x,a-0)=f(x,\tilde{a}+0)~\mbox{a.e.}~x\in\{x \in \Omega ~:~ u(x)=\tilde{a}\},
	$$
but this is  impossible by $(f_0)$ so  $meas\{x \in \Omega ~:~ u(x)=\tilde{a}\}=0$. This ends the proof of $i)$.
\medskip
	
\noindent {\bf Proof of  $ii)$:} Since $u\in W^{1,\Phi}_0(\Omega)$ is a critical point of $I$, we obtain from Lemmas \ref{phi-delta}-$ii)$ that 
\begin{equation}
\label{201}
-\big(M\circ \mathcal{P}\big)(u)\Delta_\Phi u=\lambda \rho +\mu {b}{u^{-\delta}}:=h(x)~\mbox{a.e. in } \Omega.
\end{equation}
with $\rho \in [f(x,u-0),~~ f(x,u+0)]$. So, it follows from Lemma \ref{med-u=a1} and  \cite[Theorem 2.1]{Cianchi}, that 
\begin{equation*}
\label{c28}
a(|\nabla u|)|\nabla u|\in W^{1,2}_{loc}(\Omega).
\end{equation*}
	
Besides this, we have 	 
	$$
	\begin{array}{lll}
	\left|\displaystyle \nabla\left(\frac{a(|\nabla u|)|\nabla u|}{\epsilon+a(|\nabla u|)|\nabla u|}\right)\right| & = & \displaystyle \frac{\epsilon\big|\nabla(a(|\nabla u|)|\nabla u|)|}{[\epsilon+a(|\nabla u|)|\nabla u|]^2}  \leq  \displaystyle\frac{1}{\epsilon}|\nabla(a(|\nabla u|)|\nabla u|)|,
	\end{array}
	$$
which shows that 	
	$$\frac{a(|\nabla u|)|\nabla u|}{\epsilon+a(|\nabla u|)|\nabla u|}\in W^{1,2}_{loc}(\Omega)$$
	for each $\epsilon>0$ given and so 
	$$\frac{a(|\nabla u|)|\nabla u|}{\epsilon+a(|\nabla u|)|\nabla u|}\varphi\in W^{1,2}_{0}(\Omega)$$
	can be used as a test function for any $\epsilon>0$ and any $\varphi\in \mathbf{C}^\infty_0(\Omega)$ given.
	
By doing this, we get from (\ref{201})	 that
	\begin{equation}\label{int-reg-1}
	\begin{array}{lll}
	\displaystyle \int_\Omega h(x) \Big(\frac{a(|\nabla u|)|\nabla u|}{\epsilon+a(|\nabla u|)|\nabla u|}\varphi \Big)   & = & \displaystyle \big(M\circ \mathcal{P}\big)(u)\int_\Omega a(|\nabla u|)\nabla u \nabla\left(\frac{a(|\nabla u|)|\nabla u|}{\epsilon+a(|\nabla u|)|\nabla u|}\varphi\right)dx \\ \\
	& = & \displaystyle\big(M\circ \mathcal{P}\big)(u)\int_\Omega a(|\nabla u|) \frac{a(|\nabla u|)|\nabla u|}{\epsilon+a(|\nabla u|)|\nabla u|}\nabla u\nabla \varphi dx\\ \\
	& + & \displaystyle\big(M\circ \mathcal{P}\big)(u)\int_\Omega a(|\nabla u|)\varphi \frac{\epsilon}{[\epsilon+ a(|\nabla u|)|\nabla u|]^2}\nabla (a(|\nabla u|)|\nabla u|)\nabla u dx.
	\end{array}
	\end{equation}

Since,
$$	\begin{array}{lll}
\displaystyle\left|a(|\nabla u|)\varphi \frac{\epsilon}{[\epsilon+a(|\nabla u|)|\nabla u|]^2}\nabla (a(|\nabla u|)|\nabla u|)\nabla u\right|	& \leq &   \displaystyle|\varphi| \frac{\epsilon^2+(a(|\nabla u|)|\nabla u|)^2}{2[\epsilon+a(|\nabla u|)|\nabla u|]^2}|\nabla (a(|\nabla u|)|\nabla u|)| \\ \\
	& \leq & \displaystyle\frac{1}{2}|\varphi\nabla (a(|\nabla u|)|\nabla u|)|
	\end{array}
	$$
holds for any $\epsilon>0$, we are able to apply  Lebesgue Theorem  to the equalities in $(\ref{int-reg-1})$ to infer that
	\begin{eqnarray*}
	\displaystyle\int_{\Omega \setminus\left\{\nabla u\neq 0\right\}}h\varphi dx =  \displaystyle\big(M\circ \mathcal{P}\big)(u)\int_{\Omega \setminus \left\{\nabla u\neq 0\right\}} a(|\nabla u|)\nabla u \nabla \varphi 
	 =  \displaystyle\big(M\circ \mathcal{P}\big)(u)\int_{\Omega} a(|\nabla u|)\nabla u \nabla \varphi 
	 =  \displaystyle\int_{\Omega}h\varphi dx,
	\end{eqnarray*}
holds, which lead us to have $h(x)=0$ a.e. in $\{x\in \Omega~:~\nabla u=0\}$. As we already know from Lemma \ref{med-u=a1}-$(ii)$ that  $h(x)>0$ in $\Omega$, we obtain that 
	$meas \{x\in \Omega~:~\nabla u=0\}  =0$. So, it follows from 
Morey-Stampacchia's Theorem that $\{x\in \Omega~:~ u=c\}\subset \{x\in \Omega~:~\nabla u=0\}$ for any real constant $c$ given,
which shows that 	 $meas \{x\in \Omega~:~ u=c\}  =0.$ 

So, as a consequence of $(i)$ and $(ii)$ above, $\rho(x)=f(x,u(x))$ if $u(x)\neq  a$ and $\rho(x)\in [f(x, a-0),f(x, a+0)]$ if $u(x)=a$, we obtain that  $\rho(x) =f(x,u(x))$ a.e. in $\Omega$.  Finely, by applying Lemma \ref{med-u=a1}, we have $(iii)$ and $(iv)$, while Lemma \ref{phi-delta} implies $(v)$.
This ends the proof.\fim

\section{Proof of Theorem \ref{faraci}}

In this section, let us begin proving the equivalences among $(i)$, $(ii)$ and $(iii)$. To prove $(i\Longrightarrow ii)$, we borrow ideas from  Yijing \cite{MR3134198}, who treated this situation  in the context of homogeneous operators. The principal difficulty in doing this is to find appropriated assumptions under the N-function $\Phi$ to become possible to obtain compactness results for minimizing sequences on Nehari sets type, while the main obstacles to prove $(ii\Longrightarrow iii)$ were already got over in the last section. The $(iii\Longrightarrow i)$ is immediately. We will end this section ensuring that the discontinuity of the nonlinearity $f(x,\cdot)$ may be attained. 
\medskip

Let us begin by defining  the set 
	$$\mathcal{A}:= \Big\{ u \in W_0^{1,\Phi}(\Omega) ~: ~ \displaystyle\int_\Omega b(x)|u|^{1-\delta}dx < \infty\Big\}$$ and the subsets 
	$$\mathcal{N} :=\Big\{u \in W_0^{1, \Phi}(\Omega)  :  \displaystyle\int_\Omega \Big(a(|\nabla u|)|\nabla u|^2 -b(x)|u|^{1-\delta}\Big)dx \geq 0  \Big\}\subset \mathcal{A}$$
	and 
	$$ \mathcal{N}^* :=\Big\{u \in W_0^{1, \Phi}(\Omega) : \displaystyle\int_\Omega \Big(a(|\nabla u|)|\nabla u|^2 -b(x)|u|^{1-\delta}\Big)dx = 0  \Big\}.$$
\begin{lemma}
\label{l1}
Assume $(\phi_1)$ and $\mathcal{A} \neq \emptyset$. Then  $\mathcal{N}^*$ and $\mathcal{N}$ are non-empty sets and $\mathcal{N}$ is  unbounded set.
\end{lemma}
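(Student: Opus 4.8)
The plan is to reduce both membership conditions to the sign of a single real function obtained by the radial scaling $t\mapsto tu_0$ of a fixed element $u_0\in\mathcal{A}$. Since $\mathcal{A}\neq\emptyset$, fix such a $u_0$; because $b>0$ a.e. and $1-\delta<0$, finiteness of $\int_\Omega b|u_0|^{1-\delta}dx$ forces $u_0\neq 0$, so $\int_\Omega\Phi(|\nabla u_0|)dx>0$ and $Q_0:=\int_\Omega b|u_0|^{1-\delta}dx\in(0,\infty)$. Recalling $a(|s|)s=\phi(s)$, one has $a(t|\nabla u_0|)t^2|\nabla u_0|^2=\phi(t|\nabla u_0|)\,t|\nabla u_0|$, so I would define for $t>0$
\[
g(t):=\int_\Omega \phi(t|\nabla u_0|)\,t|\nabla u_0|\,dx-t^{1-\delta}Q_0,
\]
which is arranged precisely so that $tu_0\in\mathcal{N}$ iff $g(t)\geq 0$ and $tu_0\in\mathcal{N}^*$ iff $g(t)=0$.

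The heart of the argument is the asymptotic analysis of $P(t):=\int_\Omega\phi(t|\nabla u_0|)t|\nabla u_0|\,dx$. First I would record the standard consequences of $(\phi_1)$: with $\phi_-=a_-+1>1$ and $\phi_+=a_++1$ one has $\phi_-\Phi(s)\leq s\phi(s)\leq\phi_+\Phi(s)$ for $s>0$ together with the scaling bounds $\min\{t^{\phi_-},t^{\phi_+}\}\Phi(s)\leq\Phi(ts)\leq\max\{t^{\phi_-},t^{\phi_+}\}\Phi(s)$. For $t\geq 1$ these give $P(t)\geq\phi_-\int_\Omega\Phi(t|\nabla u_0|)dx\geq\phi_- t^{\phi_-}\int_\Omega\Phi(|\nabla u_0|)dx\to\infty$, while for $0<t\leq 1$ they give $P(t)\leq\phi_+ t^{\phi_-}\int_\Omega\Phi(|\nabla u_0|)dx\to 0$. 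Since $\delta>1$ yields $1-\delta<0$, the singular term satisfies $t^{1-\delta}Q_0\to 0$ as $t\to\infty$ and $t^{1-\delta}Q_0\to+\infty$ as $t\to 0^+$. Combining, I obtain $g(t)\to+\infty$ as $t\to\infty$ and $g(t)\to-\infty$ as $t\to 0^+$.

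From here the three conclusions follow quickly. Continuity of $g$ on $(0,\infty)$ comes from dominated convergence, the integrand being controlled on compact $t$-intervals by a fixed multiple of $\Phi(|\nabla u_0|)\in L^1(\Omega)$ through the same comparison bounds, while $t^{1-\delta}Q_0$ is plainly continuous. Because $g(t)\to+\infty$, there is $T>0$ with $g(t)>0$ for all $t\geq T$, so $tu_0\in\mathcal{N}$ for every such $t$; by absolute homogeneity of the Luxemburg norm, $\|\nabla(tu_0)\|_\Phi=t\|\nabla u_0\|_\Phi\to\infty$, hence $\mathcal{N}$ contains an unbounded ray and is in particular nonempty and unbounded. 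Finally, continuity of $g$ with $g(0^+)=-\infty$ and $g(+\infty)=+\infty$ lets the intermediate value theorem produce $t^*>0$ with $g(t^*)=0$, that is $t^*u_0\in\mathcal{N}^*$, giving $\mathcal{N}^*\neq\emptyset$.

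The main obstacle I anticipate is purely technical: rigorously justifying the two limiting regimes of $P(t)$ and the continuity of $g$ in the Orlicz setting, which rests entirely on the quantitative comparisons between $\phi$, $s\phi(s)$ and $\Phi$ extracted from $(\phi_1)$ and on exhibiting the integrable dominating function. Once these estimates are secured, the overall structure of the proof, namely scaling reduction combined with the intermediate value theorem, is elementary.
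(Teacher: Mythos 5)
Your proof is correct and follows essentially the same route as the paper: both fix $u_0\in\mathcal{A}$, study the ray $t\mapsto tu_0$, and use the $(\phi_1)$-estimates $\phi_-\Phi(s)\leq s\phi(s)\leq\phi_+\Phi(s)$ together with the scaling bounds on $\Phi(ts)$ to control the asymptotics. The only cosmetic difference is that the paper works with the fibering map $\sigma(t)=J(tu_0)$, notes $\sigma''>0$ and locates the (unique) zero of $\sigma'$, whereas you apply the intermediate value theorem directly to $g(t)=t\,\sigma'(t)$; since the lemma does not claim uniqueness, this changes nothing of substance.
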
	
\begin{proof} Take $u \in \mathcal{A}$.  So, it follows from $(\phi_1)$ and Lemma 5.1 in \cite{santos1}, that  
	\begin{equation}\label{abc}
	\displaystyle\int_\Omega \phi(t|\nabla u|)|\nabla u|dx \geq \frac{\phi_-}{t}\int_\Omega \Phi(t|\nabla u|)dx \geq \min\{t^{\phi_--1}, t^{\phi_+-1}\}\phi_-\displaystyle\int_\Omega \Phi(|\nabla u|)dx
	\end{equation} 
	and
	\begin{equation}\label{acd} \displaystyle\int_\Omega \phi(t|\nabla u|)|\nabla u|dx \leq \frac{\phi_+}{t}\int_\Omega \Phi(t|\nabla u|)dx \leq \max\{t^{\phi_--1}, t^{\phi_+-1}\}\phi_-\displaystyle\int_\Omega \Phi(|\nabla u|)dx \end{equation}
hold for $t>0 $ large enough.

So, we obtain from (\ref{abc}) and (\ref{acd}) that $\sigma'(t) \to \infty$ as $t \to \infty$ and $\sigma'(t) \to -\infty$ as $t \to 0^+$. Besides this, we have from $(\phi_1)$ again that $\sigma''(t) > 0$ for all $t> 0$, where $$\sigma(t) := J(tu) = \displaystyle\int_\Omega \Phi(t|\nabla u|)dx + \frac{t^{1-\delta}}{\delta -1}\displaystyle\int_\Omega b(x)|u|^{1-\delta}dx, ~t > 0$$
and so  there exists a  unique $t_* = t_*(u)$ (which is a global minimum of $\sigma$) such that $\sigma'(t_*) = 0$. This shows that $t_* u \in \mathcal{N}^*$. As another consequence of the above information, we have that $\sigma'(t) \geq 0$ for all $t > 0$ large enough, that is, $tu \in \mathcal{N}$ for all $t>0$ large enough. In particular, $\mathcal{N}$ is unbounded as well. This ends the proof.  \fim
\end{proof}

By using similar ideas as done Yijing \cite{MR3134198}  for the homogeneous case, we are able to prove the below Lemma in the context of non-local and non-homogeneous operator.
\begin{lemma}
\label{l2} Assume $(\phi_1)$ and $\mathcal{A} \neq \emptyset$. Then:
\begin{enumerate}
\item[$(i)$] the set $\mathcal{N}$ is strong closed,
\item[$(ii)$] $0$ is not an accumulation point of $\mathcal{N}$.
\end{enumerate}
\end{lemma}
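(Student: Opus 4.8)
The plan is to handle the two items separately, the point being that along a strongly convergent sequence the \emph{gradient part} of the defining functional is continuous, while the \emph{singular part} is only lower semicontinuous. Throughout I abbreviate $\mathcal{Q}(u):=\int_\Omega a(|\nabla u|)|\nabla u|^2\,dx$ and $\mathcal{S}(u):=\int_\Omega b(x)|u|^{1-\delta}\,dx\in[0,+\infty]$, so that $\mathcal{N}=\{u:\mathcal{Q}(u)\geq\mathcal{S}(u)\}$. The key structural observation is the identity $\mathcal{Q}(u)=\langle\mathcal{P}'(u),u\rangle$, which lets me transfer the $C^1$-regularity of $\mathcal{P}$ directly to $\mathcal{Q}$; moreover $(\phi_1)$ gives the pointwise bound $a(|\nabla u|)|\nabla u|^2=|\nabla u|\phi(|\nabla u|)\leq\phi_+\Phi(|\nabla u|)$, hence $\mathcal{Q}(u)\leq\phi_+\int_\Omega\Phi(|\nabla u|)\,dx<\infty$ for every $u\in W_0^{1,\Phi}(\Omega)$.

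For $(i)$ I take $u_n\in\mathcal{N}$ with $u_n\to u$ strongly in $W_0^{1,\Phi}(\Omega)$ and pass to a subsequence with $u_n\to u$ a.e.\ in $\Omega$. Since $\mathcal{P}\in C^1$, the operator $\mathcal{P}'$ is continuous, so
$$
|\langle\mathcal{P}'(u_n),u_n\rangle-\langle\mathcal{P}'(u),u\rangle|\leq\|\mathcal{P}'(u_n)-\mathcal{P}'(u)\|\,\|u_n\|+\|\mathcal{P}'(u)\|\,\|u_n-u\|\longrightarrow0,
$$
that is $\mathcal{Q}(u_n)\to\mathcal{Q}(u)$. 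On the other hand the nonnegative integrand $b|u_n|^{1-\delta}$ converges a.e.\ to $b|u|^{1-\delta}$, so Fatou's Lemma yields $\mathcal{S}(u)\leq\liminf_n\mathcal{S}(u_n)$. Combining these with the membership inequality $\mathcal{S}(u_n)\leq\mathcal{Q}(u_n)$ gives $\mathcal{S}(u)\leq\liminf_n\mathcal{S}(u_n)\leq\lim_n\mathcal{Q}(u_n)=\mathcal{Q}(u)<\infty$; in particular $u\in\mathcal{A}$ and $\mathcal{S}(u)\leq\mathcal{Q}(u)$, so $u\in\mathcal{N}$ and $\mathcal{N}$ is strongly closed.

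For $(ii)$ I argue by contradiction, assuming $u_n\in\mathcal{N}$ with $\|\nabla u_n\|_\Phi\to0$. By $(\phi_1)$ and Lemma~5.1 in \cite{santos1} one has $\mathcal{Q}(u_n)\leq\phi_+\int_\Omega\Phi(|\nabla u_n|)\,dx\leq\phi_+\|\nabla u_n\|_\Phi^{\phi_-}$ as soon as $\|\nabla u_n\|_\Phi\leq1$, whence $\mathcal{Q}(u_n)\to0$ and therefore $\mathcal{S}(u_n)\leq\mathcal{Q}(u_n)\to0$. It remains to contradict this by bounding $\mathcal{S}(u_n)$ below away from $0$. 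Since (by Poincar\'e in Orlicz spaces) $\|\nabla u_n\|_\Phi\to0$ forces $u_n\to0$ in $L^\Phi(\Omega)$, hence in measure, I fix any $\eta>0$ and use $1-\delta<0$: on $\{|u_n|\leq\eta\}$ we have $|u_n|^{1-\delta}\geq\eta^{1-\delta}$, so
$$
\mathcal{S}(u_n)\geq\eta^{1-\delta}\int_{\{|u_n|\leq\eta\}}b\,dx=\eta^{1-\delta}\Big(\|b\|_{L^1(\Omega)}-\int_{\{|u_n|>\eta\}}b\,dx\Big).
$$
Because the set $\{|u_n|>\eta\}$ has Lebesgue measure tending to $0$ and $b\in L^1(\Omega)$, absolute continuity of the integral gives $\int_{\{|u_n|>\eta\}}b\,dx\to0$, so for $n$ large the right-hand side is at least $\frac{1}{2}\eta^{1-\delta}\|b\|_{L^1(\Omega)}>0$, contradicting $\mathcal{S}(u_n)\to0$. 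Hence no such sequence exists and $0$ is not an accumulation point of $\mathcal{N}$.

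I expect the genuine obstacle to be exactly this lower bound in $(ii)$: a naive estimate of $\mathcal{S}(u)$ in terms of a norm of $u$ would demand $b$ in a space strictly better than $L^1$, which is not assumed here, so one cannot simply let a norm bound blow up. The argument must instead exploit the sign $1-\delta<0$ together with the fact that the small functions $u_n$ stay below $\eta$ on a set carrying almost all of the mass $\|b\|_{L^1(\Omega)}$; everything else in the proof reduces to the continuity of $\mathcal{P}'$ and to Fatou's Lemma.
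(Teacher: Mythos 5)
Your proof is correct. Note that the paper itself does not write out a proof of this lemma: it only remarks that it follows ``by similar ideas'' to Sun \cite{MR3134198}, so your argument fills in what the authors leave to a citation. Part $(i)$ is exactly the natural argument (continuity of $\mathcal{Q}=\langle\mathcal{P}'(\cdot),\cdot\rangle$ from the $C^1$-regularity of $\mathcal{P}$, Fatou for the singular term along an a.e.\ convergent subsequence), and your pointwise bound $a(t)t^{2}=t\phi(t)\le\phi_+\Phi(t)$ is the same inequality the paper uses in its proof of $(i)\Rightarrow(ii)$ of Theorem \ref{faraci}. The one place where you genuinely diverge from the source is the lower bound on $\mathcal{S}(u_n)$ in part $(ii)$: you exploit convergence in measure plus absolute continuity of $\int_E b$ to keep $\mathcal{S}(u_n)\ge\tfrac12\eta^{1-\delta}\|b\|_{L^1}$, whereas Sun (and the paper, in the displayed estimate of the proof of $(i)\Rightarrow(ii)$) uses the reverse H\"older inequality
$$\int_\Omega b|u_n|^{1-\delta}dx\ \ge\ \Big(\int_\Omega b^{1/\delta}dx\Big)^{\delta}\Big(\int_\Omega|u_n|dx\Big)^{1-\delta},$$
which blows up directly as $\|u_n\|_{L^1}\to0$ because $1-\delta<0$. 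The reverse-H\"older route is one line shorter and quantifies the radius of the excluded ball; your route is more elementary (no reverse H\"older needed) and, as you observe, makes transparent that only $b\in L^1(\Omega)$, $b>0$ is used. Either is acceptable; both rely on exactly the sign condition $1-\delta<0$ that you identify as the crux.
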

	
To complete our basics tools to prove  Theorem \ref{faraci}, let us prove the below lemma that is interesting itself. 
	\begin{lemma}\label{l33}
		Assume that $0<b\in L^1(\Omega)$, $(\phi_0)$ and $(M)$ hold. Let $ g:\Omega\times(0,\infty)\longrightarrow \mathbb{R}$ be a Carath\'eodory function such that 
		\begin{equation}
		\label{31}
		\big(g(x,s)-g(x,t)\big)(s-t)\leq 0~\mbox{for all }s,t>0.
		\end{equation}
		Then the problem \begin{equation}\label{auxiliar}
		\left\{
		\begin{array}{ll}
		\displaystyle-M\left(\int_\Omega\Phi(|\nabla u|)dx\right)\Delta_\Phi u=\lambda\frac{b(x)}{u^\delta}+g(x,u),& \mbox{in}~\Omega\\
		u>0~\mbox{in}~\Omega,~u=0~\mbox{on}~\partial \Omega
		\end{array}
		\right.
		\end{equation}
		has at most one solution in $ W_0^{1,\Phi}(\Omega)$.
	\end{lemma}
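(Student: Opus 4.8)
The plan is to prove uniqueness via a convexity/monotonicity argument adapted to the quasilinear and non-local setting. Suppose $u_1, u_2 \in W_0^{1,\Phi}(\Omega)$ are two solutions of $(\ref{auxiliar})$, and denote $c_i := M\left(\int_\Omega \Phi(|\nabla u_i|)dx\right) > 0$ for $i=1,2$. The natural device here is the hidden convexity trick: test the weak formulation of the equation for $u_1$ with the admissible function $\varphi_1 := (u_2^2 - u_1^2)/u_1$ and the equation for $u_2$ with $\varphi_2 := (u_1^2 - u_2^2)/u_2$, then add. First I would verify that these test functions indeed lie in $W_0^{1,\Phi}(\Omega)$ and that the singular terms $b u_i^{-\delta}\varphi_i$ are integrable; the lower bounds $u_i(x) \geq C d(x)$ furnished by Lemma $\ref{med-u=a1}$-$(iii)$ (or its analogue for this auxiliary problem) are what make $\varphi_i$ admissible and control the singularity.

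The key computation is the gradient term. After summing the two tested identities, the singular contributions combine into $\lambda\int_\Omega b(x)\big(u_1^{-\delta} - u_2^{-\delta}\big)\big(u_1^2 - u_2^2\big)\,\frac{1}{u_1 u_2}\cdots$-type expressions which, since $t \mapsto t^{-\delta}$ is decreasing, have a definite sign; similarly the $g$-terms are controlled by the monotonicity hypothesis $(\ref{31})$. The heart of the matter is the operator term: I would expand $\int_\Omega a(|\nabla u_i|)\nabla u_i \cdot \nabla\varphi_i\,dx$ and invoke the pointwise Díaz--Saá type inequality, namely that for the $\Phi$-Laplacian one has
\begin{equation*}
a(|\nabla u_1|)\nabla u_1 \cdot \nabla\!\Big(\tfrac{u_2^2-u_1^2}{u_1}\Big) + a(|\nabla u_2|)\nabla u_2 \cdot \nabla\!\Big(\tfrac{u_1^2-u_2^2}{u_2}\Big) \leq 0
\end{equation*}
pointwise, with equality forcing proportionality of the gradients. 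This is the standard consequence of the convexity of $t \mapsto \Phi(\sqrt{t})$, which in turn follows from $(\phi_1)$ since $a_- > 0$ guarantees $\phi_- > 1$.

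The genuinely new obstacle, and where I expect to spend the most care, is the non-local coefficient $M$: when $c_1 \neq c_2$ the two equations are scaled by different constants, so the clean cancellation of the classical Díaz--Saá argument breaks. The plan is to handle this by first comparing the Kirchhoff factors. One would combine the sign information from the singular and $g$-terms with the operator inequality to force both $c_1(\cdots) \leq 0$ and $c_2(\cdots) \leq 0$ and, using that each $c_i > 0$, conclude that the integrated gradient discrepancy must vanish. From equality in the pointwise inequality I would deduce $\nabla u_1 = \nabla u_2$ a.e., hence (by the Poincaré/Orlicz embedding and the boundary condition) $u_1 = u_2$; this simultaneously yields $c_1 = c_2$, closing the argument consistently. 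Throughout, the assumption $(M)$ ensuring $M \geq m_0 t^{\alpha-1} > 0$ for $t>0$ is what prevents the Kirchhoff factor from degenerating and lets me divide by $c_i$ safely.
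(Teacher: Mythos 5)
Your Díaz--Saá strategy runs into three concrete problems under the stated hypotheses, two of which are fatal. First, the sign of the $g$-term: after summing the two tested identities with $\varphi_1=(u_2^2-u_1^2)/u_1$ and $\varphi_2=(u_1^2-u_2^2)/u_2$, the contribution of $g$ is $\int_\Omega (u_2^2-u_1^2)\bigl(g(x,u_1)/u_1-g(x,u_2)/u_2\bigr)dx$, whose sign is governed by the monotonicity of $t\mapsto g(x,t)/t$, not of $t\mapsto g(x,t)$. Hypothesis (\ref{31}) gives only the latter, and the two are inequivalent once $g$ is allowed to be negative: take $g\equiv-1$, so that $g(t)/t=-1/t$ is \emph{increasing} and the $g$-term equals $-\int_\Omega (u_1+u_2)(u_1-u_2)^2/(u_1u_2)\,dx\le 0$, which can cancel the positive singular contribution and destroys the contradiction. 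Second, the pointwise inequality you invoke for the operator term is equivalent to convexity of $t\mapsto\Phi(\sqrt{t}\,)$, which requires $a_-\ge 1$, i.e.\ $\phi_-\ge 2$; hypothesis $(\phi_1)$ only gives $a_->0$, so for $1<\phi_-<2$ (e.g.\ $\Phi(t)=|t|^{3/2}$, where $\Phi(\sqrt{t}\,)=t^{3/4}$ is concave) the inequality simply fails, and changing the exponent $2$ to a smaller $q$ does not repair the $g$-term issue. Third, your treatment of the Kirchhoff mismatch is circular: the Díaz--Saá inequality controls the \emph{sum} $A_1+A_2$ of the two gradient integrals, whereas the tested equations produce $c_1A_1+c_2A_2$; when $c_1\ne c_2$ there is no sign information on the individual $A_i$, and ``forcing both $c_1(\cdots)\le0$ and $c_2(\cdots)\le0$'' is exactly the missing step, not a consequence of anything you have established.

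The paper's proof avoids all of this with a much shorter argument: under the (implicit) monotonicity of $M$, the functional $J_1(u)=\hat M\bigl(\int_\Omega\Phi(|\nabla u|)dx\bigr)$ is convex, hence its derivative is monotone, $\langle J_1'(u)-J_1'(v),u-v\rangle\ge0$. Testing the two equations with the plain difference $u-v$ and subtracting yields $\langle J_1'(u)-J_1'(v),u-v\rangle=\lambda\int_\Omega b\,(u^{-\delta}-v^{-\delta})(u-v)dx+\int_\Omega\bigl(g(x,u)-g(x,v)\bigr)(u-v)dx$; here (\ref{31}) is used in exactly the form it is stated, the singular term is strictly negative for $u\ne v$ because $t\mapsto t^{-\delta}$ is strictly decreasing, and no quotient test functions, two-sided boundary estimates for admissibility, or hidden-convexity inequalities are needed. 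To salvage your route you would have to strengthen (\ref{31}) to monotonicity of $g(x,t)/t$, assume $\phi_-\ge2$, and still resolve the $c_1\ne c_2$ obstruction; the convexity-of-$J_1$ argument is the one that actually matches the hypotheses.
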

	\begin{proof} First, we note that the fact of $M$ being non-increasing implies that $\hat{M}$ is convex. With similar arguments together with the hypotheses ($\phi_0$), we show that $\Phi$ convex as well. These facts and the hypotheses $(M)$ lead us to infer that the functional 
		$$J_1 (u):=\hat{M}\left(\int_{\Omega}\Phi(|\nabla u|)dx\right),~u \in W_0^{1,\Phi}(\Omega)$$ 
		is convex as well. 
		
		Let $u,v\in W_0^{1,\Phi}(\Omega)$ be two different  solutions of the problem	(\ref{auxiliar}). So, it follows from (\ref{31}) and the convexity of $J_1$, that
		\begin{eqnarray*}
		0\leq \langle J_1'(u)-J'_1(v),u-v\rangle &= &\lambda\int_\Omega \left( \frac{b}{u^\delta}-\frac{b}{v^\delta}\right)(u-v)dx+\int_{\Omega}(g(x,u)-g(x,v))(u-v)dx\nonumber\\
		&\leq& \lambda\int_\Omega \left( \frac{b}{u^\delta}-\frac{b}{v^\delta}\right)(u-v)dx<0,
		\end{eqnarray*}
		where the last inequality follows from $b,\delta>0$. This is impossible and so the proof of Lemma {\ref{l33}} is done. \fim
	\end{proof}	
\medskip	

\noindent{\bf Proof of Theorem \ref{faraci}-Conclusion.} We begin proving  the first implication.

\noindent{\bf Proof of $i)\Longrightarrow ii)$.} First, we note that the assumption $i)$ implies that $\mathcal{A} \neq \emptyset$. So, it follows from Lemmas \ref{l1} and \ref{l2} that $\mathcal{N} $ is a nonempty complete metric space. Moreover, by  Lemmas  \ref{lllllll} $(vi)$, Lemma \ref{llllema} and the fact that 
$$J(u) \geq \min\{\Vert \nabla u \Vert_{\Phi}^{\phi_-}, \Vert \nabla u \Vert_{\Phi}^{\phi_+}\}$$ 
we have that $J $ is lower semicontinuous and bounded below. Thus, by the Ekeland Variational Principle there exists a minimizing sequence $(u_n) \subset \mathcal{N}$ to $J$ constrained to  ${\mathcal{N}}$ such that:
	\begin{itemize}
		\item[$i)$] $J(u_n) \leq \displaystyle\inf_{\mathcal{N}} J + \frac{1}{n}; $
		\item[$ii)$] $J(u_n) \leq J(w) + \frac{1}{n}\|\nabla(u_n - w)\|_\Phi, ~\forall w \in \mathcal{N}. $
	\end{itemize}
Besides this, we may assume 	$u_n(x) > 0$ a.e in $\Omega$, because 
 $J(|u_n|) = J(u_n)$ and if we assume that  $u_n = 0$ in a measurable set $\Omega_0 \subset \Omega$, with $|\Omega_0| > 0$, then we would have from $u_n \in \mathcal{N}$, $b(x) > 0$ a.e in $\Omega$ and reverse H\"older inequality that 
	$$\infty > \phi_+\displaystyle\int_\Omega \Phi(|\nabla u_n|) dx \geq \displaystyle\int_{\Omega_0} b(x)u_n^{1-\delta} \geq 
	\Big(\displaystyle\int_{\Omega_0} b(x)^{1/\delta}dx \Big)^{\delta}\Big(\displaystyle\int_{\Omega_0} |u_n|dx\Big)^{1-\delta} = \infty, $$
	which is an absurd. Thus, $u_n(x) > 0$ a.e in $\Omega.$
	
	Since $J(u_n) \to \displaystyle\inf_{\mathcal{N}} J \geq 0,$ we have $$\min\{\|\nabla u_n\|_\Phi^{\phi_-}, \|\nabla u_n\|_\Phi^{\phi_+}\} \leq \displaystyle\int_\Omega \Phi(|\nabla u_n|)dx \leq \epsilon + \displaystyle\inf_{\mathcal{N}} J $$ for all $n$ large enough, which implies that $(u_n)$ is bounded. As a consequence of this, we have that
	$$\left\{\begin{array}{l}
	u_n \rightharpoonup u_* ~\mbox{in} ~W_0^{1,\Phi}(\Omega); \\
	u_n \to u_* ~\mbox{strongly in}~L^{G}(\Omega) ~\mbox{for all N-function } ~G \prec\prec \Phi_*; \\
	u_n \to u_* ~\mbox{a.e in} ~\Omega
	\end{array}\right.$$
	for some $u_* \in W_0^{1,\Phi}(\Omega)$.
	
By standard arguments, we are able to show that $J(u_*) = \displaystyle\inf_{\mathcal{N}} J$, that is,
	\begin{equation}\label{Aaa}
	\displaystyle\int_\Omega \Phi(|\nabla u_n|)dx + \frac{1}{\delta -1}\displaystyle\int_\Omega b(x)|u_n|^{1-\delta}dx \stackrel{n \to \infty}{\longrightarrow} \displaystyle\int_\Omega \Phi(|\nabla u_*|)dx + \frac{1}{\delta -1}\displaystyle\int_\Omega b(x)|u_*|^{1-\delta}dx
	\end{equation}
	holds. So, as a consequence of (\ref{Aaa}),  Fatou's Lemma and Lemma \ref{lllllll}$-vi)$, we obtain 
\begin{equation}
\label{21}
\displaystyle\lim_{n \to \infty}\displaystyle\int_\Omega \Phi(|\nabla u_n|)dx = \displaystyle\int_\Omega \Phi(|\nabla u_*|)dx.
\end{equation}

Thus, it follows from the assumption  $(\phi_1)$, Theorem 2.4.11 and Lemma 2.4.17 in \cite{MR2790542} that $W_0^{1,\Phi}(\Omega)$ is uniformly convex. This together with the weak convergence and (\ref{21}), lead us to conclude that 
	$
		u_n \to u_* ~\mbox{in} ~W_0^{1,\Phi}(\Omega). 
$	
After this strong convergence, we are able to follow similar arguments as done in Yijing \cite{MR3134198} in the homogeneous case to prove that 	
	\begin{equation*}
	\label{mesma}
	\displaystyle \int_\Omega a(|\nabla u_*|)\nabla u_*\nabla \varphi dx \geq \displaystyle\int_\Omega b(x)u_*^{-\delta}\varphi dx 
	\end{equation*}
holds for any $0 \leq \varphi \in W_0^{1,\Phi}(\Omega)$	 given. Hence, it follows from the same arguments as used to prove Lemma \ref{med-u=a1} that $u_*$ is a $W_0^{1,\Phi}(\Omega)$-solution of $(S)$ such that $u_* \geq Cd$ for some $C>0$ independent of $u$.
	\smallskip
	
\noindent{\bf Proof of $ii)\Longrightarrow iii)$.}  By  Corollary \ref{3pc}, there exist  three critical points to functional $I$, being two of them local minima  and the other one a mountain pass point to energy functional $I$. So, by  Theorem \ref{med-u=a} we know that each one of these critical point is a solution for the problem $(Q_{\lambda,\mu})$ that satisfy  the qualitative properties claimed.
\smallskip
	
\noindent{\bf Proof of $iii)\Longrightarrow i)$.}	Let $0 <u_0 \in W_0^{1,\Phi}(\Omega)$ be a solution of $(Q_{\lambda,\mu})$. Then  $u_0 \in Dom(\Psi_2)$, that is, 
	$\int_\Omega bu_0^{1-\delta}dx < \infty$. These ends the proof of the equivalences. 
\smallskip

Below, let us prove the items  $iv)$ and $(v)$. We are going to prove $iv)$ first. Let  $u=u_a$ be a solution of problem $(Q_{\lambda, \mu})$. Assume by contradiction that $u\leq a$ a.e. in $\Omega$ for any $a>0$. So, it follows from  $f(x,t)=f(x)$ for all $0<t<1$ and a.e. $x\in \Omega$ that $u_a \in  W_0^{1,\Phi}(\Omega)$
	is a solution of 
	$$
	\left\{
	\begin{array}{l}
	\displaystyle-M\left(\int_\Omega\Phi(|\nabla u|)dx\right)\Delta_\Phi u=\lambda\frac{b(x)}{u^\delta}+f(x)~ \mbox{in}~\Omega,\\
	u>0~\mbox{in}~\Omega,~u=0~\mbox{on}~\partial \Omega,
	\end{array}
	\right.
	$$
	that is, $u_a$ is constant in $a>0$ by  Lemma \ref{l33}. 
	
	On the other hand, by taking $\beta> \delta>1$, we have that $u_a^\beta>0$ can be used as a test function  in $(Q_{\lambda,\mu})$ and this  yields the inequality
	$$
	\begin{array}{lll}
	\displaystyle\beta M\left(\int_\Omega\Phi(|\nabla u_a|)dx\right)\int_\Omega a(|\nabla u_a|)|\nabla u_a| u_a^{\beta-1}dx &=&\displaystyle\int_{\Omega}b u_a^{\beta-\delta}dx +\int_{\Omega}f(x,u_a)u_a^\beta\\
	&\leq& \displaystyle|b|_1a^{\beta-\delta} +C|\Omega|(1+\widetilde H^{-1}\circ H(a))a^\beta
	\end{array}
	$$
for any $a>0$	given.

	So, by doing $a>0$ small enough we get an absurd, because the first term of the above inequality is a positive number that does not depends on $a>0$. This ends the proof of this item.
	\medskip
	
	Finally, we are going to prove $v)$. Let $u_a$ be a solution of problem $(Q_{\lambda,\mu})$. Assume by contradiction that $u\leq a$ a.e. in $\Omega$ for any $a>0$ again. So, it follows that $u_a $ is a super solution to problem 
	\begin{equation}
	\label{34}
	\left\{
	\begin{array}{l}
	\displaystyle-M\left(\int_\Omega\Phi(|\nabla u|)dx\right)\Delta_\Phi u=\lambda {b(x)} ~\mbox{in}~\Omega,\\
	u>0~\mbox{in}~\Omega,~u=0~\mbox{on}~\partial \Omega,
	\end{array}
	\right.
	\end{equation}
	whenever $a<1$.
	
	On the other hand,  we are able to show that the associated-energy functional to Problem (\ref{34}) is coercive due the assumption $\ell \alpha>1$. So, by following standard arguments, we show that there exists a non-trivial $0 \leq  v \in  W_0^{1,\Phi}(\Omega)$ solution for the problem  (\ref{34}). That is, we have
	$$
	\left\{
	\begin{array}{l}
	\displaystyle-M\left(\int_\Omega\Phi(|\nabla u_a|)dx\right)\Delta_\Phi u_a\geq \displaystyle-M\left(\int_\Omega\Phi(|\nabla v|)dx\right)\Delta_\Phi v~\mbox{in}~\Omega,\\
	u=v=0~\mbox{on}~\partial \Omega.
	\end{array}
	\right.
	$$
	
	So, it follows from the hypotheses that $M$ is such that a Comparison Principle holds, that $u_a\geq u>0$ for all $0<a\leq 1$. This fact together with the contradiction assumption lead us to  have $0 \leq u\leq u_a \leq a$ for all $0<a\leq 1$, which is impossible for $a>0$ small enough, because $u$ is non-trivial.  This ends the proof of item $v)$ and the proof of Theorem  \ref{faraci}.
	\fim
	\medskip
	
\noindent{\bf Proof of Corollary \ref{est}:}
	By the implication $(i \Longrightarrow ii)$ in Theorem \ref{faraci}, it suffices to exhibit a $u_0 \in W_0^{1,\Phi}(\Omega)$ such that $\displaystyle\int_\Omega bu_0^{1-\delta}dx < \infty. $ Let us construct a such one.  First, we note that the regularity of the domain $\Omega$ implies that there exists an $\epsilon > 0$ sufficiently small such that $d \in C^2(\overline{\Omega}_{2\epsilon})$ and $|\nabla d(x)| = 1 $ in $\Omega_{2\epsilon}$, where $d(x):= dist(x, \partial\Omega)$ and $\Omega_{2\epsilon} = \{x \in \Omega ~ : ~ d(x) < 2\epsilon\}$.
	With these, define
	$$u_0(x) = \left\{
	\begin{array}{l}
	d(x)^\theta ~~\mbox{if} ~d(x) < \epsilon, \\
	\epsilon^{\theta} + \displaystyle\int_{\epsilon}^{d(x)} \theta \epsilon^{\theta -1} \Big(\frac{2\epsilon -t}{\epsilon}\Big)^{2/(\phi_- -1)}dt ~~\mbox{if} ~\epsilon \leq d(x) < 2\epsilon, \\
	\epsilon^{\theta} + \displaystyle\int_{\epsilon}^{2\epsilon} \theta \epsilon^{\theta -1} \Big(\frac{2\epsilon -t}{\epsilon}\Big)^{2/(\phi_- -1)}dt ~~\mbox{if} ~\epsilon \leq d(x) < 2\epsilon
	\end{array}\right.$$
for each $\epsilon > 0$ given, where $0 < \theta < 1 $ will be chosen later. 
	
	A simple calculation yields
	$$\nabla u_0(x) =\left\{
	\begin{array}{l}
	\theta d(x)^{\theta -1}\nabla d(x) ~~\mbox{if} ~d(x) < \epsilon ,\\
	\theta \epsilon^{\theta -1} \Big(\frac{2\epsilon -d(x)}{\epsilon}\Big)^{2/(\phi_- -1)}\nabla d(x) ~~\mbox{if} ~\epsilon \leq d(x) < 2\epsilon, \\
	0 ~~\mbox{if} ~\epsilon \leq d(x) < 2\epsilon,
	\end{array}\right.$$
which implies that  $u_0 \in W_0^{1,\Phi}(\Omega)$ if
	\begin{equation}\label{lazer}\displaystyle\int_{\Omega_\epsilon} \Phi(\theta d(x)^{\theta -1}|\nabla d(x)|) dx < \infty. \end{equation}
	
Since $\vert \nabla u \vert = 1$ in ${\Omega_\epsilon} $, we obtain from Lemma 5.1 in (\ref{14}) that	 
	$$\displaystyle\int_{\Omega_\epsilon} \Phi(\theta d(x)^{\theta -1}|\nabla d(x)|) dx = \displaystyle\int_{\Omega_\epsilon} \Phi(\theta d(x)^{\theta -1}) dx \stackrel{\theta < 1}{\leq} C\displaystyle\int_{\Omega_\epsilon} d(x)^{(\theta -1)\phi_+} dx $$ 
that lead us to show (\ref{lazer}) for $\theta$ such that  $(\theta -1)\phi_+ > -1$, due well-known result in \cite{MR1037213}. That is,  for such $\theta$, we have that $u_0 \in W_0^{1,\Phi}(\Omega)$.
	
To complete the exhibition, if $0<\theta<1$ is such that  ${\theta q(1-\delta)} > 1 -q$, we  have
 \begin{equation*}\label{gn}
	\displaystyle\int_{\Omega_\epsilon} b(x)d(x)^{\theta(1-\delta)}dx \leq \Big(\int_{\Omega}b(x)^qdx  \Big)^{1/q}\Big(\displaystyle\int_{\Omega_\epsilon}d(x)^{\theta(1-\delta)q'}dx < \infty\Big)^{1/q'}<\infty,
	\end{equation*}
because  $b \in L^q(\Omega)$ and  the result in \cite{MR1037213} again.
	
Finally, to occur (\ref{lazer}) and (\ref{gn}) simultaneously, we have to be able to choose a $0<\theta<1$ satisfying at same time $(\theta -1)\phi_+ > -1$ and  ${\theta q(1-\delta)} > 1 -q$.  We can do these by controlling the range of $\delta$. Since 
$$1 -\frac{1}{\phi_+} < \frac{q -1}{q(\delta -1)}~\mbox{if, and only if,}~0 < \delta < \frac{q(2\phi_+ -1) -\phi_+}{q(\phi_+ -1)},$$ 
we are able to pick a  $$\theta \in \Big(1 -\frac{1}{\phi_+}, \min\Big\{1, \frac{q -1}{q(\delta -1)}\Big\}\Big) \subset (0,1),$$
whenever $\delta$ range as above. This proves that  $u_0$, defined as above, satisfies the condition of item $i)$ in Theorem \ref{faraci}. This  finishes the proof. \fim

\end{document}